\newtheorem{theorem}{Theorem}[section]
\newtheorem{proposition}{Proposition}[section]
\newtheorem{definition}{Definition}[section]
\newtheorem{lemma}{Lemma}[section]
\newtheorem{corollary}{Corollary}[section]
\newtheorem{remark}{Remark}[section]
\newtheorem{conjecture}{Conjecture}[section]
 \numberwithin{equation}{section}
\numberwithin{figure}{section}
\DeclareMathOperator{\vol}{vol}
\newenvironment{proof}{
%\parskip
\noindent {\hspace*{0.7cm}\bf Proof.}\rm}
{\mbox{}\hfill\rule{0.5em}{0.809em}\par}
\begin{document}

\title{A unified combinatorial view beyond some spectral properties}

\author{Xiaofeng Gu\thanks{
Department of Computing and Mathematics, University of West Georgia, Carrollton, GA 30118, USA. \newline Email: {\tt xgu@westga.edu}
}\ \
and
Muhuo Liu\thanks{
Department of Mathematics and Research Center for Green Development of Agriculture, South China Agricultural University, Guangzhou, 510642, China.
Email: {\tt liumuhuo@163.com} (Corresponding author)}
}

\date{}
\maketitle
 
\begin{abstract}
Let $\beta>0$. Motivated by jumbled graphs defined by Thomason, the celebrated expander mixing lemma and Haemers's vertex separation inequality, we define that a graph $G$ with $n$ vertices is a weakly $(n,\beta)$-graph if $\frac{|X| |Y|}{(n-|X|)(n-|Y|)} \le \beta^2$ holds for every pair of disjoint proper subsets $X, Y$ of $V(G)$ with no edge between $X$ and $Y$, and it is an $(n,\beta)$-graph if in addition $X$ and $Y$ are not necessarily disjoint. Our main results include the following.

(i) For any weakly $(n,\beta)$-graph $G$, the matching number
$\alpha'(G)\ge \min\left\{\frac{1-\beta}{1+\beta},\, \frac{1}{2}\right\}\cdot (n-1).$
If in addition $G$ is a $(U, W)$-bipartite graph with $|W|\ge t|U|$ where $t\ge 1$, then $\alpha'(G)\ge \min\{t(1-2\beta^2),1\}\cdot |U|$.

(ii)
For any $(n,\beta)$-graph $G$, $\alpha'(G)\ge \min\left\{\frac{2-\beta}{2(1+\beta)},\, \frac{1}{2}\right\}\cdot (n-1).$ If in addition $G$ is a $(U, W)$-bipartite graph with $|W|\ge |U|$ and no isolated vertices, then $\alpha'(G)\ge \min\{1/\beta^{2},1\}\cdot |U|$.

(iii)
If $G$ is a weakly $(n,\beta)$-graph for $0<\beta\le 1/3$ or an $(n,\beta)$-graph for $0<\beta\le 1/2$, then $G$ has a fractional perfect matching. In addition, $G$ has a perfect matching when $n$ is even and $G$ is factor-critical when $n$ is odd.

(iv) For any connected $(n,\beta)$-graph $G$, the toughness $t(G)\ge \frac{1-\beta}{\beta}$. For any connected weakly $(n,\beta)$-graph $G$, $t(G)> \frac{5(1-\beta)}{11\beta}$ and if $n$ is large enough, then $t(G) >\left(\frac{1}{2}-\varepsilon\right)\frac{1-\beta}{\beta}$ for any $\varepsilon >0$.

The results imply many old and new results in spectral graph theory, including several new lower bounds on matching number, fractional matching number  and toughness from eigenvalues. In particular, we obtain a new lower bound on toughness via normalized Laplacian eigenvalues that extends a theorem originally conjectured by Brouwer from regular graphs to general graphs.
\end{abstract}

\small {\bf MSC 2020:} 05C50, 05C70, 05C72, 05C42, 05C45

\hspace{1.5pt}\small \noindent {\bf Key words:} expander mixing lemma, eigenvalue, matching, fractional matching, toughness, Hamilton cycle

\section{Introduction}
Throughout this paper, we only consider nonempty simple graphs and $G$ is a graph with $n$ vertices. Let $\delta$ and $\Delta$ be the minimum degree and maximum degree of $G$, respectively. For two vertex subsets $X$ and $Y$ of $G$, $e(X, Y)$ denotes the number of edges with one end in $X$ and the other one in $Y$ (edges with both ends in $X\cap Y$ are counted twice). %$e(X)$ denotes the number of edges with both ends in $X$, and thus $e(X, X) = 2 e(X)$.
We use $\lambda_i:=\lambda_i(G)$ to denote the $i$-th largest eigenvalue of the adjacency matrix of $G$. By the Perron-Frobenius Theorem, $\lambda_1$ is always positive and $|\lambda_i|\le \lambda_1$ for all $i\ge 2$. Let $\lambda = \max_{2\le i\le n} |\lambda_i|$, that is, the second largest absolute eigenvalue. %=\max\{|\lambda_2|, |\lambda_n|\}$.
For a $d$-regular graph, it is known that $\lambda_1=d$. A $d$-regular graph on $n$ vertices with the second largest absolute eigenvalue at most $\lambda$ is called an {\bf $(n, d, \lambda)$-graph}. The celebrated expander mixing lemma states that for any $(n, d, \lambda)$-graph and two vertex subsets $X$ and $Y$, $$ \left| e(X, Y) - \frac{d}{n} |X| |Y| \right| \le \lambda \sqrt{|X| |Y| \left(1-\frac{|X|}{n}\right)\left(1-\frac{|Y|}{n}\right)}.$$
This result is usually attributed to Alon and Chung~\cite{AlCh88}. It is also noticed that the idea appeared earlier with a different form in the PhD thesis~\cite{Haem79} of Haemers.

If $e(X, Y)=0$, the expander mixing lemma implies that for $(n, d, \lambda)$-graphs, then  we have
\begin{align}\label{Liu1e}
\frac{|X| |Y|}{(n-|X|)(n-|Y|)} \le \left(\frac{\lambda}{d}\right)^2.
\end{align}
 When $X,Y$ are disjoint vertex subsets with $e(X, Y)=0$, Haemers~\cite{Haem95} showed that
\begin{align}\label{Liu2e}
\frac{|X| |Y|}{(n-|X|)(n-|Y|)} \le \left(\frac{\mu_n - \mu_2}{\mu_n + \mu_2}\right)^2
\end{align}
for any graph $G$, where $\mu_i$ denotes the $i$-th smallest eigenvalue of the Laplacian matrix of $G$.

These inequalities are more like combinatorial properties rather than spectral properties. In fact, Thomason~\cite{Thom85, Thom87} introduced the definition of jumbled graphs by using the edge density of all subgraphs, which is the first quantitative definition of pseudorandom graphs.
We observe that even we ignore the edge distribution of graphs, the vertex separations also imply nice structural properties. This observation motivates the following definitions.
\begin{definition}
Let $\beta>0$.
\par\noindent
{\em(i)} A graph $G$ with $n$ vertices is a {\bf weakly $(n,\beta)$-graph} if
\begin{align}\label{Liu3e}\frac{|X| |Y|}{(n-|X|)(n-|Y|)} \le \beta^2\end{align}
holds for every pair of disjoint proper subsets $X, Y$ of $V(G)$ with no edge between $X$ and $Y$.
\par\noindent
{\em(ii)} It is an {\bf $(n,\beta)$-graph} if \eqref{Liu3e} holds for every pair of proper subsets $X, Y$ of $V(G)$ with no edge between $X$ and $Y$.
\end{definition}

By convention, the complete graph $K_n$ is an $(n,0)$-graph, which means that $K_n$ is an $(n,\beta)$-graph for every $\beta>0$.  Clearly any $(n,\beta)$-graph is also a weakly $(n,\beta)$-graph. Notice that if $\beta \ge 1$, every graph is a weakly $(n,\beta)$-graph trivially. Thus it is more interesting when $0<\beta < 1$. By~\eqref{Liu1e}, every $(n, d, \lambda)$-graph is an $(n,\lambda/d)$-graph, and by~\eqref{Liu2e}, every graph is a weakly $(n,\beta)$-graph where $\beta\ge \frac{\mu_n - \mu_2}{\mu_n + \mu_2}$. In the next section, we introduce more $(n,\beta)$-graphs and weakly $(n,\beta)$-graphs.

We prove two general separation results (Lemmas~\ref{22l} and~\ref{L23l}),  through which we will study several properties of (weakly) $(n,\beta)$-graphs, including matching, fractional matching, factor-critical graph, and toughness, etc. These results imply many old and new results in spectral graph theory.

The rest of this paper is organized as below.
In the next section, we list $(n,\beta)$-graphs and weakly $(n,\beta)$-graphs via eigenvalues of adjacency matrix, Laplacian matrix and normalized Laplacian matrix. We study matching number and toughness of (weakly) $(n,\beta)$-graphs in Sections~\ref{sect:matching} and~\ref{sect:toughness}, respectively. By applying these results for specific values of $\beta$, we obtain many results involving eigenvalues in each corresponding section. In Section~\ref{sect:bipart}, a bipartite analogue of $(n,\beta)$-graphs will be introduced.

\section{$(n,\beta)$-graphs and weakly $(n,\beta)$-graphs via eigenvalues}

In this section, we show that every graph is an $(n,\beta)$-graph and/or a weakly $(n,\beta)$-graph for some value $\beta$ involving eigenvalues. These are listed in Proposition~\ref{prop:ex}.

Recall that the {\bf Laplacian matrix} of a graph $G$ is the matrix $L=D-A$, where $D$ is the diagonal matrix of vertex degrees  and $A$ is the adjacency matrix of $G$; and $\mu_i:=\mu_i(G)$ denotes the $i$-th smallest eigenvalue of the Laplacian matrix of $G$ for $i=1,2,\ldots, n$. We have $0=\mu_1\le \mu_2\le \cdots \le \mu_n$. According to~\cite{Chun92}, the {\bf normalized Laplacian matrix $\mathcal{L}$} of $G$ is an $n\times n$ matrix such that the $(u,v)$-entry equals $-1/\sqrt{d(u)d(v)}$ if $u$ is adjacent to $v$, equals 1 if $u=v$ and $d(v)\neq 0$, and 0 otherwise.  %In other words, $\mathcal{L}=D^{-1/2}LD^{-1/2}$ with the convention $D^{-1}(v,v)=0$ for $d(v)=0$.
Let $\sigma_i:=\sigma_i(G)$ denote the $i$-th smallest eigenvalue of the normalized Laplacian matrix of $G$, and it is known that $0=\sigma_1\le \sigma_2\le \cdots \le\sigma_n\le 2$. Let $\sigma = \max_{2\le i\le n}\{|1-\sigma_i|\}$. More about normalized Laplacian matrix and its eigenvalues can be found in~\cite{Chun92, Chun04} by Chung.

\begin{proposition}\label{prop:ex}
We have the following.
\\{\em(i)} Every $(n, d, \lambda)$-graph is an $(n,\beta)$-graph, where $\beta\ge \lambda/d$.
\\{\em(ii)} Every graph is a weakly $(n,\beta)$-graph, where $\beta\ge \frac{\mu_n - \mu_2}{\mu_n + \mu_2}$.
\\{\em(iii)} If $2\delta \ge \mu_2 +\mu_n$ for a graph $G$, then $G$ is an $(n,\beta)$-graph, where $\beta \ge \frac{\mu_n - \mu_2}{\mu_n + \mu_2}$.
\\{\em(iv)} Every graph with no isolated vertices is an $(n,\beta)$-graph, where $ \beta\ge \sigma\Delta/\delta$.
\\{\em(v)} Every connected  graph is a weakly $(n,\beta)$-graph, where $ \beta\ge \frac{\sigma_n-\sigma_2}{\sigma_n+\sigma_2}\cdot \frac{\Delta}{\delta}$.
\end{proposition}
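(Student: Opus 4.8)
The plan is to handle the five parts through a single spectral template, with part~(i) essentially immediate and the remaining four reduced to one elementary inequality about bilinear forms. For part~(i) I would simply specialize the expander mixing lemma to the case $e(X,Y)=0$. Since that lemma is stated for arbitrary (not necessarily disjoint) vertex sets, setting the left-hand quantity $e(X,Y)$ to zero and squaring reproduces inequality~\eqref{Liu1e}, which is precisely the defining condition of an $(n,\lambda/d)$-graph; no disjointness is needed, so we obtain an $(n,\beta)$-graph and not merely a weakly $(n,\beta)$-graph.

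For parts~(ii)--(v) the common idea is to encode the hypothesis ``no edge between $X$ and $Y$'' as the vanishing (or controlled size) of a bilinear form $x^\top M y$, where $x=\mathbf 1_X$, $y=\mathbf 1_Y$, and $M$ is the Laplacian $L$ (parts~(ii),~(iii)) or the normalized adjacency matrix $\mathcal A=D^{-1/2}AD^{-1/2}=I-\mathcal L$ (parts~(iv),~(v)). The engine is the following elementary fact, which I would isolate first: if $\phi_1$ is the eigenvector of the trivial eigenvalue and $u,w\perp\phi_1$ lie in the span of the remaining eigenvectors, whose eigenvalues lie in $[a,b]$, then a shift by the midpoint gives
$$\left|u^\top\!\left(M-\tfrac{a+b}{2}I\right)w\right|\le \tfrac{b-a}{2}\,\|u\|\,\|w\|,$$
whereas if those remaining eigenvalues instead all have absolute value at most $\sigma$ then $|u^\top M w|\le\sigma\,\|u\|\,\|w\|$. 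Both follow in one line from Cauchy--Schwarz after expanding $u,w$ in the eigenbasis.

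For the Laplacian parts I would take $\phi_1\propto\mathbf 1$ and compute $x^\top L y=\sum_{v\in X\cap Y}d_v-e(X,Y)=\sum_{v\in X\cap Y}d_v$. Projecting $x,y$ onto $\mathbf 1^\perp$ (which carries the eigenvalues $\mu_2,\dots,\mu_n$), recording $\|\hat x\|^2=|X|(n-|X|)/n$ and $\hat x^\top\hat y=|X\cap Y|-|X||Y|/n$, and applying the shifted inequality with $a=\mu_2$, $b=\mu_n$ produces the target ratio after squaring and cancelling $|X||Y|$. For disjoint $X,Y$ (part~(ii)) the overlap term vanishes and no degree hypothesis is needed. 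For general $X,Y$ (part~(iii)) the term $\sum_{v\in X\cap Y}d_v\ge\delta\,|X\cap Y|$ is nonnegative, and the hypothesis $2\delta\ge\mu_2+\mu_n$ is exactly what forces the shifted form to remain nonnegative, so that the extra overlap contribution is absorbed rather than fought. Balancing this overlap term against the degree bound is the one place where the combinatorics and the spectral estimate must be matched carefully, and I expect it to be the main obstacle in the Laplacian case.

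For the normalized parts I would set $\tilde x=D^{1/2}x$, $\tilde y=D^{1/2}y$, so that $\tilde x^\top\mathcal A\tilde y=x^\top A y=e(X,Y)=0$ and $\phi_1\propto D^{1/2}\mathbf 1$, with all relevant quantities now governed by volumes $\vol(\cdot)$ rather than cardinalities (here $2m=\vol(V)$). Running the two variants of the inequality above yields
$$\frac{\vol(X)\vol(Y)}{(2m-\vol(X))(2m-\vol(Y))}\le\sigma^2\qquad\text{(part (iv))},$$
and the analogue with $\bigl(\tfrac{\sigma_n-\sigma_2}{\sigma_n+\sigma_2}\bigr)^2$ for part~(v). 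The remaining, and genuinely delicate, step is converting volumes back to cardinalities: one bounds each numerator volume above by $\Delta\,|\cdot|$ and each complementary volume $2m-\vol(S)=\vol(V\setminus S)$ below by $\delta\,(n-|S|)$, which introduces the factor $(\Delta/\delta)^2$ and explains the hypotheses of no isolated vertices in~(iv) and connectivity in~(v) (the latter guaranteeing $\sigma_2>0$). I anticipate this volume-to-cardinality passage, rather than the spectral estimate itself, to be the crux of the normalized-Laplacian parts.
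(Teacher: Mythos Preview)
Your plan is correct and matches the paper's approach in substance. The paper simply cites the relevant mixing-lemma statements (the expander mixing lemma for~(i), Haemers's separation inequality for~(ii), Chung's Laplacian and normalized-Laplacian discrepancy theorems for~(iii) and~(iv), and Butler's normalized analogue for~(v)) and then performs exactly the volume-to-cardinality conversion you describe; you are instead re-deriving those discrepancy statements from scratch via the shifted bilinear/Cauchy--Schwarz estimate, which is precisely how Chung and Butler prove them. So the arguments coincide, yours being self-contained where the paper invokes the literature.

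One slip to fix in your write-up: in the final conversion step for~(iv) and~(v) you have the directions reversed. From
\[
\vol(X)\vol(Y)\le \sigma^2\,\vol(\overline{X})\vol(\overline{Y})
\]
you need \emph{lower} bounds $\vol(X)\ge\delta\,|X|$, $\vol(Y)\ge\delta\,|Y|$ on the left and \emph{upper} bounds $\vol(\overline{X})\le\Delta\,(n-|X|)$, $\vol(\overline{Y})\le\Delta\,(n-|Y|)$ on the right to reach $\delta^2|X||Y|\le\sigma^2\Delta^2(n-|X|)(n-|Y|)$; you stated the opposite. The hypothesis ``no isolated vertices'' in~(iv) is there so that $\delta>0$ (and so that $\mathcal L$ is defined), not for the reason you suggest; connectivity in~(v) is inherited from Butler's hypothesis but your bilinear argument does not actually require $\sigma_2>0$---if $\sigma_2=0$ the bound becomes $\beta\ge\Delta/\delta\ge 1$, which is vacuously true.
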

\begin{proof} It is straightforward to see that
(i) follows from \eqref{Liu1e},  (ii) follows from \eqref{Liu2e}, (iii) follows from Corollary~\ref{cor:fcn}, (iv) follows from Corollary~\ref{cor:1t} and  (v) follows from Corollary~\ref{cor:2t}.\end{proof}   %the expander mixing lemma.
%Theorem~\ref{thm:Ha}.

\begin{remark}
%\textcolor{red}{If $G$ is not isomorphic to the complete graph $K_n$, then}
For any graph,
$\displaystyle \frac{\sigma_n-\sigma_2} {\sigma_n+\sigma_2}\le \sigma.$
\end{remark}
\begin{proof}
For complete graphs, it is known from~\cite{Chun92} that $\sigma_2=\sigma_n =n/(n-1)$, and thus $\displaystyle \frac{\sigma_n-\sigma_2} {\sigma_n+\sigma_2}\le \sigma$.
For noncomplete graphs, we have $\sigma_2\le 1 <\sigma_n$ by~\cite{Chun92}. Thus $\sigma_2 -1\le 0 <\sigma_n-1$ and so $\sigma=\max\{\sigma_n-1,1-\sigma_2\}$.

If $\sigma_n + \sigma_2\le 2$, that is, $\sigma_n -1\le 1-\sigma_2$, then $\sigma = 1-\sigma_2$. It follows that
$$\frac{\sigma_n-\sigma_2}{\sigma_n+\sigma_2} = 1-\frac{2\sigma_2}{\sigma_n+\sigma_2}\le 1 - \sigma_2 =\sigma.$$

If $\sigma_n + \sigma_2\ge 2$, that is, $\sigma_n -1\ge 1-\sigma_2$, then $\sigma = \sigma_n -1$. It follows that
$$\frac{\sigma_n-\sigma_2}{\sigma_n+\sigma_2} = \frac{2\sigma_n}{\sigma_n+\sigma_2}-1\le \sigma_n -1 =\sigma,$$
as desired.
\end{proof}

The following two theorems of Chung~\cite{Chun04} are generalizations of the celebrated expander mixing lemma to Laplacian eigenvalues and normalized Laplacian eigenvalues.
\begin{theorem}[Chung~\cite{Chun04}]\label{thm:fc2}
%Let $G$ be a graph on $n$ vertices with degree $d(v)$ for each vertex $v$.
Let $d'=(\mu_n +\mu_2)/2$. For any two subsets $X$ and $Y$ of $V(G)$,
$$\left| e(X, Y) - \frac{d'}{n} |X| |Y|  + d' |X\cap Y|  - \sum_{v\in X\cap Y} d_G(v) \right| \le \frac{\mu_n -\mu_2}{2} \sqrt{|X| |Y| \left(1- \frac{|X|}{n}\right) \left(1- \frac{|Y|}{n}\right)}.$$
\end{theorem}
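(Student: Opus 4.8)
The plan is to recognize the entire left-hand expression as a single quadratic form in the characteristic vectors of $X$ and $Y$, and then to bound that form by the operator norm of an appropriately shifted Laplacian. Write $\mathbf{1}_X,\mathbf{1}_Y\in\{0,1\}^n$ for the indicator vectors of $X$ and $Y$, let $J$ denote the all-ones matrix and $\mathbf{1}$ the all-ones vector. First I would record the two elementary identities $\mathbf{1}_X^{\top} A\,\mathbf{1}_Y=e(X,Y)$ (the paper's convention of double-counting edges inside $X\cap Y$ is exactly what makes this hold) and $\mathbf{1}_X^{\top} D\,\mathbf{1}_Y=\sum_{v\in X\cap Y}d_G(v)$. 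Since $A=D-L$, subtracting gives $\mathbf{1}_X^{\top} L\,\mathbf{1}_Y=\sum_{v\in X\cap Y}d_G(v)-e(X,Y)$, so all the combinatorial quantities appearing in the statement are encoded in the Laplacian quadratic form.

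Next I would introduce the shifted symmetric matrix $M:=L-d'\!\left(I-\tfrac1n J\right)$ with $d'=(\mu_2+\mu_n)/2$, the point being that $I-\tfrac1n J$ is the orthogonal projection onto $\mathbf{1}^{\perp}$. A direct computation then shows that $-\mathbf{1}_X^{\top} M\,\mathbf{1}_Y$ equals precisely the expression inside the absolute value, using $\mathbf{1}_X^{\top}\mathbf{1}_Y=|X\cap Y|$ and $\mathbf{1}_X^{\top} J\,\mathbf{1}_Y=|X|\,|Y|$. The decisive observation is spectral: because $L\mathbf{1}=0$ one has $M\mathbf{1}=0$, while on $\mathbf{1}^{\perp}$ the matrix $M$ acts as $L-d'I$, whose eigenvalues are $\mu_i-d'$ for $2\le i\le n$. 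Since $d'$ is the midpoint of $[\mu_2,\mu_n]$, every such eigenvalue has absolute value at most $(\mu_n-\mu_2)/2$, so the operator norm of $M$ restricted to $\mathbf{1}^{\perp}$ is exactly $(\mu_n-\mu_2)/2$.

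Finally I would split $\mathbf{1}_X=\tfrac{|X|}{n}\mathbf{1}+\mathbf{x}$ and $\mathbf{1}_Y=\tfrac{|Y|}{n}\mathbf{1}+\mathbf{y}$ with $\mathbf{x},\mathbf{y}\perp\mathbf{1}$. Because $M\mathbf{1}=0$ and $M$ is symmetric, every term involving $\mathbf{1}$ drops out and $\mathbf{1}_X^{\top} M\,\mathbf{1}_Y=\mathbf{x}^{\top} M\,\mathbf{y}$; Cauchy--Schwarz together with the norm bound above yields $|\mathbf{x}^{\top} M\,\mathbf{y}|\le \tfrac{\mu_n-\mu_2}{2}\,\|\mathbf{x}\|\,\|\mathbf{y}\|$. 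The Pythagorean identity gives $\|\mathbf{x}\|^2=|X|-|X|^2/n=|X|(1-|X|/n)$ and likewise for $\mathbf{y}$, which reproduces the square-root factor and completes the bound. The only genuinely delicate point is the choice of the shift: one must center the Laplacian at $d'=(\mu_2+\mu_n)/2$ so that the two extreme eigenvalue deviations balance and produce the sharp constant $(\mu_n-\mu_2)/2$. The remaining bookkeeping of the $X\cap Y$ and degree-sum corrections, which distinguish this Laplacian version from the regular expander mixing lemma, is routine once the matrix $M$ is written down correctly.
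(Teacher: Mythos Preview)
Your argument is correct. The expression inside the absolute value is indeed $-\mathbf{1}_X^{\top}M\,\mathbf{1}_Y$ for $M=L-d'(I-\tfrac1n J)$, the spectral analysis of $M$ on $\mathbf{1}^{\perp}$ is accurate, and the decomposition of the indicator vectors into their $\mathbf{1}$-components and $\mathbf{1}^{\perp}$-components yields the claimed bound via the operator norm and the norm computation $\|\mathbf{x}\|^2=|X|(1-|X|/n)$.

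Note, however, that the paper does not supply its own proof of this theorem: it is quoted as a result of Chung~\cite{Chun04} and used as a black box to derive Corollary~\ref{cor:fcn}. Your proof is the standard spectral argument for this Laplacian variant of the expander mixing lemma and is essentially the one Chung gives; there is nothing to compare against in the present paper.
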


\begin{corollary}\label{cor:fcn}
If $\delta\ge (\mu_n +\mu_2)/2$, then for any two subsets $X$ and $Y$ of $V(G)$ with no edge between $X$ and $Y$,
\begin{equation*}
\frac{|X| |Y|}{(n-|X|)(n-|Y|)} \le \left(\frac{\mu_n - \mu_2}{\mu_n + \mu_2}\right)^2.
\end{equation*}
\end{corollary}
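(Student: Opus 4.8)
The plan is to derive the corollary directly from Theorem~\ref{thm:fc2} by inserting the hypothesis $e(X,Y)=0$ and then exploiting the degree condition $\delta\ge (\mu_n+\mu_2)/2$. Writing $d'=(\mu_n+\mu_2)/2$ as in the theorem, I would apply Theorem~\ref{thm:fc2} to the given pair $X,Y$. Since there is no edge between $X$ and $Y$ we have $e(X,Y)=0$, so the quantity inside the absolute value on the left collapses to
$$Q:=-\frac{d'}{n}|X||Y|+d'|X\cap Y|-\sum_{v\in X\cap Y}d_G(v)=-\frac{d'}{n}|X||Y|+\sum_{v\in X\cap Y}\bigl(d'-d_G(v)\bigr).$$

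The key observation is that the degree-correction terms point in the favorable direction. Because $d_G(v)\ge\delta\ge d'$ for every vertex $v$, each summand $d'-d_G(v)$ is non-positive, whence $\sum_{v\in X\cap Y}(d'-d_G(v))\le 0$. It follows that $Q\le -\frac{d'}{n}|X||Y|\le 0$, and therefore $|Q|\ge \frac{d'}{n}|X||Y|$. This is exactly the step that allows the argument to accommodate a possibly overlapping pair $X,Y$ rather than only a disjoint pair; when $X\cap Y=\emptyset$ the correction terms vanish outright and the bound $|Q|\ge\frac{d'}{n}|X||Y|$ is immediate.

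Combining this lower bound on $|Q|$ with the upper bound furnished by Theorem~\ref{thm:fc2}, and substituting $d'=(\mu_n+\mu_2)/2$, yields
$$\frac{\mu_n+\mu_2}{2n}|X||Y|\le\frac{\mu_n-\mu_2}{2}\sqrt{|X||Y|\left(1-\frac{|X|}{n}\right)\left(1-\frac{|Y|}{n}\right)}.$$
It then only remains to rearrange. Assuming $|X|,|Y|>0$ (otherwise the asserted inequality is trivial), I would cancel the common factor $\tfrac12\sqrt{|X||Y|}$, rewrite $1-|X|/n=(n-|X|)/n$ and $1-|Y|/n=(n-|Y|)/n$, and square both sides to obtain
$$\frac{|X||Y|}{(n-|X|)(n-|Y|)}\le\left(\frac{\mu_n-\mu_2}{\mu_n+\mu_2}\right)^2,$$
as desired.

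I do not expect a serious obstacle: the entire content is the sign analysis of the $X\cap Y$ degree terms, which is precisely where the hypothesis $\delta\ge(\mu_n+\mu_2)/2$ is used, and the rest is routine algebraic simplification. The only care needed is for the degenerate cases, namely $|X||Y|=0$ (where the conclusion holds trivially) and $\mu_n=\mu_2$ (which, for a nonempty graph, would force $d'>\delta$ and so cannot occur under the hypothesis, keeping the final division by $\mu_n-\mu_2$ legitimate).
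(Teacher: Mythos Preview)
Your proposal is correct and follows essentially the same route as the paper: apply Theorem~\ref{thm:fc2} with $e(X,Y)=0$, use $\delta\ge d'$ to see that the $X\cap Y$ correction only makes the left side more negative (so its absolute value is at least $\tfrac{d'}{n}|X||Y|$), and then square and rearrange. Your write-up is in fact a bit more explicit about the sign analysis than the paper's; the only minor slip is the aside about ``division by $\mu_n-\mu_2$'', since no such division occurs in the argument.
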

\begin{proof}
If $\delta\ge (\mu_n +\mu_2)/2 =d'$, then $\sum_{v\in X\cap Y} d_G(v)\ge d' |X\cap Y|$.
By Theorem~\ref{thm:fc2},
$$\frac{d'}{n} |X| |Y|\le \left|0 - \frac{d'}{n} |X| |Y|  + d' |X\cap Y|  - \sum_{v\in X\cap Y} d_G(v) \right| \le \frac{\mu_n -\mu_2}{2} \sqrt{|X| |Y| \left(1- \frac{|X|}{n}\right) \left(1- \frac{|Y|}{n}\right)}.$$
It follows that
$$\left(\frac{d'}{n} |X| |Y|\right)^2\le \left(\frac{\mu_n -\mu_2}{2}\right)^2 |X| |Y| \left(1- \frac{|X|}{n}\right) \left(1- \frac{|Y|}{n}\right),$$
and so
$$(d')^2 |X| |Y|\le \left(\frac{\mu_n -\mu_2}{2}\right)^2 \left(n-|X|\right) \left(n- |Y|\right),$$
implying that
$$\frac{|X| |Y|}{(n-|X|)(n-|Y|)} \le \left(\frac{\mu_n -\mu_2}{2d'}\right)^2 = \left(\frac{\mu_n - \mu_2}{\mu_n + \mu_2}\right)^2,$$
as desired.\end{proof}

For a graph $G$ and a subset $X\subseteq V(G)$, let $\overline{X}=V(G)\setminus X$ and define the volume $\vol(X)$ as $$\vol(X)=\sum_{v\in X} d(v).$$
\begin{theorem}[Chung~\cite{Chun04}]\label{thm:fc}
For a graph $G$ and two subsets $X$ and $Y$ of $V(G)$,
$$\left| e(X, Y) - \frac{\vol(X)\vol(Y)}{\vol(G)} \right| \le \frac{\sigma \sqrt{\vol(X)\vol(\overline{X})\vol(Y)\vol(\overline{Y})}}{\vol(G)}.$$
\end{theorem}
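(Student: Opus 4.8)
The plan is to run the spectral quadratic-form argument that underlies every expander mixing lemma, now carried out with the normalized adjacency operator. Write $D$ for the diagonal degree matrix and $A$ for the adjacency matrix, and set $\hat{A}=D^{-1/2}AD^{-1/2}$; this is exactly $I-\mathcal{L}$, so $\hat{A}$ is symmetric with eigenvalues $1-\sigma_i$ and admits an orthonormal eigenbasis $\phi_1,\ldots,\phi_n$ satisfying $\hat{A}\phi_i=(1-\sigma_i)\phi_i$. I may assume $G$ has no isolated vertices, so $D$ is invertible; isolated vertices contribute nothing to $e(X,Y)$ or to any volume and can be discarded at the outset. The top eigenvalue is $1-\sigma_1=1$, with normalized eigenvector $\phi_1=D^{1/2}\mathbf{1}/\sqrt{\vol(G)}$, because $\hat{A}(D^{1/2}\mathbf{1})=D^{-1/2}A\mathbf{1}=D^{-1/2}D\mathbf{1}=D^{1/2}\mathbf{1}$ and $\|D^{1/2}\mathbf{1}\|^2=\vol(G)$.

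The first key step is to express the edge count as a quadratic form in $\hat{A}$. Letting $\mathbf{1}_X,\mathbf{1}_Y$ be the indicator vectors of $X,Y$ and putting $x=D^{1/2}\mathbf{1}_X$, $y=D^{1/2}\mathbf{1}_Y$, I have $e(X,Y)=\mathbf{1}_X^{\top}A\mathbf{1}_Y=x^{\top}\hat{A}y$, where the cross term is counted with exactly the paper's convention. Expanding $x=\sum_i a_i\phi_i$ and $y=\sum_i b_i\phi_i$ gives $e(X,Y)=\sum_{i=1}^n (1-\sigma_i)a_ib_i$. The $i=1$ term separates out the expected density: $a_1=\langle x,\phi_1\rangle=\vol(X)/\sqrt{\vol(G)}$ and $b_1=\vol(Y)/\sqrt{\vol(G)}$, so $(1-\sigma_1)a_1b_1=\vol(X)\vol(Y)/\vol(G)$. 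Hence $e(X,Y)-\vol(X)\vol(Y)/\vol(G)=\sum_{i=2}^n (1-\sigma_i)a_ib_i$, and the whole problem reduces to bounding this tail.

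For the bound I use $|1-\sigma_i|\le\sigma$ for $i\ge 2$ and then Cauchy--Schwarz to get $\bigl|\sum_{i\ge 2}(1-\sigma_i)a_ib_i\bigr|\le\sigma\sqrt{\sum_{i\ge 2}a_i^2}\sqrt{\sum_{i\ge 2}b_i^2}$. The crux, namely the step that produces the precise $\vol(\overline{X})$ and $\vol(\overline{Y})$ factors rather than a cruder bound, is the Parseval computation of these residual norms: $\sum_{i=1}^n a_i^2=\|x\|^2=\mathbf{1}_X^{\top}D\mathbf{1}_X=\vol(X)$, so $\sum_{i\ge 2}a_i^2=\vol(X)-a_1^2=\vol(X)-\vol(X)^2/\vol(G)=\vol(X)\vol(\overline{X})/\vol(G)$, and symmetrically for $y$. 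Substituting yields exactly $\sigma\sqrt{\vol(X)\vol(\overline{X})\vol(Y)\vol(\overline{Y})}/\vol(G)$, which finishes the proof.

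I expect no genuine obstacle in the linear algebra; the only points needing care are, first, correctly projecting out the $\phi_1$-component to obtain the $\vol(X)\vol(Y)/\vol(G)$ main term, and second, carrying out the residual-norm identity $\vol(X)-\vol(X)^2/\vol(G)=\vol(X)\vol(\overline{X})/\vol(G)$ so that the complement volumes actually appear. Settling for $\sum_{i\ge 2}a_i^2\le\|x\|^2=\vol(X)$ would give only the weaker bound $\sigma\sqrt{\vol(X)\vol(Y)}/\vol(G)$, so it is essential to subtract off the $\phi_1$-mass exactly. The isolated-vertex convention is the lone non-routine subtlety, and I would dispose of it by reducing to the subgraph induced on the vertices of positive degree before the argument begins.
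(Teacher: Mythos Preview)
The paper does not prove this theorem; it is quoted from Chung~\cite{Chun04} as a black box and then used to derive Corollary~\ref{cor:1t}. Your argument is correct and is precisely the standard proof (and the one Chung gives): write $e(X,Y)=x^{\top}\hat{A}y$ with $\hat{A}=I-\mathcal{L}$ and $x=D^{1/2}\mathbf{1}_X$, $y=D^{1/2}\mathbf{1}_Y$, split off the contribution of the top eigenvector $\phi_1=D^{1/2}\mathbf{1}/\sqrt{\vol(G)}$ to obtain the main term $\vol(X)\vol(Y)/\vol(G)$, and bound the remainder by $|1-\sigma_i|\le\sigma$ together with Cauchy--Schwarz and the Parseval identity $\sum_{i\ge 2}a_i^2=\vol(X)-\vol(X)^2/\vol(G)=\vol(X)\vol(\overline{X})/\vol(G)$.

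One inconsequential slip in your closing aside: the cruder estimate $\sum_{i\ge 2}a_i^2\le\vol(X)$ would give $\sigma\sqrt{\vol(X)\vol(Y)}$, not $\sigma\sqrt{\vol(X)\vol(Y)}/\vol(G)$; this does not touch the actual proof.
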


\begin{corollary}\label{cor:1t}
Suppose that $X$ and $Y$ are two subsets of $V(G)$ such that there is no edge between $X$ and $Y$. If $G$ contains no isolated vertices, then
\begin{equation*}%\label{eq:main}
\frac{|X| |Y|}{(n-|X|)(n-|Y|)} \le \left(\frac{\sigma\Delta}{\delta}\right)^2.
\end{equation*}
\end{corollary}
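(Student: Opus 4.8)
The plan is to apply Chung's normalized expander mixing lemma (Theorem~\ref{thm:fc}) directly, exploiting the hypothesis $e(X,Y)=0$ to kill the main term, and then convert the resulting inequality between volumes into the desired inequality between cardinalities using the elementary degree bounds $\delta\le d(v)\le \Delta$. First I would dispose of the degenerate cases: if $X$ or $Y$ is empty the left-hand side is $0$ and there is nothing to prove, so I may assume both are nonempty. Here the no-isolated-vertex hypothesis enters, since it forces $\delta\ge 1$ and hence $\vol(X),\vol(Y)>0$, which is exactly what is needed to divide safely later.

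With $e(X,Y)=0$, Theorem~\ref{thm:fc} immediately gives
$$\frac{\vol(X)\vol(Y)}{\vol(G)}\le \frac{\sigma\sqrt{\vol(X)\vol(\overline{X})\vol(Y)\vol(\overline{Y})}}{\vol(G)}.$$
Both sides are nonnegative, so after clearing the common factor $\vol(G)$ I would square the inequality and then cancel the positive common factor $\vol(X)\vol(Y)$ to obtain the clean volume estimate
$$\frac{\vol(X)\vol(Y)}{\vol(\overline{X})\vol(\overline{Y})}\le \sigma^2.$$
This is the structural heart of the argument, and it is entirely formal once $e(X,Y)=0$ has removed the subtracted term.

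The final step is the translation from volumes to vertex counts, and this is the only place requiring genuine care, since one must orient all four degree estimates so that they push the target ratio in a single direction. To bound $\tfrac{|X||Y|}{(n-|X|)(n-|Y|)}$ from above, I would bound the two numerator volumes from below and the two denominator volumes from above, namely $\vol(X)\ge \delta|X|$, $\vol(Y)\ge \delta|Y|$, $\vol(\overline{X})\le \Delta(n-|X|)$, and $\vol(\overline{Y})\le \Delta(n-|Y|)$. Substituting these into the displayed volume estimate yields
$$\frac{\delta^2|X||Y|}{\Delta^2(n-|X|)(n-|Y|)}\le \frac{\vol(X)\vol(Y)}{\vol(\overline{X})\vol(\overline{Y})}\le \sigma^2,$$
which rearranges at once to $\tfrac{|X||Y|}{(n-|X|)(n-|Y|)}\le (\sigma\Delta/\delta)^2$. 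I do not anticipate a serious obstacle: the proof is a short computation, and the two things to watch are merely the nonvanishing of the denominators (secured by $\delta\ge 1$) and the consistent direction of the four degree inequalities.
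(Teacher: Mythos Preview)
Your proof is correct and follows essentially the same route as the paper's own argument: apply Theorem~\ref{thm:fc} with $e(X,Y)=0$, square and cancel to obtain $\vol(X)\vol(Y)\le \sigma^2\vol(\overline{X})\vol(\overline{Y})$, and then translate volumes to cardinalities via $\delta|X|\le\vol(X)$, $\vol(\overline{X})\le\Delta(n-|X|)$, etc. Your handling of the degenerate empty cases and the positivity needed for cancellation is slightly more explicit than the paper's, but otherwise the two proofs coincide.
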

\begin{proof}
By Theorem~\ref{thm:fc},
$$\left| 0 - \frac{\vol(X)\vol(Y)}{\vol(G)} \right| \le \frac{\sigma \sqrt{\vol(X)\vol(\overline{X})\vol(Y)\vol(\overline{Y})}}{\vol(G)},$$
and so
$$\vol(X)\vol(Y) \le \sigma \sqrt{\vol(X)\vol(\overline{X})\vol(Y)\vol(\overline{Y})}.$$
It follows that
$$\vol(X)\vol(Y) \le \sigma^2 \vol(\overline{X})\vol(\overline{Y}).$$

Notice that $\delta|X|\le \vol(X)$, $\delta|Y|\le \vol(Y)$, $\vol(\overline{X})\le \Delta|\overline{X}|=\Delta(n-|X|)$ and $\vol(\overline{Y})\le \Delta|\overline{Y}|=\Delta(n-|Y|)$, we have
$$\delta^2 |X||Y|\le \sigma^2 \Delta^2 (n-|X|)(n-|Y|),$$
and thus it follows.
\end{proof}

\iffalse%%%%%%%%% delete starts
\begin{corollary}
Suppose that $X$ and $Y$ are two subsets of $V(G)$ such that there is no edge between $X$ and $Y$. If $\sigma_2+\sigma_n\le 2$, then
\begin{equation}
\frac{|X| |Y|}{(n-|X|)(n-|Y|)} \le \left(\frac{\Delta-\mu_2}{\delta}\right)^2.
\end{equation}
\end{corollary}
\begin{proof}Since $\sigma=1-\sigma_2$ and $\sigma_2\ge \frac{\mu_2}{\Delta}$ (see Theorem 4 \cite{Bu08}), the result follows from \eqref{eq:main} and Corollary \ref{cor:1t}.
\end{proof}
\fi%%%%%%%%%%%%%% delete ends

\begin{theorem}[Butler~\cite{Bu08}]\label{n2t}
Let $G$ be a connected graph on $n\ge 2$ vertices. If $X$ and $Y$ are two disjoint subsets of $V(G)$ such that there is no edge between $X$ and $Y$, then
\begin{equation*}
\frac{\vol(X)\vol(Y)}{\vol(\overline{X})\vol(\overline{Y})} \le \left(\frac{\sigma_n-\sigma_2}{\sigma_n+\sigma_2}\right)^2.
\end{equation*}
\end{theorem}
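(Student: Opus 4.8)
The plan is to run a two-test-vector spectral-projection argument on the normalized adjacency operator $M = I - \mathcal{L} = D^{-1/2}AD^{-1/2}$, whose eigenvalues are $1-\sigma_i$ and whose Perron eigenvector is $\psi = D^{1/2}\mathbf{1}/\sqrt{\vol(G)}$, with top eigenvalue $1 = 1-\sigma_1$. I would encode the sets by $f_X = D^{1/2}\chi_X$ and $f_Y = D^{1/2}\chi_Y$, where $\chi_X,\chi_Y$ are indicator vectors, so that $\|f_X\|^2 = \vol(X)$, $\|f_Y\|^2 = \vol(Y)$, and $\langle f_X,\psi\rangle = \vol(X)/\sqrt{\vol(G)}$, $\langle f_Y,\psi\rangle = \vol(Y)/\sqrt{\vol(G)}$. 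The disjointness of $X$ and $Y$ gives $\langle f_X, f_Y\rangle = 0$, and the absence of edges between them translates into $f_X^T M f_Y = \chi_X^T A \chi_Y = e(X,Y) = 0$.

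The crucial idea, which I expect to carry the whole argument, is to \emph{shift} $M$ by the optimal amount before estimating. Writing $\theta = (\sigma_n+\sigma_2)/2$ and $\rho = (\sigma_n-\sigma_2)/2$, set $M' = M - (1-\theta)I$. Then $M'\psi = \theta\psi$, while on $\psi^\perp$ the eigenvalues of $M'$ are exactly $\theta - \sigma_i$ for $i\ge 2$, all of which lie in $[-\rho,\rho]$; hence $M'$ has spectral norm at most $\rho$ on $\psi^\perp$. This symmetric centering is precisely what replaces the cruder bound $\sigma = \max_{i\ge 2}|1-\sigma_i|$ (the quantity driving Theorem~\ref{thm:fc}) by the sharper ratio $\rho/\theta$, in line with the Remark above. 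It is the only genuinely clever point; everything else is bookkeeping.

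I would then decompose $f_X = \alpha_X\psi + f_X^\perp$ and $f_Y = \alpha_Y\psi + f_Y^\perp$ with $\alpha_X = \vol(X)/\sqrt{\vol(G)}$ and $\alpha_Y = \vol(Y)/\sqrt{\vol(G)}$. Using $f_X^T f_Y = 0$ one gets $f_X^T M' f_Y = f_X^T M f_Y = 0$, and expanding through the decomposition (the two cross terms vanish because $M'\psi\parallel\psi$) yields $0 = \theta\alpha_X\alpha_Y + (f_X^\perp)^T M' f_Y^\perp$, so $\theta\alpha_X\alpha_Y = |(f_X^\perp)^T M' f_Y^\perp|$. Bounding the right-hand side by Cauchy--Schwarz and the norm estimate gives $\theta\alpha_X\alpha_Y \le \rho\,\|f_X^\perp\|\,\|f_Y^\perp\|$. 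Since $\|f_X^\perp\|^2 = \vol(X)\vol(\overline{X})/\vol(G)$ and $\|f_Y^\perp\|^2 = \vol(Y)\vol(\overline{Y})/\vol(G)$ by Pythagoras, substituting $\alpha_X\alpha_Y = \vol(X)\vol(Y)/\vol(G)$ and squaring gives $\theta^2\vol(X)\vol(Y) \le \rho^2\vol(\overline{X})\vol(\overline{Y})$, which is the claim after dividing by $\theta^2\vol(\overline{X})\vol(\overline{Y})$.

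A few routine caveats remain to be dispatched: if $X$ or $Y$ is empty the inequality is trivial; connectedness with $n\ge 2$ guarantees $\sigma_2 > 0$ (hence $\theta > 0$) and no isolated vertices, so every volume in a denominator is positive and all divisions are legitimate. An alternative route would be a Haemers-style eigenvalue interlacing applied to a quotient of $M'$ on the partition $X, Y, \overline{X\cup Y}$; I expect it to reach the same bound with more overhead, so I prefer the projection argument above.
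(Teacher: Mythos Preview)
The paper does not supply its own proof of this theorem: it is quoted from Butler's thesis~\cite{Bu08} and used as a black box to derive Corollary~\ref{cor:2t}. So there is nothing in the paper to compare your argument against line by line.

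That said, your proof is correct and is precisely the standard argument one expects for this kind of statement (and is, in spirit, the same as Butler's). The key steps---encoding $X$ and $Y$ by $D^{1/2}\chi_X$ and $D^{1/2}\chi_Y$, shifting $M=I-\mathcal{L}$ by $(1-\theta)I$ with $\theta=(\sigma_n+\sigma_2)/2$ so that the restriction to $\psi^\perp$ has spectral radius $\rho=(\sigma_n-\sigma_2)/2$, and then using both $e(X,Y)=0$ and $\langle f_X,f_Y\rangle=0$ to obtain $f_X^T M' f_Y=0$---are all sound, and the algebra that follows is routine and accurate. Your handling of the degenerate cases ($X$ or $Y$ empty; positivity of $\theta$ and of the denominator volumes via connectedness) is also fine.
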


\begin{corollary}\label{cor:2t}
Let $G$ be a connected graph on $n\ge 2$ vertices.
If $X$ and $Y$ are two disjoint subsets of $V(G)$ such that there is no edge between $X$ and $Y$, then
$$\frac{|X||Y|}{(n-|X|)(n-|Y|)}\le \left(\frac{\sigma_n-\sigma_2}{\sigma_n+\sigma_2}\cdot \frac{\Delta}{\delta}\right)^2.$$
\end{corollary}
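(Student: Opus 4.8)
The plan is to deduce this directly from Butler's theorem (Theorem~\ref{n2t}), trading volumes for vertex cardinalities via the minimum and maximum degree bounds. This mirrors exactly the proof of Corollary~\ref{cor:1t}, the only difference being that the volume-ratio input now comes from Theorem~\ref{n2t} rather than from Theorem~\ref{thm:fc}.

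First I would invoke Theorem~\ref{n2t}. Since $G$ is connected on $n\ge 2$ vertices and $X, Y$ are disjoint with no edge between them, the theorem gives $\vol(X)\vol(Y)/(\vol(\overline{X})\vol(\overline{Y})) \le \left(\frac{\sigma_n-\sigma_2}{\sigma_n+\sigma_2}\right)^2$, i.e.
$$\vol(X)\vol(Y) \le \left(\frac{\sigma_n-\sigma_2}{\sigma_n+\sigma_2}\right)^2 \vol(\overline{X})\vol(\overline{Y}).$$

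Next I would replace each volume by a degree count, being careful about the direction of each estimate. Because every vertex degree lies between $\delta$ and $\Delta$, we have the lower bounds $\vol(X)\ge \delta|X|$ and $\vol(Y)\ge \delta|Y|$ for the two sets on the left, and the upper bounds $\vol(\overline{X})\le \Delta|\overline{X}|=\Delta(n-|X|)$ and $\vol(\overline{Y})\le \Delta|\overline{Y}|=\Delta(n-|Y|)$ for their complements on the right. Substituting these into the displayed inequality yields
$$\delta^2 |X||Y| \le \left(\frac{\sigma_n-\sigma_2}{\sigma_n+\sigma_2}\right)^2 \Delta^2 (n-|X|)(n-|Y|),$$
and dividing through by $\delta^2 (n-|X|)(n-|Y|)$ gives precisely the claimed bound $\frac{|X||Y|}{(n-|X|)(n-|Y|)}\le \left(\frac{\sigma_n-\sigma_2}{\sigma_n+\sigma_2}\cdot \frac{\Delta}{\delta}\right)^2$.

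There is no real obstacle here: this is a routine corollary whose entire content is the passage from volumes to cardinalities. The only point requiring care is matching the inequality directions correctly—lower bounds on $\vol(X),\vol(Y)$ and upper bounds on $\vol(\overline{X}),\vol(\overline{Y})$—so that they compose with Butler's inequality in the right sense. The hypotheses that $G$ be connected and $n\ge 2$ are inherited solely from Theorem~\ref{n2t}, and the absence of isolated vertices (guaranteeing $\delta\ge 1$, so that the division is legitimate) is implicit in $G$ being connected with $n\ge 2$.
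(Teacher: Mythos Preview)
Your proof is correct and matches the paper's own argument: the paper simply writes ``Similar to the proof of Corollary~\ref{cor:1t},'' and what you have written is precisely that argument with Theorem~\ref{n2t} in place of Theorem~\ref{thm:fc}.
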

\begin{proof}
Similar to the proof of Corollary~\ref{cor:1t}.
\end{proof}

\section{Matching number}\label{sect:matching}
A {\bf matching} in a graph is a set of pairwise non-adjacent  edges. A {\bf maximum matching} is one with maximum size among all matchings of the graph. The {\bf matching number} of a graph $G$, denoted by $\alpha'(G)$, is the size of a maximum matching in $G$.
A {\bf perfect matching} in a graph $G$ with $|V(G)|$ even is a matching of size $|V(G)|/2$.
It has been shown by Tutte~\cite{Tutte47} that a graph $G$ has a perfect matching if and only if $o(G-S)\le |S|$ for every $S\subseteq V(G)$, where $o(G-S)$ denotes the number of odd components of $G-S$. Generalizing Tutte's condition, Berge discovered the following formula for the matching number. The formula can also be derived from Tutte's theorem, and thus is called the {\bf Berge-Tutte Formula}.

\begin{theorem}[Berge~\cite{Berge58}]
\label{btfo}
The matching number of a graph $G$ is $$\alpha'(G) = \frac{1}{2} \left( |V(G)| + \min_{S\subseteq V(G)} \big\{|S| - o(G-S)\big\} \right).$$
\end{theorem}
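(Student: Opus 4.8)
The plan is to derive the formula from Tutte's theorem (stated above), via the standard device of padding the graph with universal vertices. Write $n=|V(G)|$ and set $d=\max_{S\subseteq V(G)}\{o(G-S)-|S|\}$, so that, since $\min_{S}\{|S|-o(G-S)\}=-d$, the claimed identity is equivalent to $\alpha'(G)=\tfrac{1}{2}(n-d)$. I would prove the two inequalities $\alpha'(G)\le \tfrac12(n-d)$ and $\alpha'(G)\ge\tfrac12(n-d)$ separately.

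First I would establish the easy inequality $\alpha'(G)\le\tfrac12(n-d)$. Fix any $S\subseteq V(G)$ and any matching $M$. Each odd component $C$ of $G-S$ has odd order, so $M$ cannot saturate all of $C$ using edges internal to $C$; hence at least one vertex of $C$ is either left unsaturated by $M$ or matched along an edge into $S$. Since the vertices of $S$ absorb at most $|S|$ such edges, at least $o(G-S)-|S|$ of the odd components each leave a vertex unsaturated. Thus $M$ misses at least $o(G-S)-|S|$ vertices, giving $n-2|M|\ge o(G-S)-|S|$; taking $M$ maximum and maximizing over $S$ yields $n-2\alpha'(G)\ge d$, as desired (the inequality being trivial for those $S$ with $o(G-S)<|S|$).

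For the reverse inequality I would build an auxiliary graph. A parity check, namely $o(G-S)\equiv n-|S|\pmod 2$ and hence $o(G-S)-|S|\equiv n\pmod 2$ for every $S$, shows $d\equiv n\pmod 2$, so $n+d$ is even; also $d\ge 0$ by taking $S=\emptyset$. Form $G'$ from $G$ by adding a set $U$ of exactly $d$ new vertices, each joined to every other vertex of $G'$. The key step is to verify that $G'$ satisfies Tutte's condition. Given $S'\subseteq V(G')$, I would split into two cases according to whether $U\subseteq S'$. If some vertex of $U$ survives, it is adjacent to all remaining vertices, so $G'-S'$ is connected and $o(G'-S')\le 1\le|S'|$ (the subcase $S'=\emptyset$ using that $n+d$ is even). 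If $U\subseteq S'$, then $S'=U\cup S$ with $S\subseteq V(G)$ and $G'-S'=G-S$, so $o(G'-S')=o(G-S)\le|S|+d=|S'|$ by the definition of $d$. Tutte's theorem then supplies a perfect matching $M'$ of $G'$; deleting the at most $d$ edges of $M'$ meeting $U$ leaves a matching of $G$ missing at most $d$ vertices, whence $\alpha'(G)\ge\tfrac12(n-d)$.

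The main obstacle is the correct setup and case analysis for $G'$: one must take the number of padding vertices to be \emph{exactly} $d$ (not merely an upper bound) and must handle the parity so that a perfect matching can exist at all, and then ensure the two cases in the Tutte verification are exhaustive and each inequality is tight. Once $G'$ is correctly defined, both the verification of Tutte's condition and the passage from a perfect matching of $G'$ back to a near-perfect matching of $G$ are routine.
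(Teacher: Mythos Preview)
Your proof is correct and follows the standard textbook derivation of the Berge--Tutte formula from Tutte's theorem via padding with universal vertices. The paper itself does not prove this theorem: it is quoted as a known result attributed to Berge, with only the remark that ``the formula can also be derived from Tutte's theorem.'' Your argument is precisely that derivation, so there is nothing to compare against and no gap to point out.
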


Here is an analogue of the Berge-Tutte Formula for bipartite graphs by Ore, which can also be obtained by the well-known Hall's condition~\cite{Hall35}. A $(U,W)$-bipartite graph is a bipartite graph with bipartition $U, W$. For a vertex subset $S$, let $N(S)$ denote the set of vertices having a neighbor in $S$.
\begin{theorem}[Ore~\cite{Ore55}]
\label{hall}
The matching number of a $(U, W)$-bipartite graph $G$ is
$$\alpha'(G) = |U| + \min_{S\subseteq U} \big\{|N(S)|-|S|\big\}.$$
\end{theorem}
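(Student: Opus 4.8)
The plan is to derive this defect form of Hall's theorem from the classical Hall condition~\cite{Hall35}, which is already acknowledged in the statement. Write $d=\max_{S\subseteq U}\{|S|-|N(S)|\}$ for the maximum deficiency; taking $S$ to be the empty set shows $d\ge 0$, and since $\min_{S\subseteq U}\{|N(S)|-|S|\}=-d$, the assertion to be proved is precisely $\alpha'(G)=|U|-d$. I would prove matching upper and lower bounds on $\alpha'(G)$.

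First I would establish the upper bound $\alpha'(G)\le |U|-d$. Let $M$ be any matching and let $S\subseteq U$ attain the maximum deficiency. Each vertex of $S$ that is saturated by $M$ is matched to a distinct vertex of $N(S)$, so at most $|N(S)|$ vertices of $S$ can be saturated; hence at least $|S|-|N(S)|=d$ vertices of $U$ are left unsaturated by $M$. Therefore $|M|\le |U|-d$, and in particular $\alpha'(G)\le |U|-d$.

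For the reverse inequality I would use an augmentation trick. Form a new bipartite graph $G'$ by adjoining $d$ new vertices to the side $W$, each joined to every vertex of $U$; write the enlarged part as $W'=W\cup D$ with $|D|=d$. The key step is to check that $G'$ satisfies Hall's condition for $U$: for any nonempty $T\subseteq U$ we have $N_{G'}(T)=N_G(T)\cup D$, so
$$|N_{G'}(T)|=|N_G(T)|+d\ge |N_G(T)|+\big(|T|-|N_G(T)|\big)=|T|,$$
using the definition of $d$, while the empty set is trivial. By Hall's theorem $G'$ admits a matching $M'$ saturating $U$, with $|M'|=|U|$. Deleting the at most $d$ edges of $M'$ incident to $D$ yields a matching of $G$ of size at least $|U|-d$, so $\alpha'(G)\ge |U|-d$. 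Combining the two bounds gives the formula.

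The argument is entirely standard and presents no genuine obstacle; the only point requiring care is the verification of Hall's condition in the augmented graph $G'$, which is exactly where the definition of the deficiency $d$ is used. Alternatively, the identity can be read off from K\"onig's minimum vertex cover theorem, by noting that a minimum cover $C$ gives $S=U\setminus C$ with $N(S)\subseteq C\cap W$, or obtained as a bipartite special case of the Berge--Tutte Formula (Theorem~\ref{btfo}) stated above; among these the Hall-based derivation seems the most transparent.
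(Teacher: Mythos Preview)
Your proof is correct. The paper does not supply its own proof of this theorem---it is quoted as a known result of Ore, with the remark that it ``can also be obtained by the well-known Hall's condition~\cite{Hall35}''---and your derivation via the deficiency form of Hall's theorem is precisely the route the paper alludes to.
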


We prove lower bounds on matching number of (weakly) $(n,\beta)$-graphs.

\begin{theorem}\label{main1}
Let $G$ be a weakly $(n,\beta)$-graph. Then
$$\alpha'(G)\ge \min\left\{\frac{1-\beta}{1+\beta},\, \frac{1}{2}\right\}\cdot (n-1).$$
In particular, if $n$ is even and $0<\beta\le 1/3$, then $G$ has a perfect matching.
\end{theorem}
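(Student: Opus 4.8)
The plan is to run the Berge--Tutte Formula (Theorem~\ref{btfo}) against the defining inequality \eqref{Liu3e} applied to the components of $G-S$. Fix $S\subseteq V(G)$ attaining the minimum in Theorem~\ref{btfo}, and set $s=|S|$ and $q=o(G-S)$, so that $\alpha'(G)=\tfrac12\bigl(n-(q-s)\bigr)$. It therefore suffices to bound the deficiency $d:=q-s$ from above; a short computation shows that $\alpha'(G)\ge\frac{1-\beta}{1+\beta}(n-1)$ is equivalent to $d\le\frac{(3\beta-1)n+2(1-\beta)}{1+\beta}$, while the truncation at $\tfrac12$ in the statement is forced only by the trivial bound $\alpha'(G)\le n/2$ (for $\beta<\tfrac13$ the genuine target becomes $d\le1$).

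We may assume $q\ge2$, since otherwise $d\le1-s\le1$ and we are done; thus $G-S$ has at least two components. The engine is the observation that distinct components of $G-S$ are pairwise non-adjacent, so any splitting of $V(G)\setminus S$ into two nonempty unions of components $X\sqcup Y$ satisfies $e(X,Y)=0$ and hence, by \eqref{Liu3e} together with $n-|X|=s+|Y|$ and $n-|Y|=s+|X|$,
\[
\frac{|X|\,|Y|}{(s+|X|)(s+|Y|)}\le\beta^2 .
\]
Since for fixed $|X|+|Y|=n-s$ the left-hand side is maximized at the balanced split $|X|=|Y|=\tfrac{n-s}{2}$, a balanced choice is best. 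If $s\ge\frac{1-\beta}{1+\beta}n$, then using only $q\le n-s$ already gives $d\le n-2s\le\frac{(3\beta-1)n}{1+\beta}$, which lies within the target. The crucial point is that to beat the naive estimate one must keep the denominators $s+|X|,\,s+|Y|$ small by letting $X\cup Y$ exhaust $V(G)\setminus S$; splitting off only a few vertices (say one per odd component) loses a factor of two because it discards the role of $s$.

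The main obstacle is precisely that $X$ and $Y$ must be unions of \emph{whole} components, so an exactly balanced split need not exist, and this granularity is what degrades $n$ to $n-1$. I would treat the remaining range $s<\frac{1-\beta}{1+\beta}n$ by noting that here a balanced split would violate the displayed inequality; consequently \emph{no} balanced split exists, which forces the component sizes to be lopsided. The delicate step is to convert ``lopsided'' into a quantitative cap on the number of components, hence on $q$: when a single component exceeds $\tfrac{n-s}{2}$ one isolates it as $X$, and \eqref{Liu3e} combined with the count $q\le|Y|+1$ pins $|Y|$ down to at most $\frac{\beta}{1-\beta}s$, giving $d\le|Y|+1-s$ within the target; the harder situation is many medium components, where one must argue that enough small odd components would in fact be balanceable (driving $s$ back up) and so cannot be too numerous. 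Reconciling these regimes, and extracting the clean constant $\frac{1-\beta}{1+\beta}$ with the single $-1$ correction, is the technical heart of the proof.

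Finally, the perfect-matching assertion is immediate from the main bound. For $0<\beta\le\tfrac13$ we have $\frac{1-\beta}{1+\beta}\ge\tfrac12$, so $\alpha'(G)\ge\tfrac12(n-1)=\tfrac n2-\tfrac12$; as $\alpha'(G)$ is an integer and $n$ is even, this forces $\alpha'(G)=n/2$, i.e.\ a perfect matching.
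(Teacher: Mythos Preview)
Your setup is exactly right and matches the paper: apply Berge--Tutte and bound the deficiency $d=q-s$ by splitting $V(G)\setminus S$ into two nonempty unions of components $X\sqcup Y$ and invoking the defining inequality. But from that point on, the paper's execution diverges from yours and fills in precisely the part you flag as ``the technical heart.''

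The paper does not attempt the dichotomy ``one dominant component vs.\ many medium components.'' Instead it isolates a clean algebraic consequence of the inequality (Lemma~\ref{22l}): whenever $X\sqcup Y=V(G)\setminus S$ with $|X|\le|Y|$ and $e(X,Y)=0$, one has simultaneously
\[
|X|\le\frac{\beta n}{1+\beta}\qquad\text{and}\qquad |S|\ge\frac{1-\beta}{\beta}\,|X|.
\]
Both bounds depend only on the \emph{smaller} side $|X|$, so all one needs is a split with $|X|\gtrsim c/2$. Taking $X$ to be the union of the $\lfloor c/2\rfloor$ \emph{smallest} components (so $|X|\le|Y|$ automatically) already gives $|X|\ge (c-1)/2$ and hence $|S|>\frac{(1-\beta)(c-1)}{2\beta}$. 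For the sharper estimate $c-|S|\le\frac{(3\beta-1)n}{1+\beta}$ one wants $|X|\ge c/2$; the paper secures this (when not every component is a singleton) by invoking a partition lemma from~\cite{GL22}, and handles the all-singleton case by a direct count. Then $c\le 2|X|$ and $|S|\ge\frac{1-\beta}{\beta}|X|$ combine in one line to $c-|S|\le\frac{3\beta-1}{\beta}|X|\le\frac{(3\beta-1)n}{1+\beta}$.

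Your route via ``a balanced split would violate the inequality, hence components must be lopsided'' does not by itself lead anywhere: the nonexistence of an \emph{exact} bisection $|X|=|Y|$ is nearly content-free (it already fails whenever $n-s$ is odd, regardless of component structure), so it does not force a large component. Your subcase with a single component exceeding $(n-s)/2$ can indeed be pushed through, but the residual ``many medium components'' case is where the work lies, and the proposal gives no mechanism there. The missing idea is not a further case split but the observation that the right target for $|X|$ is $c/2$ rather than $(n-s)/2$; once you aim for that, Lemma~\ref{22l} does everything.

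Your derivation of the perfect-matching assertion from the main bound is correct and is exactly how the paper treats it.
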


\begin{theorem}\label{main2}
Let $G$ be an $(n,\beta)$-graph. Then
$$\alpha'(G)\ge \min\left\{\frac{2-\beta}{2(1+\beta)},\, \frac{1}{2}\right\}\cdot (n-1).$$
In particular, if $n$ is even and $0<\beta\le 1/2$, then $G$ has a perfect matching.
\end{theorem}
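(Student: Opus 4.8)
The plan is to control the Berge--Tutte deficiency. By Theorem~\ref{btfo}, fix $S\subseteq V(G)$ attaining $\min_{S}\{|S|-o(G-S)\}$ and set $s=|S|$ and $c=o(G-S)$, so that $\alpha'(G)=\frac12\bigl(n-(c-s)\bigr)$. If $c\le s$, then $\alpha'(G)\ge n/2$ and there is nothing to prove, so assume $c\ge s+1$. I may also assume $G-S$ has at least two components, since otherwise $c\le1$ and $\alpha'(G)\ge\frac12(n-1)$; in that case $s\ge1$, because if $G$ were disconnected then taking $X,Y$ to be the vertex sets of two sides of a disconnection would make the left-hand side of~\eqref{Liu3e} equal to $1>\beta^2$. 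Write $C_1,\dots,C_c$ for the odd components of $G-S$. Everything reduces to showing $c-s\le\frac{2\beta-1}{1+\beta}\,n$, up to the lower-order room afforded by the factor $(n-1)$ in the statement.

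The first step exploits the feature that distinguishes $(n,\beta)$-graphs from weakly $(n,\beta)$-graphs, namely that $X$ and $Y$ in~\eqref{Liu3e} need not be disjoint. Choosing one vertex from each odd component yields an independent set $I$ with $|I|=c$ (vertices in different components of $G-S$ are nonadjacent), and applying~\eqref{Liu3e} with $X=Y=I$ gives $\frac{c^{2}}{(n-c)^{2}}\le\beta^{2}$, that is,
$$c\le\frac{\beta}{1+\beta}\,n.$$
The second step bounds $s$ from below. I would partition $V(G)\setminus S$ into two sets $X,Y$, each a union of components of $G-S$ and hence mutually nonadjacent, chosen as balanced as possible. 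Since $n-|X|=s+|Y|$ and $n-|Y|=s+|X|$, a perfectly balanced split $|X|=|Y|=\frac{n-s}{2}$ turns~\eqref{Liu3e} into $\frac{n-s}{n+s}\le\beta$, i.e.
$$s\ge\frac{1-\beta}{1+\beta}\,n.$$

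Combining the two displays gives $c-s\le\frac{2\beta-1}{1+\beta}\,n$. For $\beta\ge\frac12$ this is nonnegative and yields $\alpha'(G)\ge\frac12\bigl(n-\frac{2\beta-1}{1+\beta}n\bigr)=\frac{2-\beta}{2(1+\beta)}\,n\ge\frac{2-\beta}{2(1+\beta)}(n-1)$; for $\beta<\frac12$ it forces $c<s$, contradicting $c\ge s+1$, so that subcase cannot occur and $\alpha'(G)\ge n/2$. In either regime $\alpha'(G)\ge\min\{\frac{2-\beta}{2(1+\beta)},\frac12\}(n-1)$. For the last assertion, when $0<\beta\le\frac12$ the bound reads $\alpha'(G)\ge\frac12(n-1)$, and if $n$ is even then $\alpha'(G)$ is an integer with $\frac{n-1}{2}\le\alpha'(G)\le\frac n2$, forcing $\alpha'(G)=\frac n2$, a perfect matching.

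The main obstacle is the balancing in the second step: the components of $G-S$ have prescribed sizes, so an exactly even bipartition of $V(G)\setminus S$ may not exist. I would produce a split greedily (adding each component to the currently smaller side), which makes $\bigl||X|-|Y|\bigr|$ at most the order of the largest component; when all components are small this perturbs the bound on $s$ by $O(1)$, which is exactly what the gap between $\frac{2-\beta}{2(1+\beta)}\,n$ and $\frac{2-\beta}{2(1+\beta)}(n-1)$ is designed to absorb. The one genuinely different situation is a dominant component $C^{\ast}$ holding more than half of $V(G)\setminus S$; there I would apply~\eqref{Liu3e} to $X=C^{\ast}$ and $Y=(V(G)\setminus S)\setminus C^{\ast}$ to extract a (weaker) lower bound on $s$ and combine it with the transversal bound $c\le\frac{\beta}{1+\beta}n$, which still suffices. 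This parallels the proof of Theorem~\ref{main1}: there $X,Y$ must be disjoint, the step $X=Y=I$ is unavailable, and one can only use $c\le n-s$ in place of $c\le\frac{\beta}{1+\beta}n$, which is precisely what weakens the constant from $\frac{2-\beta}{2(1+\beta)}$ to $\frac{1-\beta}{1+\beta}$.
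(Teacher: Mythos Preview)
Your first step---choosing one vertex per odd component to form an independent set $I$ and applying \eqref{Liu3e} with $X=Y=I$ to get $c\le\frac{\beta n}{1+\beta}$---is correct and matches the paper's argument in Lemma~\ref{L23l}(ii).

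The second step has a genuine gap. The target $s\ge\frac{1-\beta}{1+\beta}n$ requires an essentially even bipartition of $V(G)\setminus S$ into unions of components, and your two-case patch does not cover the middle ground: greedy balancing only controls $\bigl||X|-|Y|\bigr|$ by the size $M$ of the largest component, and $M$ can be $\Theta(n)$ with no component dominant (three equal pieces, say). The constant slack between $\frac{2-\beta}{2(1+\beta)}n$ and $\frac{2-\beta}{2(1+\beta)}(n-1)$ cannot absorb a $\Theta(n)$ imbalance. The paper avoids balance entirely: rather than pushing $|X|$ toward $\frac{n-s}{2}$, it finds a bipartition with $c\le|X|\le|Y|$, so that Lemma~\ref{22l} yields both $s\ge\frac{1-\beta}{\beta}|X|$ and $|X|\le\frac{\beta n}{1+\beta}$, whence
\[
c-s\;\le\;|X|-\frac{1-\beta}{\beta}|X|\;=\;\frac{2\beta-1}{\beta}|X|\;\le\;\frac{2\beta-1}{1+\beta}\,n.
\]
The existence of a split with $|X|\ge c$ (when $|V(G)\setminus S|\ge 2c+1$ and the $c-1$ smallest components are not all singletons) is a nontrivial combinatorial lemma quoted from~\cite{Gu21b}; the remaining boundary situations are handled separately in Lemma~\ref{L23l}(ii). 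Note that a split with merely $|X|\ge c/2$---which naive grouping does guarantee---falls short of the constant $\frac{2-\beta}{2(1+\beta)}$; reaching $|X|\ge c$ is precisely the improvement needed.
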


\begin{theorem}\label{main3}
Let $G$ be a $(U, W)$-bipartite graph with $|W|\ge t|U|$, where $t\ge 1$. If $G$ is a weakly $(n,\beta)$-graph, then $$\alpha'(G)\ge \min\left\{t(1-2\beta^2),1\right\}\cdot |U|.$$
\end{theorem}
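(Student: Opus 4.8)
The plan is to combine Ore's formula (Theorem~\ref{hall}) with the defining separation inequality~\eqref{Liu3e}, applied not to a single deficient set but to a large independent set manufactured from it. Write $u=|U|$, $w=|W|=\tau u$ with $\tau\ge t\ge 1$, and $n=u+w$. By Theorem~\ref{hall} there is $S\subseteq U$ with $\alpha'(G)=u+|N(S)|-|S|$; if $|N(S)|\ge|S|$ then $\alpha'(G)\ge u$, and since $\alpha'(G)\le u$ always (as $|W|\ge|U|$) we are done, so assume $|N(S)|<|S|$. First I would observe that $I:=S\cup(W\setminus N(S))$ is an independent set: each of $S\subseteq U$ and $W\setminus N(S)\subseteq W$ is independent, and there is no edge between them by the definition of $N(S)$. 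Moreover $|I|=|S|+(w-|N(S)|)=w+(|S|-|N(S)|)=n-\alpha'(G)$, so an upper bound on $|I|$ is exactly what is needed.

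The separation step is to bound $|I|$ by halving it. Partition $I=X\cup Y$ into two disjoint parts as equal as possible; since both lie inside the independent set $I$, there is no edge between $X$ and $Y$, so~\eqref{Liu3e} applies. With $|X|,|Y|\approx|I|/2$ this gives $\frac{(|I|/2)^2}{(n-|I|/2)^2}\le\beta^2$, hence $\tfrac{|I|}{2}\le\beta\bigl(n-\tfrac{|I|}{2}\bigr)$ and therefore $|I|\le\frac{2\beta n}{1+\beta}$ (a parity correction for odd $|I|$ costs only a harmless additive constant, absorbed by the slack below). Consequently $\alpha'(G)=n-|I|\ge\frac{1-\beta}{1+\beta}\,n$. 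Applying the identical halving argument to the independent set $W$ itself gives $w\le\frac{2\beta n}{1+\beta}$, i.e. $\tau=w/u\le\frac{2\beta}{1-\beta}$; this is the bound that keeps the statement non-vacuous.

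It remains to convert $\alpha'(G)\ge\frac{1-\beta}{1+\beta}n=\frac{(1-\beta)(1+\tau)}{1+\beta}u$ into the stated form. I would check the elementary inequality $\frac{(1-\beta)(1+\tau)}{1+\beta}\ge\tau(1-2\beta^2)$ on the range $1\le\tau\le\frac{2\beta}{1-\beta}$. The difference is affine in $\tau$, so it suffices to verify nonnegativity at the two endpoints: it is easily seen positive at $\tau=1$, and at $\tau=\frac{2\beta}{1-\beta}$ it equals $\frac{(2\beta-1)^2(1+\beta)}{1-\beta}\ge0$. Hence $\alpha'(G)\ge\tau(1-2\beta^2)u\ge t(1-2\beta^2)u$, and together with $\alpha'(G)\le u$ this yields $\alpha'(G)\ge\min\{t(1-2\beta^2),1\}u$.

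The main obstacle is conceptual rather than computational. The ``obvious'' attempt—feed the pair $(S,\,W\setminus N(S))$ coming from Ore's theorem directly into~\eqref{Liu3e}—does \emph{not} suffice: that single inequality is far too weak when $|S|$ is small or when $\tau$ is near its extreme value, and one can exhibit parameters satisfying it while violating the conclusion. What rescues the argument is that the deficiency $|S|-|N(S)|$ is really controlled by a \emph{global} quantity, the size of the independent set $I$, which the halving trick bounds uniformly; the same trick applied to $W$ simultaneously supplies $\tau\le\frac{2\beta}{1-\beta}$, precisely what makes the final algebraic reduction legitimate. A secondary point to watch is the parity of $|I|$ in the halving step, which must be checked not to erode the bound—it does not, except possibly in the boundary case $\beta=\tfrac12$ (where the target inequality is already tight and should be handled directly).
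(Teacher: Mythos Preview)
Your route is genuinely different from the paper's, but it carries an unresolved technical gap, and the ``obvious'' direct approach you dismiss is in fact essentially what the paper does successfully.

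\medskip
\noindent\textbf{What the paper does.} The paper applies~\eqref{Liu3e} once, to $X=S$ and $Y=V(G)\setminus(S\cup N(S))=(U\setminus S)\cup(W\setminus N(S))$, not to the smaller pair $(S,\,W\setminus N(S))$ you considered. With $x=|S|$, $y=|N(S)|$, this gives
\[
\frac{x(n-x-y)}{(x+y)(n-x)}\le\beta^2,
\]
and since $y<x$ one has $\frac{x}{x+y}>\tfrac12$, so $\tfrac12\bigl(1-\tfrac{y}{n-x}\bigr)<\beta^2$, i.e.\ $y>(n-x)(1-2\beta^2)$. Then
\[
x-y<x-(n-x)(1-2\beta^2)\le 2(1-\beta^2)n_1-(n_1+n_2)(1-2\beta^2)\le(1-t(1-2\beta^2))n_1,
\]
finishing by Ore's formula. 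No halving, no parity, no second application to $W$; the single pair already encodes both $|S|$ and the complement of $N(S)$, which is what makes the computation close. Your remark that the direct approach ``does not suffice'' is therefore off the mark: enlarging $Y$ from $W\setminus N(S)$ to $V(G)\setminus(S\cup N(S))$ is exactly the small twist needed.

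\medskip
\noindent\textbf{The gap in your argument.} Your halving step only yields $|I|\le\frac{2\beta n}{1+\beta}$ for even $|I|$; for odd $|I|$ you lose an additive unit, and likewise for odd $w$ in the bound on $\tau$. You assert this is ``absorbed by the slack'', but the slack $f(\tau)=\frac{(1-\beta)(1+\tau)}{1+\beta}-\tau(1-2\beta^2)$ at the right endpoint $\tau=\frac{2\beta}{1-\beta}$ equals $\frac{(2\beta-1)^2(1+\beta)}{1-\beta}$, which vanishes at $\beta=\tfrac12$ and is $O((2\beta-1)^2)$ nearby. For $u$ bounded and $\beta$ near $\tfrac12$ the two parity losses (one in $|I|$, one in $w$, the latter pushing $\tau$ past the endpoint where $f$ is still decreasing) are not absorbed, and ``should be handled directly'' is not a proof. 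Concretely, for $K_{2,3}$ at its minimal $\beta=1/\sqrt{6}$ one already has $w=3>\frac{2\beta n}{1+\beta}\approx 2.90$ and $|I|=3>\frac{2\beta n}{1+\beta}$; your intermediate inequality $\alpha'(G)\ge\frac{(1-\beta)n}{1+\beta}$ is false there ($2<2.10$). That instance escapes because $\alpha'=|U|$ and you exit early, but it shows the halved independence bound you rely on is not literally true, so the chain needs a genuine repair rather than a hand-wave.
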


\begin{theorem}\label{main4}
Let $G$ be a $(U, W)$-bipartite graph with $|W|\ge |U|$. If $G$ is an $(n,\beta)$-graph with no isolated vertices, then $$\alpha'(G)\ge \min\left\{1/\beta^{2},1\right\}\cdot |U|.$$
\end{theorem}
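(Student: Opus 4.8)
The plan is to combine Ore's formula (Theorem~\ref{hall}) with the overlapping ($X=Y$) instance of the $(n,\beta)$-condition applied to a single well-chosen independent set. It is precisely the ability to take $X=Y$ that is unavailable for weakly $(n,\beta)$-graphs, and exploiting it is what should produce the stronger factor $1/\beta^2$ here rather than the $1-2\beta^2$ type factor seen in the weakly case.

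First I would pick $S\subseteq U$ attaining the minimum in Theorem~\ref{hall}, so that $\alpha'(G)=|U|+|N(S)|-|S|$. The key observation is that $I:=S\cup(W\setminus N(S))$ is an independent set: $S$ and $W\setminus N(S)$ are independent as subsets of the two colour classes of the bipartition, and there is no edge between them by the very definition of $N(S)$. A direct count gives $|I|=|S|+(|W|-|N(S)|)=n-\alpha'(G)$, and $I$ is a proper subset as soon as $\alpha'(G)\ge 1$, which holds because $G$ has an edge (no isolated vertices). Applying the $(n,\beta)$-condition with $X=Y=I$ then yields $\frac{|I|^2}{(n-|I|)^2}\le\beta^2$, i.e. $\frac{n-\alpha'(G)}{\alpha'(G)}\le\beta$, which rearranges to $\alpha'(G)\ge \frac{n}{1+\beta}$. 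Note that no case split on whether $S$ is deficient is needed, since this argument never assumed $|N(S)|<|S|$.

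To convert this into the stated bound I would first record that $\beta\ge 1$ always holds in this setting: taking $X=Y=W$, an independent proper subset, in the $(n,\beta)$-condition gives $\frac{|W|^2}{|U|^2}\le\beta^2$, hence $\beta\ge |W|/|U|\ge 1$. Now using $|W|\ge|U|$ we obtain $\alpha'(G)\ge\frac{n}{1+\beta}=\frac{|U|+|W|}{1+\beta}\ge\frac{2|U|}{1+\beta}$, and since $2\beta^2-\beta-1=(\beta-1)(2\beta+1)\ge 0$ for $\beta\ge 1$ we have $\frac{2}{1+\beta}\ge\frac{1}{\beta^2}$; therefore $\alpha'(G)\ge |U|/\beta^2$. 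Finally $\beta\ge 1$ forces $1/\beta^2\le 1$, so $\min\{1/\beta^2,1\}=1/\beta^2$ and the bound is exactly $\min\{1/\beta^2,1\}\cdot|U|$.

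The main obstacle is conceptual rather than computational: one must recognize that the disjoint separation $X=S$, $Y=W\setminus N(S)$ that arises naturally from a deficiency set (and that suffices for the weakly case) is too weak here, since $\beta$ is in fact governed by a much larger independent set, and that the right object is the single set $I$ fed into the $X=Y$ version of the inequality. The clean identity $|I|=n-\alpha'(G)$ is what makes the estimate collapse to $\alpha'(G)\ge n/(1+\beta)$; the remaining steps are the routine verifications that $\beta\ge 1$ and that $n/(1+\beta)$ exceeds $|U|/\beta^2$ under the hypothesis $|W|\ge|U|$.
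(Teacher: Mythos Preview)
Your argument is correct, and it follows a different (and in some ways sharper) route than the paper. The paper argues by contradiction from a deficient set $S$: it applies the $(n,\beta)$-condition to the overlapping pair $X=S$ and $Y=U\cup(W\setminus N(S))$, and from $\frac{|S|\,|Y|}{(n-|S|)(n-|Y|)}\le\beta^2$ together with $|N(S)|<|S|$ it extracts $|N(S)|>|S|/\beta^2$, which forces $r>1/\beta^2$ and a contradiction. You instead take the single independent set $I=S\cup(W\setminus N(S))$ and feed it into the $X=Y$ instance of the definition, obtaining the clean inequality $\alpha'(G)\ge n/(1+\beta)$ directly from $|I|=n-\alpha'(G)$. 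Your route has two pleasant by-products the paper's proof does not record: the observation that any nontrivial bipartite $(n,\beta)$-graph satisfies $\beta\ge|W|/|U|\ge 1$ (via $X=Y=W$), so the ``$\min$'' in the statement is always $1/\beta^2$; and the intermediate bound $\alpha'(G)\ge n/(1+\beta)$, which is in general at least as strong as $|U|/\beta^2$. The paper's choice of $X\ne Y$ is closer in spirit to the weakly case (Theorem~\ref{main3}), whereas your $X=Y$ choice more transparently isolates the feature of $(n,\beta)$-graphs that drives the improvement.
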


A graph $G$ is {\bf factor-critical} if for every vertex $v\in V(G)$, $G-v$ has a perfect matching. Obviously, any factor-critical graph has an odd number of vertices. We have the following result for factor-critical graphs.
\begin{theorem}\label{main5}
Let $n$ be odd.
\\{\rm(i)} If $G$ is a weakly $(n,\beta)$-graph and $0<\beta\le 1/3$, then $G$ is factor-critical.
\\{\rm(ii)} If $G$ is an $(n,\beta)$-graph and $0<\beta\le 1/2$, then $G$ is factor-critical.
\end{theorem}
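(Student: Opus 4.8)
The plan is to reduce factor-criticality to a Tutte-type deficiency condition and then to exclude a single offending configuration using the separation machinery behind Theorems~\ref{main1} and~\ref{main2}. First I would record that, for odd $n$, a graph $G$ is factor-critical if and only if $o(G-S)\le |S|-1$ for every nonempty $S\subseteq V(G)$. This follows by applying Tutte's theorem~\cite{Tutte47} to each $G-v$: the graph $G-v$, on the even number $n-1$ of vertices, has a perfect matching exactly when $o(G-v-T)\le |T|$ for all $T\subseteq V(G)\setminus\{v\}$, and setting $S=\{v\}\cup T$ rewrites this as $o(G-S)\le |S|-1$ as $v$ and $T$ range over all nonempty $S$. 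Since $n$ is odd, $o(G-S)\equiv n-|S|\equiv |S|+1\pmod{2}$, so $o(G-S)$ and $|S|$ always have opposite parity; hence $o(G-S)\le |S|-1$ is equivalent to $o(G-S)\ne |S|+1$.

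Second, I would use the matching bounds already proved to bound the deficiency. For $0<\beta\le 1/3$ one has $\frac{1-\beta}{1+\beta}\ge\frac12$, so Theorem~\ref{main1} gives $\alpha'(G)\ge (n-1)/2$; likewise for $0<\beta\le 1/2$ one has $\frac{2-\beta}{2(1+\beta)}\ge\frac12$, so Theorem~\ref{main2} gives $\alpha'(G)\ge (n-1)/2$. By the Berge--Tutte formula (Theorem~\ref{btfo}), $d(G):=\max_{S\subseteq V(G)}\{o(G-S)-|S|\}=n-2\alpha'(G)\le 1$, and hence $o(G-S)\le |S|+1$ for every $S$. Combined with the parity observation above, the factor-critical condition can fail only if $o(G-S)=|S|+1$ for some nonempty $S$; this is the single case I must rule out.

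Third, I would exclude it by the separation argument underlying Lemmas~\ref{22l} and~\ref{L23l}. Suppose some nonempty $S$ with $s:=|S|\ge 1$ has $o(G-S)=s+1=:q$, and let $C_1,\dots,C_q$ be the odd components of $G-S$; as distinct components of $G-S$, there is no edge of $G$ between any two of them, and hence none between any two disjoint unions of them. Partitioning the components of $G-S$ into two unions $X$ and $Y$ that are as balanced in size as possible yields disjoint subsets with $e(X,Y)=0$ and a lower bound on $\frac{|X||Y|}{(n-|X|)(n-|Y|)}$; in the extremal all-ones case this bound is $\left(\frac{s+1}{3s+1}\right)^2>\frac19\ge\beta^2$, contradicting the weakly $(n,\beta)$-graph property. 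For the $(n,\beta)$-graph case the stronger Lemma~\ref{L23l}, which permits $X$ and $Y$ to overlap, raises the admissible threshold from $1/3$ to $1/2$. In either case this is the same separation inequality that forces a perfect matching in the even-order conclusions of Theorems~\ref{main1} and~\ref{main2}, now applied with $q=s+1$ and $s\ge 1$.

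The step I expect to be the main obstacle is this balancing. The sizes of $C_1,\dots,C_q$ are uncontrolled, so a single dominant component (whether odd, or even and possibly huge) must be played as $Y$ against the union of all the remaining components, whereas many small components must instead be split evenly; one has to check that the better of these two strategies always exceeds $\beta^2$, and to confirm the extremal regime (all components of size one with $s\to\infty$, where the left-hand side tends to $1/9$) together with the smallest cases such as $s=1$. Absorbing the even components of $G-S$, which enlarge $n$ and a priori weaken the ratio, onto the appropriate side is precisely the bookkeeping that Lemmas~\ref{22l} and~\ref{L23l} are designed to handle.
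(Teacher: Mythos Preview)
Your approach is essentially the paper's---assume a nonempty $S$ with $o(G-S)\ge |S|+1\ge 2$ and derive a contradiction via Lemma~\ref{L23l}---but you take an unnecessary detour and leave the key step as a sketch. The reduction to the exact equality $o(G-S)=|S|+1$ via Theorems~\ref{main1}/\ref{main2}, Berge--Tutte, and parity is superfluous: the paper (citing Gallai's characterization directly rather than rederiving it from Tutte) simply applies Lemma~\ref{L23l} to $c=o(G-S)\ge 2$. For (i), with $\beta\le 1/3$ the lemma gives $|S|>\frac{(c-1)(1-\beta)}{2\beta}\ge c-1$; for (ii), with $\beta\le 1/2$ it gives $c-|S|\le\max\bigl\{\frac{(2\beta-1)n}{1+\beta},0\bigr\}=0$; either contradicts $c\ge |S|+1$ immediately, with no need to know that $c=|S|+1$ exactly. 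The ``balancing obstacle'' you flag in your third step is precisely what Lemma~\ref{L23l}(i) already packages, so for (i) you should invoke it just as you do for (ii), rather than redo the component-splitting analysis by hand.

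One local slip: the first-paragraph claim that ``$o(G-S)\le |S|-1$ is equivalent to $o(G-S)\ne |S|+1$'' does not follow from parity alone (e.g.\ $o(G-S)=|S|+3$ satisfies the second but not the first). The equivalence only holds once you also have the upper bound $o(G-S)\le |S|+1$ from your second paragraph, so the statement is premature where you place it.
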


A {\bf fractional matching} of a graph $G$ is a function $f$ that assigns to each edge of $G$ a real number in $[0,1]$ so that for each vertex $v\in V(G)$, we have $\sum f(e)\le 1$ where the sum is taken over all edges incident to $v$. It is not hard to see that $\sum_{e\in E(G)} f(e)\le |V(G)|/2$ for any fractional matching $f$ of $G$. A fractional matching $f$ of $G$ is called a {\bf fractional perfect matching} if $\sum_{e\in E(G)} f(e) = |V(G)|/2$. The {\bf fractional matching number} $\alpha'_f(G)$ of $G$ is the maximum of $\sum_{e\in E(G)} f(e)$ over all fractional matchings of $G$. Clearly $\alpha'_f(G)\ge\alpha'(G)$ for any graph $G$, however, for bipartite graphs, $\alpha'_f(G) =\alpha'(G)$. More about fractional matching can be found in~\cite[Chapter 2]{SU11}.

Clearly if a graph has a perfect matching, then it has a fractional perfect matching. It is also known that any factor-critical graph has a fractional perfect matching~\cite{CP83}. Thus, by Theorems \ref{main1}, \ref{main2} and \ref{main5}, we have the following corollary.
\begin{corollary}\label{fracpm}
If $G$ is a weakly $(n,\beta)$-graph for $0<\beta\le 1/3$ or an $(n,\beta)$-graph for $0<\beta\le 1/2$, then $G$ has a fractional perfect matching.
\end{corollary}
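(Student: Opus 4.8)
The plan is to prove the corollary by splitting on the parity of $n$, since the two parities are governed by entirely different structural inputs that are already in place. Recall that a fractional perfect matching need only satisfy $\sum_{e\ni v} f(e)\le 1$ at every vertex together with $\sum_{e} f(e)=n/2$; in particular an ordinary perfect matching $M$ yields one immediately by setting $f(e)=1$ for $e\in M$ and $f(e)=0$ otherwise, since then every vertex is saturated exactly once and $\sum_e f(e)=|M|=n/2$. So the whole task reduces to producing, in each parity, either a perfect matching or a factor-critical certificate.

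First suppose $n$ is even. If $G$ is a weakly $(n,\beta)$-graph with $0<\beta\le 1/3$, then Theorem~\ref{main1} already guarantees that $G$ has a perfect matching; if instead $G$ is an $(n,\beta)$-graph with $0<\beta\le 1/2$, the same conclusion follows from Theorem~\ref{main2}. In either case the $0/1$ construction above promotes this perfect matching to a fractional perfect matching, so the even case is essentially a restatement of the two perfect-matching theorems.

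Now suppose $n$ is odd, where a perfect matching cannot exist, so I would instead invoke factor-criticality: under $0<\beta\le 1/3$ for a weakly $(n,\beta)$-graph, Theorem~\ref{main5}(i) shows $G$ is factor-critical, and under $0<\beta\le 1/2$ for an $(n,\beta)$-graph, Theorem~\ref{main5}(ii) does the same. It then remains to pass from factor-criticality to a fractional perfect matching, for which I would quote \cite{CP83}. Concretely, Lov\'asz's odd ear decomposition expresses such a $G$ as an odd cycle $C_0$ together with a sequence of odd ears; assigning weight $1/2$ to every edge of $C_0$ and, for each subsequent odd ear, weight $0$ to its two end edges together with a perfect matching on its internal path---which has an even number of vertices precisely because the ear has odd length---produces a function $f$ saturating every vertex exactly once, i.e.\ a fractional perfect matching.

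The notable feature of this corollary is that it carries essentially no new obstacle: all of the genuine work lives in Theorems~\ref{main1}, \ref{main2} and~\ref{main5}, and the corollary is merely the bookkeeping step that packages them according to parity. The only external ingredient is the passage from factor-critical to fractionally perfectly matchable, and even that is standard. If one preferred a single unified argument avoiding the parity split, one could instead verify the fractional Tutte-type condition $i(G-S)\le|S|$ for every $S\subseteq V(G)$ (where $i$ counts isolated vertices) directly from the separation estimate---this is strictly weaker than the odd-component bound behind the perfect-matching theorems---but the parity split above is shorter and reuses the earlier theorems verbatim.
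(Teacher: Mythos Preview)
Your proposal is correct and follows essentially the same approach as the paper: split on the parity of $n$, invoke Theorems~\ref{main1} and~\ref{main2} for the even case and Theorem~\ref{main5} for the odd case, and then pass from factor-critical to fractionally perfectly matchable via~\cite{CP83}. The paper's own argument is the one-line version of exactly this; your added sketch of the ear-decomposition construction is a harmless elaboration of what~\cite{CP83} supplies.
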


We would also like to mention a fractional version of the Berge-Tutte formula, which will be used later in this section.
\begin{theorem}[The fractional Berge-Tutte Formula~\cite{SU11}]\label{thm:frac}
The fractional matching number of $G$ is $$\alpha'_f(G)=\frac{1}{2} \left( |V(G)| + \min_{S\subseteq V(G)} \big\{|S| -i(G-S)\big\} \right),$$
where $i(G-S)$ is the number of isolated vertices in $G-S$. In particular, $G$ has a fractional perfect matching if and only if $i(G-S)\le |S|$ for every $S\subseteq V(G)$.
\end{theorem}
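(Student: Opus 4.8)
The plan is to prove both inequalities hidden in the identity $\alpha'_f(G)=\frac12\bigl(n+\min_{S}\{|S|-i(G-S)\}\bigr)$ and then read off the ``in particular'' statement. Throughout I write $d_f(v)=\sum_{e\ni v}f(e)$ for the fractional degree of $v$ under a fractional matching $f$, so that $\sum_{e}f(e)=\frac12\sum_{v}d_f(v)$ and $d_f(v)\le 1$ for every vertex $v$.

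First I would establish the upper bound $\alpha'_f(G)\le \frac12\bigl(n+|S|-i(G-S)\bigr)$ for each fixed $S\subseteq V(G)$ by direct counting. Let $I$ be the set of isolated vertices of $G-S$ and let $R=V(G)\setminus(S\cup I)$. Since $I$ is independent and every edge meeting $I$ has its other end in $S$, the weight $\sum_{v\in I}d_f(v)$ equals the weight of the $I$--$S$ edges, which is at most $\sum_{v\in S}d_f(v)\le |S|$. Bounding the $S$-part by $|S|$, the $I$-part by $|S|$, and the $R$-part trivially by $|R|=n-|S|-i(G-S)$, I obtain $2\sum_e f(e)=\sum_v d_f(v)\le n+|S|-i(G-S)$. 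Minimising over $S$ gives $\alpha'_f(G)\le \frac12\bigl(n+\min_S\{|S|-i(G-S)\}\bigr)$.

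For the reverse inequality I would pass to the LP dual. The quantity $\alpha'_f(G)$ is the optimum of $\max\{\mathbf 1^{\top}f:\ f\ge 0,\ d_f(v)\le 1\ \forall v\}$, whose dual is the fractional vertex cover program $\min\{\mathbf 1^{\top}y:\ y\ge 0,\ y_u+y_v\ge 1\ \forall uv\in E(G)\}$; by LP duality the two optima coincide. The key structural input is that the fractional vertex cover polytope is half-integral, so there is an optimal $y$ with every coordinate in $\{0,\tfrac12,1\}$ (classical; see \cite{SU11}). Set $S=\{v:y_v=1\}$, $H=\{v:y_v=\tfrac12\}$ and $Z=\{v:y_v=0\}$. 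The covering constraints force every edge meeting $Z$ to have its other endpoint in $S$ (an edge inside $Z$, or from $Z$ to $H$, would violate $y_u+y_v\ge 1$), so each vertex of $Z$ is isolated in $G-S$ and hence $i(G-S)\ge |Z|$. Since $\alpha'_f(G)=\mathbf 1^{\top}y=|S|+\tfrac12|H|=\tfrac12(n+|S|-|Z|)$ and $|Z|\le i(G-S)$, I get $\alpha'_f(G)\ge \tfrac12\bigl(n+|S|-i(G-S)\bigr)\ge \tfrac12\bigl(n+\min_S\{|S|-i(G-S)\}\bigr)$, which together with the upper bound gives the formula.

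Finally the ``in particular'' claim follows by inspection: $G$ has a fractional perfect matching exactly when $\alpha'_f(G)=n/2$, i.e. when $\min_S\{|S|-i(G-S)\}=0$; the choice $S=\emptyset$ already forces this minimum to be at most $-i(G)\le 0$, so it equals $0$ precisely when $|S|-i(G-S)\ge 0$ for every $S$, that is, $i(G-S)\le |S|$ for all $S\subseteq V(G)$. I expect the only genuinely nontrivial ingredient to be the half-integrality of the dual fractional vertex cover polytope; everything else is elementary counting plus strong LP duality. An alternative that avoids invoking half-integrality would be to build the optimal $S$ through a fractional Gallai--Edmonds decomposition, but that route looks substantially more delicate, so reading $S$ off a half-integral dual optimum seems the cleanest path.
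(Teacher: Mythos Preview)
Your argument is correct. The upper bound via counting fractional degrees on $S$, $I$, and $R$ is clean, and the lower bound via LP duality together with the half-integrality of an optimal fractional vertex cover is the standard route; your extraction of $S=\{v:y_v=1\}$ and the observation that $Z=\{v:y_v=0\}$ consists of vertices isolated in $G-S$ are exactly right. The deduction of the ``in particular'' statement is also fine.

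As for comparison with the paper: there is nothing to compare. The paper does not prove this theorem at all; it is quoted as a known result from \cite{SU11} and used as a black box (in the proof of Theorem~\ref{newt3} and in the remarks around Corollary~\ref{fracpm}). So you have supplied a full proof where the paper simply cites one. The only external ingredient you invoke is the half-integrality of the fractional vertex cover polytope, which is indeed classical (and proved in \cite{SU11}); if you wanted a self-contained write-up you could sketch the standard rounding argument (move coordinates in $(0,\tfrac12)$ down and the complementary ones in $(\tfrac12,1)$ up along a covering-preserving direction without increasing the objective), but citing it as you do is entirely appropriate here.
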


We will present the proofs of these theorems and some applications in spectral graph theory in the next two subsections, respectively.

\subsection{Proofs of the theorems}
The following lemma was shown in~\cite{GL22} for a specific value of $\beta$ involving Laplacian eigenvalues, however, the proof in~\cite{GL22} actually indicates a more general combinatorial result as below. For completeness, we include a proof.
\begin{lemma}\label{22l}
Suppose that $S\subset V(G)$ such that $G-S$ is disconnected. Let $X$ and $Y$ be disjoint vertex subsets of $G-S$ such that $X\cup Y= V(G)-S$ with $|X|\le |Y|$. If $|X| |Y| \le \beta^2 (n-|X|)(n-|Y|)$ with $\beta>0$, then
\begin{equation}
\label{xupp}
|X|\le \frac{\beta n}{1+\beta},
\end{equation}
and
\begin{equation}
\label{ssiz}
|S| \ge \frac{1 -\beta}{\beta} |X|,
\end{equation}
with each equality holding  only when $|X|=|Y|$.
\end{lemma}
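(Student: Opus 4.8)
The unifying idea I would use is this: the hypothesis can be read as $\dfrac{|X||Y|}{(n-|X|)(n-|Y|)}\le\beta^2$, and both conclusions amount to bounding one particular ratio by $\beta$. In each case I would factor the hypothesis ratio as a product of two terms, observe that the term I want to bound is the \emph{smaller} of the two, and conclude that it is at most the geometric mean of the two, i.e.\ at most $\beta$. Write $x=|X|$, $y=|Y|$, $s=|S|$, so that $x+y+s=n$ and $x\le y$, and note $n-x=y+s$ and $n-y=x+s$ (I may assume $X\ne\varnothing$, the empty case being trivial, so all denominators are positive). Clearing denominators shows that \eqref{xupp} is equivalent to $\frac{x}{n-x}\le\beta$, while \eqref{ssiz} is equivalent to $\frac{x}{n-y}\le\beta$ (using $n-y=x+s$); for $\beta\ge1$ the right-hand side of \eqref{ssiz} is nonpositive and the claim is trivial, so when proving \eqref{ssiz} I assume $0<\beta<1$.

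For \eqref{xupp} I would use that $t\mapsto\frac{t}{n-t}$ is strictly increasing on $[0,n)$. Since $x\le y$ this gives $\frac{x}{n-x}\le\frac{y}{n-y}$, hence $\left(\frac{x}{n-x}\right)^2\le\frac{x}{n-x}\cdot\frac{y}{n-y}=\frac{xy}{(n-x)(n-y)}\le\beta^2$, and taking square roots yields $\frac{x}{n-x}\le\beta$, which is exactly \eqref{xupp}. Equality forces $\frac{x}{n-x}=\frac{y}{n-y}$, and by strict monotonicity this happens only when $x=y$.

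For \eqref{ssiz} the one genuinely nontrivial step is the inequality $x(n-x)\le y(n-y)$, equivalently $\frac{x}{n-y}\le\frac{y}{n-x}$. I would prove it from the factorization $x(n-x)-y(n-y)=(x-y)\big(n-(x+y)\big)$: the factor $x-y$ is nonpositive since $x\le y$, and the factor $n-(x+y)=s$ is nonnegative, so the product is $\le0$. Then, exactly as before, $\left(\frac{x}{n-y}\right)^2\le\frac{x}{n-y}\cdot\frac{y}{n-x}=\frac{xy}{(n-x)(n-y)}\le\beta^2$, giving $\frac{x}{n-y}\le\beta$ and hence \eqref{ssiz}. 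Equality would require both $x(n-x)=y(n-y)$ and $\frac{xy}{(n-x)(n-y)}=\beta^2$; the first gives $(x-y)s=0$, and if $x\ne y$ then $s=0$, whence $n-x=y$, $n-y=x$ and the hypothesis ratio equals $1>\beta^2$, a contradiction. Thus equality forces $x=y$.

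I expect the only delicate point to be the comparison $x(n-x)\le y(n-y)$ (and correctly identifying $\frac{x}{n-y}$ as the smaller factor for \eqref{ssiz}), together with the bookkeeping for equality in the boundary regimes $\beta\ge1$ and $s=0$. Once the ``bound the smaller factor by the geometric mean'' viewpoint is in place, each of the two substantive inequalities is a two-line computation, and the whole argument is purely arithmetic, using only $x+y+s=n$, $x\le y$, and the displayed hypothesis $|X||Y|\le\beta^2(n-|X|)(n-|Y|)$.
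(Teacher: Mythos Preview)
Your proof is correct. For \eqref{xupp} your argument is essentially identical to the paper's: both amount to $|X|^2\le |X||Y|\le\beta^2(n-|X|)(n-|Y|)\le\beta^2(n-|X|)^2$, you simply phrase it via the monotone map $t\mapsto t/(n-t)$.

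For \eqref{ssiz} the two arguments diverge. The paper substitutes $|Y|=n-|S|-|X|$ into the hypothesis, expands to get $|X|n\le\big(\beta^2 n+(1-\beta^2)|X|\big)(|S|+|X|)$, and then \emph{uses the already-proved bound \eqref{xupp}} to replace $(1-\beta^2)|X|$ by $(\beta-\beta^2)n$, which collapses the right side to $\beta n(|S|+|X|)$. Your route instead rewrites \eqref{ssiz} directly as $\dfrac{x}{n-y}\le\beta$ and compares $\dfrac{x}{n-y}$ with $\dfrac{y}{n-x}$ via the identity $x(n-x)-y(n-y)=(x-y)s$; the same ``smaller factor $\le$ geometric mean'' trick then finishes. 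The payoff is that your proof of \eqref{ssiz} is independent of \eqref{xupp}, treats the two inequalities in a perfectly parallel fashion, and gives a self-contained equality analysis (the paper's equality claim for \eqref{ssiz} is inherited from having invoked \eqref{xupp}). The paper's approach, on the other hand, is a more mechanical algebraic expansion that avoids spotting the factorization $(x-y)s$. Both are short and elementary; yours is the more symmetric of the two.
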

\begin{proof}
Since $|X|\le |Y|$, we have
$$|X|^2 \le |X|\cdot |Y| \le \beta^2 (n-|X|)(n-|Y|)\le \beta^2 (n-|X|)^2,$$
that is $$|X| \le \beta (n - |X|),$$ and hence
\begin{equation}
\label{betax}
|X|\le \frac{\beta n}{1+\beta},
\end{equation}
with the equality holding  only when $|X|=|Y|$.

\par\medskip
Notice that \eqref{ssiz} is trivial if $\beta\ge 1$. Thus we may assume that $0<\beta<1$. Since $|Y| = n - |S| - |X|$, we have
$$ |X| (n - |S| - |X|) = |X|\cdot |Y| \le \beta^2 (n - |X|)(n - |Y|) = \beta^2 (n - |X|)(|S| + |X|),$$
implying that
\begin{equation}
\label{xnbo}
|X| n \le \left(\beta^2 (n - |X|) + |X| \right) \left(|S| + |X| \right)
= \left(\beta^2 n + (1-\beta^2)|X| \right) \left(|S| + |X| \right).
\end{equation}
By~\eqref{betax}, we have
$$(1-\beta^2)|X|\le (1-\beta^2) \cdot \frac{\beta n}{1+\beta} = (\beta - \beta^2) n,$$
which, together with \eqref{xnbo}, implies that
$$|X|n \le \left(\beta^2 n + (\beta - \beta^2) n \right) \left(|S| + |X| \right) = \beta n \left(|S| + |X| \right),$$
and we have
$$ |X| \le \beta \left(|S| + |X| \right).$$
Hence,
\begin{equation*}
|S| \ge \frac{1 -\beta}{\beta} |X|.
\end{equation*}
Since~\eqref{betax} was utilized, the equality holds in~\eqref{ssiz} only when $|X|=|Y|$.
\end{proof}

\begin{lemma}\label{L23l}
Let $S$ be a subset of $V(G)$ such that $G-S$ contains at least $c$ components, where $c\ge 2$.
\par\noindent
{\rm (i)} If $G$ is a weakly $(n,\beta)$-graph and $0<\beta<1$, then $$|S|>\frac{(c-1)(1-\beta)}{2\beta}\,\,\,\,\text{and}\,\,\,\, c-|S|<\max\left\{\frac{(3\beta-1)n +2(1-\beta)}{\beta+1},0\right\}.$$
\par\noindent
{\rm (ii)} If $G$ is an $(n,\beta)$-graph, then $$c\le \frac{\beta n}{1+\beta}\,\,\,\, \text{and}\,\,\,\, c-|S|\le \max\left\{\frac{2\beta-1}{1+\beta}\cdot n,0\right\}.$$
Furthermore, if $G-S$ contains $c-1$ isolated vertices and $|V(G)-S|\ge 2\beta n/(1+\beta)$, then
$$|S|>\frac{2(c-1)}{\beta(1+\beta)}\ge \frac{c}{\beta(1+\beta)}.$$
\end{lemma}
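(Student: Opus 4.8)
The plan is to reduce both parts to the two–set separation estimate of Lemma~\ref{22l}, the only difference being how the sets $X,Y$ are extracted from the components of $G-S$. Write $C_1,\dots,C_m$ for these components, so $m\ge c\ge 2$, and order them by size $|C_1|\le\cdots\le|C_m|$. In the weakly case (i) I am forced to use \emph{disjoint} blocks, since \eqref{Liu3e} for a weakly $(n,\beta)$-graph only controls disjoint $X,Y$; in the $(n,\beta)$ case (ii) I would instead exploit that $X,Y$ need not be disjoint and let them overlap on an independent transversal of the components.

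For the first inequality of (i), set $X=C_1\cup\cdots\cup C_{\lfloor m/2\rfloor}$ (the smallest half of the components) and $Y=V(G)-S-X$. Each component in $X$ is no larger than each component in $Y$ and $Y$ receives at least as many components, so $|X|\le|Y|$; as $X,Y$ are unions of distinct components there is no edge between them, and \eqref{Liu3e} supplies the hypothesis $|X||Y|\le\beta^2(n-|X|)(n-|Y|)$ of Lemma~\ref{22l}. That lemma gives $|S|\ge\frac{1-\beta}{\beta}|X|$ via \eqref{ssiz}. Since each component has at least one vertex, $|X|\ge\lfloor m/2\rfloor\ge\lfloor c/2\rfloor\ge\frac{c-1}{2}$, whence $|S|\ge\frac{(c-1)(1-\beta)}{2\beta}$. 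To make this strict I would invoke the equality clause of Lemma~\ref{22l}: equality needs $|X|=|Y|$, but in the only borderline situation ($c$ odd, $m=c$, and $|X|=\frac{c-1}{2}$ realized by $\frac{c-1}{2}$ singletons) the block $Y$ holds $\frac{c+1}{2}$ components and hence strictly more vertices, so $|X|<|Y|$ and the inequality is strict.

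For (ii), let $I$ contain one vertex from each component, an independent set of size $c$ with $e(I,I)=0$. Applying the $(n,\beta)$-inequality with $X=Y=I$ gives $\frac{c^2}{(n-c)^2}\le\beta^2$, i.e. $c\le\frac{\beta n}{1+\beta}$. For the ``furthermore'' I would take $I$ to be the $c-1$ isolated vertices of $G-S$ and set $X=I$, $Y=V(G)-S$; this is legitimate for an $(n,\beta)$-graph even though $X\subseteq Y$, and because the vertices of $I$ are isolated in $G-S$ all their neighbours lie in $S$, so $e(X,Y)=0$. Here $n-|Y|=|S|$ and $n-|X|=n-c+1$, so the inequality reads $(c-1)(n-|S|)\le\beta^2(n-c+1)|S|<\beta^2 n|S|$, the last step strict since $c\ge2$. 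Substituting $n-|S|\ge\frac{2\beta n}{1+\beta}$ and cancelling $\beta n$ yields $|S|>\frac{2(c-1)}{\beta(1+\beta)}$, and $\frac{2(c-1)}{\beta(1+\beta)}\ge\frac{c}{\beta(1+\beta)}$ is merely $2(c-1)\ge c$.

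It remains to bound $c-|S|$, and this is where I expect the real work. Besides the estimates above, one always has the trivial bound $c\le n-|S|$. For (i) I would combine $c\le n-|S|$ with the first inequality rewritten as $c<\frac{2\beta}{1-\beta}|S|+1$; when $\beta\ge\frac13$ the convex combination of these two bounds with weight $\theta=\frac{3\beta-1}{1+\beta}$ on the former eliminates $|S|$ and delivers exactly $c-|S|<\frac{(3\beta-1)n+2(1-\beta)}{1+\beta}$, with the analogous threshold argument (now using $c\le\frac{\beta n}{1+\beta}$) giving the bound in (ii) when $\beta\ge\frac12$. The \textbf{main obstacle} is the complementary regime $\beta<\frac13$ (resp. $\beta<\frac12$), where the stated maximum is $0$ and one must establish $c\le|S|$ — essentially toughness at least $1$. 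The weakened consequence $|S|\ge\frac{1-\beta}{\beta}|X|$ handles all but a few small-$c$ configurations in which the block $X$ is a single small component, and for those the packaged form of Lemma~\ref{22l} loses too much; there I would return to \eqref{Liu3e} applied directly to the smallest component $C_1$ against its complement, which forces $|S|$ to grow like $\beta^{-2}$ and eliminates these cases once $n$ is large. Reconciling this boundary case with the integrality and the exact strict-versus-nonstrict form of the two $c-|S|$ bounds is the crux of the argument.
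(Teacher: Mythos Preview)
Your treatment of the first inequality of (i), the first inequality of (ii), and the ``furthermore'' clause of (ii) is correct and essentially identical to the paper's. Your convex-combination trick for the $c-|S|$ bound in (i) when $\beta\ge\tfrac13$ is also valid and is in fact tidier than the paper's computation in that range.

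The genuine gap is exactly where you flag it, and your proposed fix does not close it. First, the assertion that for $\beta<\tfrac13$ ``the stated maximum is $0$'' is simply false for small $n$ (e.g.\ $n=2$ gives $\frac{4\beta}{1+\beta}>0$), so your case split is not the right one. More importantly, in the subrange where the maximum really is $0$ you must show $c<|S|$ strictly; your first inequality only yields $|S|\ge c$ after rounding, and the vague plan ``apply \eqref{Liu3e} to $C_1$ against its complement, forcing $|S|\sim\beta^{-2}$ once $n$ is large'' does not work---the lemma must hold for every $n$, and the single-component estimate gives nothing useful when $|C_1|$ is small. The same difficulty recurs in (ii) for $\beta<\tfrac12$, where your ``analogous threshold argument'' is never actually specified.

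The ingredient you are missing, and which the paper uses, is a balanced-partition step. The paper separates off the case $|V(G)-S|=c$ (all components singletons; then $|S|=n-c$, and your first inequality of (i) alone already yields $c-|S|<\frac{(3\beta-1)n+2(1-\beta)}{\beta+1}$ by direct substitution, uniformly in $\beta$). In the remaining case $|V(G)-S|\ge c+1$ it cites a combinatorial lemma from~\cite{GL22} guaranteeing a bipartition $X\cup Y=V(G)-S$ with $e(X,Y)=0$ and $|Y|\ge|X|\ge c/2$---not merely $(c-1)/2$. With $c\le 2|X|$ in hand, Lemma~\ref{22l} gives $c-|S|\le\frac{3\beta-1}{\beta}|X|$, which together with \eqref{xupp} delivers the bound for all $\beta\in(0,1)$ simultaneously (when $\beta<\tfrac13$ the right side is negative and the assumption $c-|S|\ge0$ is immediately contradicted). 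Part (ii) is structured the same way, now quoting a sharper partition lemma from~\cite{Gu21b} with $|X|\ge c$ and using the ``furthermore'' clause to handle the isolated-vertices sub-case. The improvement from $(c-1)/2$ to $c/2$ looks innocuous but is precisely what makes the argument uniform in $\beta$; you need to obtain such a balanced partition rather than try to patch small-$c$ configurations one at a time.
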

\begin{proof}
Let $D_1, D_2, \ldots, D_c$ be the vertex sets of the $c$ components of $G-S$. Without loss of generality, we may assume that $|D_1|\le |D_2| \le \cdots \le |D_c|$.

(i): % Clearly we may assume that $0<\beta<1$.
   Define $X =\bigcup_{1\le i\le \lfloor c/2\rfloor} D_i$ and $Y = V(G) - S -X$. Then $\frac{c-1}{2} \le |X|\le |Y|$ and $e(X,Y)=0$. By~\eqref{ssiz} and $|X|\geq \frac{c-1}{2}$, we have
\begin{equation}\label{fc2}
|S| \ge \frac{1 -\beta}{\beta} |X|\ge \frac{(1-\beta)(c-1)}{2\beta},
\end{equation}
with equality holding only when $\frac{c-1}{2}=|X|=|Y|$ by Lemma~\ref{22l}. However, if $c$ is even, then $\frac{c-1}{2}<\frac{c}{2}\le |X|$; and
if $c$ is odd, then $|X| < |Y|$ by definitions of $X$ and $Y$. Thus the equality never hold in~\eqref{fc2}.

Next, we show the upper bound for $c-|S|$. It suffices to show that
\begin{equation}\label{ne1}
\text{ if $c-|S| \ge 0$, then $c-|S|<\frac{(3\beta-1)n +2(1-\beta)}{\beta+1}$}.
\end{equation}

We first show that if $|V(G) -S| = c$, then \eqref{ne1} holds. In this case, each $D_i$ is an isolated vertex for $1\le i\le c$. By \eqref{fc2},
$$ n-c=|S|>\frac{(c-1)(1-\beta)}{2\beta},$$
% Define %$X=\bigcup_{1\leq i\leq %\lfloor c/2\rfloor}O_i$ %and $Y=V(G)-S-X$. By %\eqref{ssiz},
%\begin{equation*}
%n-c= |S| \ge \frac{1 %-\beta}{\beta}\cdot %|X|\geq  %\frac{(c-1)(1-\beta)}{2%\beta},
%\end{equation*}
and thus
\begin{equation*}%\label{ne}
c<\frac{\beta(2n-1)+1}{\beta+1}.
\end{equation*}
This implies that
\begin{equation*}
c-|S| =2c-n <\frac{2\beta(2n-1)+2}{\beta+1}-n=\frac{(3\beta-1)n +2(1-\beta)}{\beta+1},
\end{equation*}
and so \eqref{ne1} holds.

Thus, we may assume that $|V(G)-S| \ge c+1$ in the following. It was proved in \cite{GL22} that if $|V(G) -S|\ge c+1$, then $V(G)-S$ can be partitioned into two disjoint sets $X$ and $Y$ such that $e(X, Y)=0$ and $|Y|\ge |X|\ge c/2$. It follows that $c\le 2|X|$. Thus, by~\eqref{ssiz},
\begin{equation*}
c -|S| \le 2|X| -|S| \le 2|X| - \frac{1-\beta}{\beta} |X| =  \frac{3\beta-1}{\beta}|X|,
\end{equation*}
and thus \eqref{xupp} implies that
\begin{equation}\label{nex1}
0\le c -|S| \le \frac{3\beta-1}{\beta}|X|\le \frac{3\beta-1}{\beta} \cdot \frac{\beta}{1+\beta}\cdot n= \frac{3\beta-1}{1+\beta}\cdot n <\frac{(3\beta-1)n +2(1-\beta)}{\beta+1}.
\end{equation}

(ii): Let $U$ be a set of vertices that consists of exactly one vertex from each $D_i$ for $i=1,2,\ldots,c$. By the definition of $(n,\beta)$-graphs,
$|U|^2 \le \beta^2 (n-|U|)^2$. It follows that $|U|\le \beta(n-|U|)$ and so \begin{equation}\label{cceven}c=|U|\le \frac{\beta n}{1+\beta}.\end{equation}

Next, we show the upper bound on $c-|S|$. It suffices to show that
\begin{equation}\label{ne2}
\text{ if $c-|S| >0$, then $c-|S|\le \frac{2\beta -1}{1+\beta}\cdot n$.}
\end{equation}

Since $G$ is an $(n,\beta)$-graph, $c\le\frac{\beta n}{1+\beta}$ by~\eqref{cceven}. If $|V(G) -S|\le \frac{2\beta n}{1+\beta}$, then $|S|\ge \frac{1-\beta}{1+\beta}\cdot n$. Thus
\begin{equation*}
c -|S|\le \frac{\beta n}{1+\beta} - \frac{1-\beta}{1+\beta}\cdot n =\frac{2\beta -1}{1+\beta}\cdot n,
\end{equation*}
and so \eqref{ne2} holds. Therefore, we can assume that $|V(G) -S| > \frac{2\beta n}{1+\beta}\ge 2c$ in the following, and so $|V(G) -S|\ge 2c+1$.

{\bf Case 1:}  $\sum_{i=1}^{c-1} |D_i| = c-1$. Then each $D_i$ is a single vertex for $i=1,2,\ldots, c-1$.  Let $X= \bigcup_{i=1}^{c-1} D_i$ and $Y = V(G) -S$. Then $|X|=c-1$ and $e(X,Y)= 0$. By definition of $(n,\beta)$-graphs,
$$\frac{|X| |Y|}{(n-|X|)(n-|Y|)} \le \beta^2 \text{ and thus } \frac{|X||Y|}{\beta^2}\le (n-|X|)(n-|Y|)< n(n-|Y|)=n|S|.$$
It follows that
$$|S|> \frac{|X||Y|}{n\beta^2} = \frac{|X|}{\beta^2}\cdot \frac{|Y|}{n} \ge \frac{c-1}{\beta^2}\cdot \frac{2\beta}{1+\beta} = \frac{2(c-1)}{\beta(1+\beta)}\ge \frac{c}{\beta(1+\beta)},$$ and thus \eqref{cceven} implies that
$$c-|S|< c- \frac{c}{\beta(1+\beta)} = \frac{\beta^2+\beta-1}{\beta(1+\beta)}\cdot c\le \frac{\beta^2+\beta-1}{\beta(1+\beta)}\cdot \frac{\beta n}{1+\beta} \le\frac{2\beta^2+\beta-1}{(1+\beta)^2}\cdot n = \frac{2\beta -1}{1+\beta}\cdot n,$$
as required.

 \medskip {\bf Case 2:} $\sum_{i=1}^{c-1} |D_i|\ge c$.  Let $V_i =D_i$ for $i=1,2,\ldots,c-1$ and $V_c=D-X-S$. Thus $\sum_{i=1}^{c-1} |V_i|\ge c$ and $V(G)-S=\bigcup_{i=1}^{c} V_i$.

It was proved in \cite{Gu21b} that if $|V(G) -S|\ge 2c+1$ and $\sum_{i=1}^{c-1} |V_i|\ge c$, then $V_1,V_2,\ldots, V_c$ can be partitioned into two disjoint sets $X$ and $Y$ such that $e(X, Y)=0$ and  $|Y|\ge |X|\ge c$.
Thus, by~\eqref{ssiz},
\begin{equation}\label{eq:nb}
0 < c -|S| \le |X| -|S| \le |X| - \frac{1-\beta}{\beta} |X| =  \frac{2\beta-1}{\beta}|X|.
\end{equation}

By~\eqref{xupp} and~\eqref{eq:nb},
$$0< c -|S| \le  \frac{2\beta-1}{\beta}|X|\le \frac{2\beta-1}{\beta} \cdot \frac{\beta}{1+\beta}\cdot n= \frac{2\beta-1}{1+\beta}\cdot n,$$
 and thus  \eqref{ne2} also holds.
 \end{proof}
%Now, we show the furthermore part. Since $G-S$ contains $c-1$ isolated vertices,  each $D_i$ is a single vertex for $i=1,2,\ldots, c-1$. Let $X= \bigcup_{i=1}^{c-1} D_i$ and $Y = V(G)-S$. Then $|X|=c-1$ and $e(X,Y)= 0$. By definition of $(n,\beta)$-graphs,
%$$\frac{|X||Y|}{\beta^2}\le (n-|X|)(n-|Y|)< n(n-|Y|)=n|S|,$$
%and thus $$|S|> \frac{|X||Y|}{n\beta^2} = \frac{|X|}{\beta^2}\cdot \frac{|Y|}{n}\ge\frac{c-1}{\beta^2}\cdot \frac{2\beta}{1+\beta} = \frac{2(c-1)}{\beta(1+\beta)}\ge \frac{c}{\beta(1+\beta)}.$$
%Therefore
%$$c-|S|< c- \frac{c}{\beta(1+\beta)} = \frac{\beta^2+\beta-1}{\beta(1+\beta)}\cdot c\le \frac{\beta^2+\beta-1}{\beta(1+\beta)}\cdot \frac{\beta n}{1+\beta} \le\frac{2\beta^2+\beta-1}{(1+\beta)^2}\cdot n = \frac{2\beta -1}{1+\beta}\cdot n,$$
%as desired.

\begin{proof}[\bf Proof of Theorem~\ref{main1}]
Let $r=\min\left\{\frac{1-\beta}{1+\beta},\, \frac{1}{2}\right\}$. The result is trivial for $\beta\ge 1$, and so we may assume that $0<\beta<1$.
By Theorem~\ref{btfo}, it suffices to show that for every $S\subseteq V(G)$,
$$|S| - o(G-S)\ge (2r-1)n -2r,$$ i.e. $$o(G-S) - |S|\le (1-2r)n +2r.$$

If $o(G-S)\le 1$, then $o(G-S) - |S| \le 1- |S| \le 1 \le (1-2r)n +2r$, since $0\le1-2r\le (1-2r)n$ as $r\le 1/2$. Furthermore, the result clearly holds for $o(G-S)-|S|\le 0$. Thus we may assume that $c=o(G-S)\ge 2$ and $c-|S|=o(G)-|S|>0$.
Since $r\le \frac{1-\beta}{1+\beta}$, by Lemma~\ref{L23l}, we have
\begin{equation*}
0 < o(G-S)-|S| = c-|S| \le \frac{(3\beta-1)n +2(1-\beta)}{\beta+1}\le (1-2r)n +2r,
\end{equation*}
as desired.
\end{proof}

\begin{proof}[\bf Proof of Theorem~\ref{main2}]
Let $r= \min\left\{\frac{2-\beta}{2(1+\beta)},\, \frac{1}{2}\right\}$.
By Theorem~\ref{btfo}, it suffices to show that $$o(G-S) -|S|\le (1-2r)n +2r$$
for every $S\subseteq V(G)$.

If $o(G-S)\le 1$, then $o(G-S) -|S| \le 1- |S| \le 1 \le (1-2r)n +2r$, since $1-2r\le (1-2r)n$ as $r\le 1/2$. Thus we may assume that $c=o(G-S)\ge 2$ and $c-|S|>0$.
Since $r\le \frac{2-\beta}{2(1+\beta)}$,
\begin{equation*}
0<o(G-S)-|S| = c -|S|\le  \frac{2\beta -1}{1+\beta}\cdot n \le (1-2r)n,
\end{equation*}
by Lemma~\ref{L23l}.
\end{proof}

\begin{proof}[\bf Proof of Theorem~\ref{main3}]
Let $r=\min\{t(1-2\beta^2),1\}$. Obviously we can assume that $1-2\beta^2>0$.
Let $|U|=n_1$ and $|W|=n_2$, and so $n_2\ge tn_1$. By Theorem~\ref{hall}, it suffices to show that for every $S\subseteq U$, $$|S|-|N(S)|\le (1-r)n_1.$$
It is trivial if either $|S|\le (1-r)n_1$ or $|S|\le |N(S)|$ since $r\le 1$. Thus we may assume that $|S|>(1-r)n_1$ and $|S|>|N(S)|$.

By contradiction, we suppose that there exists an $S\subseteq U$ such that $|S|-|N(S)| > (1-r)n_1$. Let $|S|=x$ and $|N(S)|=y$, and so $x-y> (1-r)n_1$. Choose $X=S$ and $Y=V(G)\setminus (S\cup N(S))$. Clearly $X,Y$ are disjoint and there is no edge between $X$ and $Y$. By the definition of weakly $(n,\beta)$-graphs, we have
$$\frac{1}{2}\left(1-\frac{y}{n_1+n_2-x}\right)<\frac{x(n_1+n_2-x-y)}{(x+y)(n_1+n_2-x)}\le   \beta^2,$$ as  $y<x$.
This implies that  $y> (n_1+n_2-x)(1-2\beta^2).$

Now, we have $(1-r)n_1<x-y<x-(n_1+n_2-x)(1-2\beta^2)\le 2(1-\beta^2)n_1-(n_1+n_2)(1-2\beta^2)\le 2(1-\beta^2)n_1-(1+t)n_1(1-2\beta^2)$, which implies that
$r>t(1-2\beta^2)$, a contradiction.
\end{proof}

\begin{proof}[\bf Proof of Theorem~\ref{main4}]
Let $r=\min\{1/\beta^{2},1\}$. Let $|U|=n_1$ and $|W|=n_2$, and so $n_2\ge n_1$. By Theorem~\ref{hall}, it suffices to show that for every $S\subseteq U$,   $$|S|-|N(S)|\le (1-r)n_1.$$
This is trivial if either $|S|\le (1-r)n_1$ or $|S|\le |N(S)|$ since $r\le 1$. Thus we may assume that $|S|>(1-r)n_1$ and $|S|>|N(S)|$.

By contradiction, we suppose that there exists an $S\subseteq U$ such that $|S|-|N(S)| > (1-r)n_1$. Let $|S|=x$ and $|N(S)|=y$, and so $x-y> (1-r)n_1$. Since $G$ has no isolated vertices, $N(S)$ is nonempty.
Choose $X=S$ and $Y=U\cup (W\setminus N(S))$. Then there is no edge between $X$ and $Y$. By the definition of $(n,\beta)$-graphs, we have
$$\frac{x}{y}<\frac{x(n_1+n_2-y)}{y(n_1+n_2-x)}\le \beta^2,$$ as $y<x$.
This implies that $y>x/\beta^2.$

Now, since $r\le 1$, we have
$$0\le (1-r)n_1<x-y<\left(1-\frac{1}{\beta^2}\right)x\le\left(1-\frac{1}{\beta^2}\right)n_1,$$ which implies that $r>1/\beta^{2}$, a contradiction.
%This completes the proof.
\end{proof}

%\subsection{Proof of Theorem~\ref{main5}}
\begin{proof}[\bf Proof of Theorem~\ref{main5}]
Gallai~\cite{Gall63} proved that $G$ is factor-critical if and only if $|V(G)|$ is odd and $o(G-S)\le |S|$ for every nonempty subset $S\subseteq V(G)$.  Suppose to the contrary that $G$ is not factor-critical. Then there exists a nonempty subset $S\subseteq V(G)$ such that
\begin{equation}
\label{fc1}
o(G-S)\ge |S| +1 \ge 2.
\end{equation}
\par
(i) Let $O_1, O_2, \ldots, O_c$ be the vertex sets of the odd components of $G-S$, where $c=o(G-S)$.
%Without loss of generality, suppose that $|O_1|\le |O_2| \le \cdots \le |O_c|$.
%Define $X =\bigcup_{1\le i\le \lfloor c/2\rfloor} O_i$ and $Y = V(G) - S -X$. Then $\frac{c-1}{2} \le |X|\le |Y|$.
%By (\ref{ssiz}) and $|X|\geq \frac{c-1}{2}$, we have
%\begin{equation}
%\label{fc2}
%|S| \ge \frac{1 -\beta}{\beta} |X|\ge \frac{(1-\beta)(c-1)}{2\beta},
%\end{equation}
%with equality holding only when $\frac{c-1}{2}=|X|=|Y|$. However, if $c$ is even, then $\frac{c-1}{2}<\frac{c}{2}\le |X|$; and
%if $c$ is odd, then $|X| < |Y|$ by definitions of $X$ and $Y$. Thus the equality never hold in (\ref{fc2}). Since $\beta\le \frac{1}{3}$, from (\ref{fc2}) we can deduce that
Since $\beta\le \frac{1}{3}$, by Lemma~\ref{L23l},
\begin{equation*}
|S| > \frac{(1-\beta)(c-1)}{2\beta}  \ge c-1 = o(G-S) -1,
\end{equation*}
contrary to~\eqref{fc1}.

\medskip
(ii) Since $c=o(G-S)\ge 2$, $c-|S|=o(G-S)-|S|>0$ and $\beta\le 1/2$, by Lemma~\ref{L23l}, we have \begin{equation*}
o(G-S)-|S| = c -|S|\le \frac{2\beta -1}{1+\beta}\cdot n\le 0,
\end{equation*}
contrary with \eqref{fc1}. This completes the proof.
\end{proof}

\subsection{Lower bounds on matching number via eigenvalues}
Matching and matching number have been studied, among others, in~\cite{BrHa05, Cioa05, CiGr07, CiGH09, KrSu06, OCi10} for regular graphs by means of the second or the third largest adjacency eigenvalues and in~\cite{BrHa05, GL22} from Laplacian eigenvalues. As shown in Proposition~\ref{prop:ex}, every graph is a weakly $(n,\beta)$-graph, where $\beta = \frac{\mu_n - \mu_2}{\mu_n + \mu_2}$. Thus Theorems~\ref{main1} and~\ref{main5} and Corollary~\ref{fracpm} imply the following results originally discovered in~\cite{BrHa05,GL22,XueZS}.

\begin{corollary}[\cite{GL22}]\label{cor:matnum}
For any graph $G$ with $n$ vertices, $\displaystyle \alpha'(G)\ge \min\left\{\Big\lceil\frac{\mu_2}{\mu_n} (n -1)\Big\rceil,\ \ \Big\lceil\frac{1}{2}(n-1)\Big\rceil \right\}.$
\end{corollary}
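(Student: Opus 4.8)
The plan is to obtain this as an immediate specialization of Theorem~\ref{main1} through Proposition~\ref{prop:ex}(ii). First I would invoke Proposition~\ref{prop:ex}(ii), which guarantees that every graph $G$ on $n$ vertices is a weakly $(n,\beta)$-graph with $\beta=\frac{\mu_n-\mu_2}{\mu_n+\mu_2}$. Feeding this single value of $\beta$ into Theorem~\ref{main1} reduces the whole statement to a short algebraic simplification followed by a rounding argument.

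The key computation is to evaluate $\frac{1-\beta}{1+\beta}$ at $\beta=\frac{\mu_n-\mu_2}{\mu_n+\mu_2}$. Clearing the common denominator $\mu_n+\mu_2$ gives $1-\beta=\frac{2\mu_2}{\mu_n+\mu_2}$ and $1+\beta=\frac{2\mu_n}{\mu_n+\mu_2}$, so that $\frac{1-\beta}{1+\beta}=\frac{\mu_2}{\mu_n}$. Substituting this into the bound of Theorem~\ref{main1} yields the real-valued estimate $\alpha'(G)\ge \min\left\{\frac{\mu_2}{\mu_n},\,\frac{1}{2}\right\}(n-1)$.

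Finally, since $\alpha'(G)$ is a nonnegative integer, any real lower bound may be sharpened to its ceiling. Because the ceiling function is monotone nondecreasing, it commutes with the minimum, i.e. $\lceil\min\{a,b\}\rceil=\min\{\lceil a\rceil,\lceil b\rceil\}$; applying this with $a=\frac{\mu_2}{\mu_n}(n-1)$ and $b=\frac{1}{2}(n-1)$ converts the bound into the claimed $\min\left\{\lceil\frac{\mu_2}{\mu_n}(n-1)\rceil,\ \lceil\frac{1}{2}(n-1)\rceil\right\}$.

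There is essentially no genuine obstacle here; the result is a direct corollary, and the only two points deserving a line of justification are the algebraic identity $\frac{1-\beta}{1+\beta}=\frac{\mu_2}{\mu_n}$ and the interchange of ceiling with minimum. The sole degenerate case worth a remark is a disconnected $G$, where $\mu_2=0$ forces $\beta=1$ and both sides collapse to the trivial bound $\alpha'(G)\ge 0$, so no separate treatment is required.
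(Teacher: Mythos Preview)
Your proposal is correct and follows exactly the route the paper takes: invoke Proposition~\ref{prop:ex}(ii) to realize $G$ as a weakly $(n,\beta)$-graph with $\beta=\frac{\mu_n-\mu_2}{\mu_n+\mu_2}$, then apply Theorem~\ref{main1} and simplify $\frac{1-\beta}{1+\beta}=\frac{\mu_2}{\mu_n}$. The paper states this derivation in a single sentence preceding the corollary, while you have spelled out the algebra and the ceiling step explicitly; there is no substantive difference.
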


\begin{corollary}%[\cite{BrHa05,GL22}]
Let $G$ be a nonempty graph on $n$ vertices that satisfies $2\mu_2 \ge \mu_n$.
\\{\em (i) (\cite{BrHa05})} If $n$ is even, then $G$ has a perfect matching.
\\{\em (ii) (\cite{GL22})} If $n$ is odd, then $G$ is factor-critical.
\\ {\em (iii) (\cite{XueZS})} Then $G$ has a fractional perfect matching.
\end{corollary}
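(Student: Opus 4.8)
The plan is to recognize the hypothesis $2\mu_2 \ge \mu_n$ as the spectral form of the condition $\beta \le 1/3$ for the Laplacian-based choice of $\beta$, so that all three parts reduce directly to theorems already established for weakly $(n,\beta)$-graphs. First I would set $\beta = \frac{\mu_n - \mu_2}{\mu_n + \mu_2}$ and check that it is well defined: since $G$ is nonempty it has an edge, so $\mu_n > 0$, and then $2\mu_2 \ge \mu_n > 0$ forces $\mu_2 > 0$ (in particular $G$ is connected), whence $\mu_n + \mu_2 > 0$ and $0 \le \beta < 1$. By Proposition~\ref{prop:ex}(ii), $G$ is a weakly $(n,\beta)$-graph for this value of $\beta$.

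Next I would verify the key equivalence. Because $\mu_n + \mu_2 > 0$, the inequality $\beta \le \frac13$ is equivalent to $3(\mu_n - \mu_2) \le \mu_n + \mu_2$, i.e. to $2\mu_n \le 4\mu_2$, i.e. to $2\mu_2 \ge \mu_n$. Thus the hypothesis is exactly the statement $\beta \le \frac13$. (In the degenerate case $\beta = 0$ one has $\mu_2 = \mu_n$ and $G$ is complete, where all three conclusions are immediate; otherwise $0 < \beta \le \frac13$.)

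With $G$ a weakly $(n,\beta)$-graph and $0 < \beta \le \frac13$ in hand, each part now follows from an earlier result: part (i) is the perfect-matching clause of Theorem~\ref{main1} for $n$ even; part (ii) is Theorem~\ref{main5}(i) for $n$ odd; and part (iii) is Corollary~\ref{fracpm}, or equivalently follows from (i) and (ii) together with the fact that both a perfect matching and a factor-critical graph admit a fractional perfect matching. I do not expect a genuine obstacle here: the entire substance lives in the earlier combinatorial theorems, and the only care required is the boundary bookkeeping that guarantees $\mu_2 > 0$ (and hence connectivity) so that $\beta$ is legitimate and strictly below $1$.
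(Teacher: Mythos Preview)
Your proposal is correct and follows exactly the route the paper takes: set $\beta = \frac{\mu_n-\mu_2}{\mu_n+\mu_2}$, invoke Proposition~\ref{prop:ex}(ii) to see that $G$ is a weakly $(n,\beta)$-graph, translate $2\mu_2\ge\mu_n$ into $\beta\le 1/3$, and then read off (i), (ii), (iii) from Theorem~\ref{main1}, Theorem~\ref{main5}(i), and Corollary~\ref{fracpm} respectively. Your extra bookkeeping about $\mu_n>0$, $\mu_2>0$, and the degenerate $\beta=0$ case is sound and, if anything, more careful than the paper's one-line justification.
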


Proposition~\ref{prop:ex} also indicates that if $2\delta \ge \mu_2 +\mu_n$, then $G$ is an $(n,\beta)$-graph where $\beta = \frac{\mu_n - \mu_2}{\mu_n + \mu_2}$. Thus Theorems~\ref{main2} and~\ref{main5} imply the following results.
\begin{corollary}\label{cor:lap}
If $2\delta \ge \mu_2 +\mu_n$, then
$$\alpha'(G)\ge \min\left\{\left\lceil\frac{3\mu_2 +\mu_n}{4\mu_n} (n -1)\right\rceil, \left\lceil\frac{1}{2}(n-1)\right\rceil \right\}\ge \left\lceil\frac{n-1}{4}\right\rceil.$$
\end{corollary}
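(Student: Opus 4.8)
The plan is to read off this corollary directly from the two earlier results, so almost all of the work is a short algebraic simplification followed by a passage to integers. By Proposition~\ref{prop:ex}(iii), the hypothesis $2\delta \ge \mu_2 + \mu_n$ guarantees that $G$ is an $(n,\beta)$-graph with $\beta = \frac{\mu_n - \mu_2}{\mu_n + \mu_2}$. Feeding this particular $\beta$ into Theorem~\ref{main2} immediately gives
$$\alpha'(G) \ge \min\left\{\frac{2-\beta}{2(1+\beta)},\, \frac{1}{2}\right\}(n-1),$$
so the entire task reduces to evaluating $\frac{2-\beta}{2(1+\beta)}$ for this $\beta$ and then accounting for the integrality of $\alpha'(G)$.

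First I would carry out the simplification. Writing $1+\beta = \frac{2\mu_n}{\mu_n+\mu_2}$ and $2-\beta = \frac{3\mu_2 + \mu_n}{\mu_n+\mu_2}$, the common factor $\mu_n+\mu_2$ cancels and yields
$$\frac{2-\beta}{2(1+\beta)} = \frac{3\mu_2 + \mu_n}{4\mu_n},$$
so that $\alpha'(G) \ge \min\left\{\frac{3\mu_2 + \mu_n}{4\mu_n},\, \frac{1}{2}\right\}(n-1)$. Since $\alpha'(G)$ is an integer, I would then replace the right-hand side by its ceiling; because $x\mapsto \lceil x\rceil$ is monotone non-decreasing, the ceiling of a minimum of two reals equals the minimum of their ceilings, which produces exactly the first claimed inequality
$$\alpha'(G) \ge \min\left\{\left\lceil\frac{3\mu_2 + \mu_n}{4\mu_n}(n-1)\right\rceil,\ \left\lceil\frac{1}{2}(n-1)\right\rceil\right\}.$$

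For the final inequality I would use only that the Laplacian is positive semidefinite, so $\mu_2 \ge 0$, together with $\mu_n > 0$ for a nonempty graph. These give $\frac{3\mu_2 + \mu_n}{4\mu_n} \ge \frac{\mu_n}{4\mu_n} = \frac14$, while trivially $\frac12 \ge \frac14$, so by monotonicity of the ceiling each term in the minimum is at least $\left\lceil\frac{n-1}{4}\right\rceil$, and hence so is the minimum. Everything here is elementary; the only two points that merit a moment's care are the interchange of ceiling with minimum (justified by monotonicity) and the verification that neither term of the minimum can dip below the $\tfrac14$ threshold, which is exactly what $\mu_2 \ge 0$ secures. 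I do not anticipate any genuine obstacle beyond keeping these two bookkeeping steps clean.
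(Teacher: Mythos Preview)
Your proposal is correct and follows exactly the approach the paper indicates: the paper simply remarks that Proposition~\ref{prop:ex}(iii) makes $G$ an $(n,\beta)$-graph with $\beta=\frac{\mu_n-\mu_2}{\mu_n+\mu_2}$ and that Theorem~\ref{main2} then yields the corollary, without spelling out the algebra or the ceiling step. Your write-up fills in precisely those details (the computation of $\frac{2-\beta}{2(1+\beta)}$, the integrality/ceiling argument, and the use of $\mu_2\ge 0$ for the final bound), all of which are straightforward.
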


\begin{corollary}
If $2\delta \ge \mu_2 +\mu_n$ and $3\mu_2 \ge \mu_n,$ then $G$ has a perfect matching when $n$ is even and $G$ is factor-critical when $n$ is odd,  and thus $G$ has a fractional perfect matching.
\end{corollary}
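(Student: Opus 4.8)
The plan is to reduce the whole statement to the $(n,\beta)$-graph machinery already in place, so that the two spectral hypotheses collapse into a single bound on $\beta$. First I would invoke Proposition~\ref{prop:ex}(iii): the assumption $2\delta \ge \mu_2 + \mu_n$ guarantees that $G$ is an $(n,\beta)$-graph with
\[
\beta = \frac{\mu_n - \mu_2}{\mu_n + \mu_2}.
\]
This converts the first hypothesis into a purely combinatorial property of $G$, which is exactly the input the matching theorems want.

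Next I would use the second hypothesis $3\mu_2 \ge \mu_n$ to control $\beta$. Since $t \mapsto \frac{t-\mu_2}{t+\mu_2}$ is increasing in $t$ (its derivative is $2\mu_2/(t+\mu_2)^2 \ge 0$), substituting $\mu_n \le 3\mu_2$ gives
\[
\beta = \frac{\mu_n - \mu_2}{\mu_n + \mu_2} \le \frac{3\mu_2 - \mu_2}{3\mu_2 + \mu_2} = \frac{1}{2}.
\]
Hence $\beta \le 1/2$, which places $G$ squarely in the regime covered by the earlier results. With this bound in hand the conclusions are immediate: for $n$ even, Theorem~\ref{main2} yields a perfect matching; for $n$ odd, Theorem~\ref{main5}(ii) shows $G$ is factor-critical; and in either case a fractional perfect matching exists, which also follows directly from Corollary~\ref{fracpm}.

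The only point needing a word of care is the boundary value $\beta = 0$, since the cited theorems are stated for $0 < \beta \le 1/2$. This is handled by the stated convention that an $(n,0)$-graph is an $(n,\beta)$-graph for \emph{every} $\beta > 0$, so one may simply pick any $\beta \in (0,1/2]$ before applying the theorems. (In fact $\beta = 0$ would force $\mu_2 = \mu_n$, hence $G = K_n$, which violates $2\delta \ge \mu_2 + \mu_n$; so the degenerate case does not even arise.) Consequently there is no genuine obstacle here—the argument is a short deduction, and the only computation of substance is the elementary monotonicity estimate producing $\beta \le 1/2$.
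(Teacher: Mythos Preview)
Your argument is correct and matches the paper's intended derivation: the text preceding this corollary explicitly says that Proposition~\ref{prop:ex}(iii) makes $G$ an $(n,\beta)$-graph with $\beta=\frac{\mu_n-\mu_2}{\mu_n+\mu_2}$, and that Theorems~\ref{main2} and~\ref{main5} then yield the stated corollaries; your computation $3\mu_2\ge\mu_n\Rightarrow\beta\le\tfrac12$ is exactly the missing arithmetic link. Your care with the $\beta=0$ boundary is fine but unnecessary, since Proposition~\ref{prop:ex}(iii) already lets you take any $\beta\ge\frac{\mu_n-\mu_2}{\mu_n+\mu_2}$, so one may simply choose $\beta=\tfrac12$ directly.
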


Since every $(n, d, \lambda)$-graph is an $(n, \lambda/d)$-graph, Theorems~\ref{main2} implies the following result for regular graphs via adjacency eigenvalues, which provides a valuable addition to the results of~\cite{CiGr07, CiGH09, OCi10} on matching number.
\begin{corollary}
Let $G$ be an $(n, d, \lambda)$-graph. Then
$$\alpha'(G)\ge \min\left\{\left\lceil\left(\frac{3(d-\lambda)}{4(d+\lambda)}+\frac{1}{4}\right) (n -1)\right\rceil, \left\lceil\frac{1}{2}(n-1)\right\rceil \right\}.$$\end{corollary}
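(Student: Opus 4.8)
The plan is to combine Proposition~\ref{prop:ex}(i) with Theorem~\ref{main2}, since the $(n,d,\lambda)$ hypothesis is exactly what converts the spectral data into the purely combinatorial $(n,\beta)$-graph framework already developed in the previous subsection. First I would invoke Proposition~\ref{prop:ex}(i), which guarantees that every $(n,d,\lambda)$-graph is an $(n,\beta)$-graph with $\beta=\lambda/d$. This reduces the claim entirely to the general matching bound for $(n,\beta)$-graphs and removes all spectral considerations from the remainder of the argument.

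Next I would apply Theorem~\ref{main2} with $\beta=\lambda/d$, which gives
$$\alpha'(G)\ge \min\left\{\frac{2-\beta}{2(1+\beta)},\ \frac{1}{2}\right\}\cdot (n-1).$$
The only genuine computation is the algebraic identity that rewrites the first term in terms of $d$ and $\lambda$. Substituting $\beta=\lambda/d$ and clearing denominators yields
$$\frac{2-\beta}{2(1+\beta)} = \frac{2d-\lambda}{2(d+\lambda)} = \frac{3(d-\lambda)+(d+\lambda)}{4(d+\lambda)} = \frac{3(d-\lambda)}{4(d+\lambda)}+\frac{1}{4},$$
so the first entry of the minimum matches the expression in the statement exactly.

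Finally, since $\alpha'(G)$ is a nonnegative integer, any inequality $\alpha'(G)\ge c$ with $c$ real upgrades to $\alpha'(G)\ge \lceil c\rceil$; and because the ceiling function is nondecreasing, $\lceil\min\{a,b\}\rceil=\min\{\lceil a\rceil,\lceil b\rceil\}$, so distributing the ceiling over the two terms produces precisely the claimed minimum of ceilings. I expect no real obstacle here: the entire content is carried by Proposition~\ref{prop:ex}(i) and Theorem~\ref{main2}, with the remaining work consisting only of the short fraction simplification above and the routine passage from a real lower bound to its ceiling via integrality of the matching number.
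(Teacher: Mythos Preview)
Your proposal is correct and matches the paper's approach exactly: the paper simply remarks that every $(n,d,\lambda)$-graph is an $(n,\lambda/d)$-graph (Proposition~\ref{prop:ex}(i)) and then invokes Theorem~\ref{main2}, leaving the algebraic rewriting and the passage to ceilings implicit. Your write-up makes both of those routine steps explicit, and the fraction identity and ceiling argument you give are valid.
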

%\ge \left\lceil\frac{n-1}{4}\right\rceil.$$

%It is known that $\mu_i +\lambda_i =d$ for $d$-regular graphs. Corollary~\ref{cor:lap} implies the following.
%\begin{corollary} Let $G$ be a $d$-regular graph with $\lambda_2 \ge -\lambda_n$. Then $$\alpha'(G)\ge \min\left\{\left\lceil\left(\frac{3(d-\lambda_2)}{4(d-\lambda_n)}+\frac{1}{4}\right) (n -1)\right\rceil, \left\lceil\frac{1}{2}(n-1)\right\rceil \right\}.$$ \end{corollary}

Similarly, Theorems~\ref{main3} and~\ref{main4} imply lower bounds on bipartite matching number via Laplacian eigenvalues by setting $\beta =\frac{\mu_n -\mu_2}{\mu_n + \mu_2}$.
\begin{corollary}
Let $G$ be a $(U, W)$-bipartite graph with $|W|\ge t|U|$. If $t\ge 1$, then
\begin{align*}
\alpha'(G)\ge \min\left\{\frac{t\big(4\mu_n\mu_2-(\mu_n-\mu_2)^2\big)}{(\mu_n+\mu_2)^2},1\right\}\cdot |U|.
\end{align*}
\end{corollary}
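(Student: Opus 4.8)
The plan is to derive this corollary directly from Theorem~\ref{main3} by specializing the parameter $\beta$ to the Laplacian ratio $\frac{\mu_n-\mu_2}{\mu_n+\mu_2}$, exactly as the sentence preceding the statement suggests. First I would invoke Proposition~\ref{prop:ex}(ii), which guarantees that every graph---and in particular our bipartite $G$---is a weakly $(n,\beta)$-graph with $\beta=\frac{\mu_n-\mu_2}{\mu_n+\mu_2}$. Since $G$ is moreover a $(U,W)$-bipartite graph with $|W|\ge t|U|$ and $t\ge 1$, the hypotheses of Theorem~\ref{main3} are met verbatim, and that theorem yields $\alpha'(G)\ge\min\{t(1-2\beta^2),\,1\}\cdot|U|$.

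The remaining task is purely algebraic: rewrite $t(1-2\beta^2)$ in terms of $\mu_2$ and $\mu_n$. With $\beta^2=\frac{(\mu_n-\mu_2)^2}{(\mu_n+\mu_2)^2}$, I would use the identity $(\mu_n+\mu_2)^2=(\mu_n-\mu_2)^2+4\mu_n\mu_2$ to get
$$1-2\beta^2=\frac{(\mu_n+\mu_2)^2-2(\mu_n-\mu_2)^2}{(\mu_n+\mu_2)^2}=\frac{4\mu_n\mu_2-(\mu_n-\mu_2)^2}{(\mu_n+\mu_2)^2}.$$
Multiplying by $t$ converts the bound $\min\{t(1-2\beta^2),\,1\}\cdot|U|$ into exactly the claimed expression $\min\left\{\frac{t\big(4\mu_n\mu_2-(\mu_n-\mu_2)^2\big)}{(\mu_n+\mu_2)^2},\,1\right\}\cdot|U|$, completing the argument.

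There is essentially no obstacle here beyond bookkeeping: all the combinatorial content is carried by Theorem~\ref{main3} together with Proposition~\ref{prop:ex}(ii), and the only thing to verify is the one-line simplification above. The single point worth flagging is that the first term in the minimum is meaningful only when $4\mu_n\mu_2-(\mu_n-\mu_2)^2>0$, equivalently $\beta<1/\sqrt{2}$; when this fails the first term is nonpositive and the asserted inequality holds trivially, which is precisely consistent with the reduction $1-2\beta^2>0$ assumed at the outset of the proof of Theorem~\ref{main3}.
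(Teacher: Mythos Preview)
Your proof is correct and follows exactly the approach indicated in the paper: the corollary is stated immediately after the remark that Theorems~\ref{main3} and~\ref{main4} yield Laplacian bounds by setting $\beta=\frac{\mu_n-\mu_2}{\mu_n+\mu_2}$, and your argument simply carries out that substitution via Proposition~\ref{prop:ex}(ii) together with the algebraic identity $(\mu_n+\mu_2)^2-2(\mu_n-\mu_2)^2=4\mu_n\mu_2-(\mu_n-\mu_2)^2$.
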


\begin{corollary}[\cite{AFG19}]
Let $G$ be a $(U, W)$-bipartite graph with $|W|\ge s|U|/(s-2)$, where $s>2$. If $\mu_n\le \frac{\sqrt{s}+1}{\sqrt{s}-1}\mu_2$, then $G$ has a matching that saturates $U$.
\end{corollary}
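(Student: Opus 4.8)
The plan is to obtain this corollary as a direct specialization of Theorem~\ref{main3}, using Proposition~\ref{prop:ex}(ii) to realize $G$ as a weakly $(n,\beta)$-graph. First I would set
$$\beta=\frac{\mu_n-\mu_2}{\mu_n+\mu_2}\qquad\text{and}\qquad t=\frac{s}{s-2}.$$
Since $s>2$ we have $t>1$, and the hypothesis $|W|\ge s|U|/(s-2)$ is literally $|W|\ge t|U|$, so both structural requirements of Theorem~\ref{main3} are in place. Recall that a matching saturating $U$ is precisely one of size $|U|$; since $|W|\ge|U|$ forces $\alpha'(G)\le|U|$, it suffices to prove $\alpha'(G)\ge|U|$, which by Theorem~\ref{main3} follows as soon as $\min\{t(1-2\beta^2),1\}=1$, i.e. as soon as $t(1-2\beta^2)\ge 1$.

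The only real content is to translate the eigenvalue ratio hypothesis into a bound on $\beta^2$. Writing $\rho=\mu_n/\mu_2$ (we may assume $\mu_2>0$, as $\mu_2=0$ together with the hypothesis forces the empty graph), the quantity $\beta=\frac{\rho-1}{\rho+1}$ is increasing in $\rho$, so the assumption $\mu_n\le\frac{\sqrt{s}+1}{\sqrt{s}-1}\mu_2$, i.e. $\rho\le\frac{\sqrt{s}+1}{\sqrt{s}-1}$, yields the largest admissible $\beta$ at $\rho=\frac{\sqrt{s}+1}{\sqrt{s}-1}$. Substituting this value gives $\rho-1=\frac{2}{\sqrt{s}-1}$ and $\rho+1=\frac{2\sqrt{s}}{\sqrt{s}-1}$, hence $\beta\le\frac{1}{\sqrt{s}}$ and therefore $\beta^2\le\frac{1}{s}$.

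Feeding this into the factor from Theorem~\ref{main3} finishes the argument:
$$t(1-2\beta^2)\ge\frac{s}{s-2}\left(1-\frac{2}{s}\right)=\frac{s}{s-2}\cdot\frac{s-2}{s}=1,$$
so $\min\{t(1-2\beta^2),1\}=1$ and $\alpha'(G)\ge|U|$, meaning $G$ has a matching saturating $U$. I do not expect a genuine obstacle here; the whole proof is a substitution plus a short monotonicity-and-algebra check. The one point deserving care is the exact matching of constants: one must verify that the threshold $\frac{\sqrt{s}+1}{\sqrt{s}-1}$ in the eigenvalue hypothesis is calibrated precisely so that $\beta^2\le 1/s$, and that this in turn makes the product $t(1-2\beta^2)$ hit the boundary value $1$ rather than falling short. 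It is also worth confirming that the edge case $\mu_2=0$ is vacuous, so that dividing by $\mu_2$ and by $\sqrt{s}-1$ is legitimate.
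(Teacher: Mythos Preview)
Your proposal is correct and follows exactly the same route as the paper: invoke Proposition~\ref{prop:ex}(ii) to take $\beta=\frac{\mu_n-\mu_2}{\mu_n+\mu_2}$, set $t=s/(s-2)$, and verify that the eigenvalue hypothesis yields $t(1-2\beta^2)\ge 1$ so that Theorem~\ref{main3} gives $\alpha'(G)\ge |U|$. You have simply written out the algebra (the monotonicity in $\rho$ and the substitution giving $\beta^2\le 1/s$) that the paper leaves implicit.
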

\begin{proof}
By Proposition~\ref{prop:ex}, $G$ is a weakly $(n,\beta)$-graph where $\beta = \frac{\mu_n - \mu_2}{\mu_n + \mu_2}$. Let $t= s/(s-2)$.
If $\mu_n\le \frac{\sqrt{s}+1}{\sqrt{s}-1}\mu_2$, then $t(1-2\beta^2)\ge 1$, and thus  $G$ has a matching that saturates $U$ by Theorem~\ref{main3}.
\end{proof}

\begin{corollary}
Let $G$ be a $(U, W)$-bipartite graph with $|W|\ge |U|$. If $2\delta \ge \mu_2 +\mu_n$, then $G$ has a matching that saturates $U$.
\end{corollary}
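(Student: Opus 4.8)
The plan is to deduce this corollary from Theorem~\ref{main4} by verifying that the hypothesis $2\delta \ge \mu_2 + \mu_n$ forces $G$ to be an $(n,\beta)$-graph with a value of $\beta$ small enough that $1/\beta^2 \ge 1$, so that the conclusion $\alpha'(G) \ge \min\{1/\beta^2, 1\}\cdot |U| = |U|$ becomes a saturation of $U$. First I would invoke Proposition~\ref{prop:ex}(iii): since $2\delta \ge \mu_2 + \mu_n$, the graph $G$ is an $(n,\beta)$-graph with $\beta = \frac{\mu_n - \mu_2}{\mu_n + \mu_2}$. Because $\mu_2 \le \mu_n$ always holds (and $\mu_2 > 0$ for a nonempty graph relevant here), we have $0 \le \beta < 1$, whence $\beta^2 < 1$ and therefore $1/\beta^2 > 1$, giving $\min\{1/\beta^2, 1\} = 1$.

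Next I would check the remaining hypotheses of Theorem~\ref{main4}. That theorem requires $G$ to be a $(U,W)$-bipartite graph with $|W| \ge |U|$ and no isolated vertices. The bipartiteness and $|W| \ge |U|$ are given directly. The no-isolated-vertices condition follows from $2\delta \ge \mu_2 + \mu_n$: for a nonempty graph we have $\mu_2 > 0$ (in fact one should note $\mu_n \ge \mu_2 > 0$ here), so $2\delta \ge \mu_2 + \mu_n > 0$ forces $\delta \ge 1$, i.e. every vertex has a neighbor. With all hypotheses of Theorem~\ref{main4} satisfied, we conclude
\[
\alpha'(G) \ge \min\{1/\beta^2, 1\}\cdot |U| = |U|,
\]
and since $\alpha'(G) \le |U|$ for any $(U,W)$-bipartite graph, this matching saturates $U$.

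The only delicate point — and the step I expect to require the most care — is the edge case where $\beta$ could degenerate. If $\mu_2 = \mu_n$ then $\beta = 0$ and $1/\beta^2$ is formally undefined; however in that situation $G$ is an $(n,0)$-graph and the bound $\min\{1/\beta^2,1\} = 1$ should be read as the trivial upper threshold, so one interprets $\min\{1/\beta^2,1\} = 1$ whenever $\beta \le 1$ (consistent with the convention that $K_n$ is an $(n,0)$-graph). I would make this reading explicit to avoid a division-by-zero objection. Apart from this bookkeeping, the argument is a direct specialization: the whole content has already been absorbed into Proposition~\ref{prop:ex}(iii) and Theorem~\ref{main4}, so no genuinely new estimate is needed.
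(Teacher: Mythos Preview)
Your proof is correct and follows exactly the paper's approach: invoke Proposition~\ref{prop:ex}(iii) to see that $G$ is an $(n,\beta)$-graph with $\beta = \frac{\mu_n - \mu_2}{\mu_n + \mu_2} \le 1$, then apply Theorem~\ref{main4} with $\min\{1/\beta^2,1\}=1$. One small slip: $\mu_2 > 0$ requires connectivity, not merely nonemptiness, but the needed inequality $\mu_2 + \mu_n > 0$ already follows from $\mu_n > 0$ (which does hold for any nonempty graph), so your verification of the no-isolated-vertices hypothesis---a check the paper itself leaves implicit---still goes through.
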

\begin{proof}
By Proposition~\ref{prop:ex}, if $2\delta \ge \mu_2 +\mu_n$, then $G$ is an $(n,\beta)$-graph where $\beta = \frac{\mu_n - \mu_2}{\mu_n + \mu_2}$. Since $1\le 1/\beta^2$, by Theorem~\ref{main4}, $G$ has a matching that saturates $U$.
\end{proof}

\begin{remark}
By Proposition~$\ref{prop:ex}$, every graph with no isolated vertices is an $(n,\frac{\sigma\Delta}{\delta})$-graph and every connected graph is a weakly $(n,\beta)$-graph where $\beta=\frac{\sigma_n-\sigma_2}{\sigma_n+\sigma_2}\cdot \frac{\Delta}{\delta}$.
Thus Theorems~{\rm\ref{main1}, \ref{main2}, \ref{main3}, \ref{main4}} and~{\rm\ref{main5}} imply similar results via normalized Laplacian eigenvalues. We omit these results as one can easily obtain them if needed.
\end{remark}

\iffalse%%%%%%%%% Deletion starts
By Theorem~\ref{main2}, we have the following corollary.
\begin{corollary}
$\displaystyle\alpha'(G)\ge\min\left\{\frac{2\delta-\sigma\Delta}{2\delta+2\sigma\Delta}, \frac{1}{2}\right\}\cdot (n-1)$.
\end{corollary}
By Proposition~\ref{prop:ex}, every graph is a weakly $(n,\beta)$-graph where $\beta=\frac{\sigma_n-\sigma_2}{\sigma_n+\sigma_2}\cdot \frac{\Delta}{\delta}$.
Thus Theorem~\ref{main1} implies the following corollary.
\begin{corollary}
$\displaystyle \alpha'(G)\ge\min\left\{\frac{(\sigma_n+\sigma_2)(\delta+\Delta)-2\Delta\sigma_n}{(\sigma_n+\sigma_2)(\delta+\Delta)-2\Delta\sigma_2}, \frac{1}{2}\right\}\cdot (n-1)$.
\end{corollary}
\fi%%%%%%%%%%%%% Deletion ends

At the end of this section, we would like to revisit fractional matching number $\alpha'_f(G)$. Since $\alpha'_f(G)\ge\alpha'(G)$, any lower bound on $\alpha'(G)$ automatically becomes a lower bound on $\alpha'_f(G)$. While we were not able to prove stronger results for weakly $(n,\beta)$-graphs, we have the following slight improvement of the result in~\cite{GL22} via Laplacian eigenvalues.

\begin{theorem}\label{newt3}
Let $r$ be a real number with $0< r\le 1/2$. \par\noindent $(i)$ If $\mu_2 \ge r \mu_n,$  then $\alpha'_f(G)\ge rn$; \par\noindent $(ii)$ If $\mu_2 \ge r \mu_n$ and $\alpha'(G)\ne \frac{1}{2}(n-1)$, then $\alpha'(G)\ge rn$.
\end{theorem}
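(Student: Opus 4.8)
The plan is to derive both parts from the (fractional) Berge--Tutte formula, translating the hypothesis through Proposition~\ref{prop:ex}(ii): $G$ is a weakly $(n,\beta)$-graph with $\beta=\frac{\mu_n-\mu_2}{\mu_n+\mu_2}$, and since $\frac{1-\beta}{1+\beta}=\frac{\mu_2}{\mu_n}$, the assumption $\mu_2\ge r\mu_n$ is exactly $\frac{1-\beta}{1+\beta}\ge r$. The combinatorial machinery of Lemmas~\ref{22l} and~\ref{L23l} already yields a bound of $r(n-1)$, so the entire task is to gain the extra additive $r$. Tracing where that is lost in Lemma~\ref{L23l}(i), the only obstruction to the clean estimate $\frac{3\beta-1}{1+\beta}n\le(1-2r)n$ is the \emph{degenerate} configuration in which $V(G)-S$ is an independent set (every component of $G-S$ a single vertex): halving an odd independent set costs a $+1$ and the weakly abstraction cannot recover it. The crux is therefore to handle that one configuration by a genuinely spectral argument.

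The key new ingredient I would isolate is the following: \emph{if $U:=V(G)-S$ is independent, then $|S|\ge\frac{\delta}{\mu_n}\,n$.} To prove it, form the $2\times2$ quotient matrix $B$ of the Laplacian $L=D-A$ for the partition $\{U,S\}$. Since $U$ is independent and $N(U)\subseteq S$, a short computation gives
\begin{equation*}
B=\begin{pmatrix}\vol(U)/|U| & -\vol(U)/|U|\\ -\vol(U)/|S| & \vol(U)/|S|\end{pmatrix},
\end{equation*}
whose eigenvalues are $0$ and $\theta:=\frac{\vol(U)\,n}{|U|\,|S|}$. By Haemers's interlacing for quotient matrices, $\theta\le\mu_n$, and since $\vol(U)\ge\delta|U|$ this rearranges to $|S|\ge\frac{\delta n}{\mu_n}$. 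As $U$ has at least two vertices (otherwise the whole situation is trivial), $G$ is non-complete, so Fiedler's inequality $\mu_2\le\delta$ upgrades this to $|S|\ge\frac{\mu_2}{\mu_n}n\ge rn$. (Note $\mu_2\ge r\mu_n>0$ forces $G$ connected, so $\delta\ge1$ and the edgeless case is excluded.) This lemma is the heart of the improvement and the step I expect to be the main obstacle.

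For part (i) I would verify $i(G-S)-|S|\le(1-2r)n$ for every $S$, which by Theorem~\ref{thm:frac} is equivalent to $\alpha'_f(G)\ge rn$. Write $c=i(G-S)$ and let $U$ be the isolated vertices of $G-S$; the case $c\le|S|$ is immediate. If $G-S$ has a non-isolated vertex, then $|V(G)-S|\ge c+1$, and taking $X\subseteq U$ with $|X|=\lceil c/2\rceil$ and $Y=(V(G)-S)\setminus X$ (so $e(X,Y)=0$ and $X\cup Y=V(G)-S$), Lemma~\ref{22l} gives $|S|\ge\frac{1-\beta}{\beta}|X|$ and $|X|\le\frac{\beta n}{1+\beta}$, whence $c-|S|\le\frac{3\beta-1}{\beta}|X|\le\frac{3\beta-1}{1+\beta}n\le(1-2r)n$ exactly as in~\eqref{nex1}. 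If instead $V(G)-S=U$ is independent, the lemma above yields $|S|\ge rn$, so $c-|S|=n-2|S|\le(1-2r)n$. No exclusion is needed because isolated vertices never produce a single large odd component.

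For part (ii) I run the same dichotomy with $o(G-S)$ in place of $i(G-S)$. Set $d:=\max_{S}\{o(G-S)-|S|\}$, so $\alpha'(G)=\tfrac12(n-d)$ by Theorem~\ref{btfo}. Exactly as above, any maximizer with $o(G-S)\ge2$ satisfies $d\le(1-2r)n$ (the independent case via the interlacing lemma, the non-degenerate case via~\eqref{nex1}). Hence if $d>(1-2r)n$ then every maximizer has $o(G-S)\le1$, forcing $d\le1$ and therefore $d=1$, i.e.\ $\alpha'(G)=\tfrac12(n-1)$ --- precisely the value excluded by hypothesis (this is what $K_n$ with $n$ odd achieves). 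Therefore $d\le(1-2r)n$ and $\alpha'(G)\ge rn$. Throughout, the quotient-matrix interlacing together with $\mu_2\le\delta$ is exactly what defeats the degenerate independent-set configuration; everything else reduces to the combinatorics already developed in Section~\ref{sect:matching}.
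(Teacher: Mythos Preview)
Your proof is correct and follows essentially the same route as the paper: reduce via the (fractional) Berge--Tutte formula, split according to whether $|V(G)-S|\ge c+1$ or $V(G)-S$ is independent, handle the first case through~\eqref{nex1}, and in the independent case combine a Laplacian bound on $|S|$ with Fiedler's inequality $\mu_2\le\delta$. The only difference is cosmetic: for the independent case you derive $|S|\ge\delta n/\mu_n$ directly by $2\times 2$ quotient-matrix interlacing, whereas the paper quotes the equivalent Laplacian independence-number bound $c\le(1-\delta/\mu_n)n$ from~\cite{GoNe08,LuLT07}; these are the same inequality (since $|S|=n-c$), and your interlacing argument is in fact one of the standard proofs of that cited bound.
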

\begin{proof}
If $G$ is a complete graph, then the statements are trivially true. Thus we may assume that $G$ is not a complete graph in the following. By Theorems~\ref{btfo} and~\ref{thm:frac}, we suppose that $S$ is a subset of $V(G)$ such that
$i(G-S)-|S|=n-2\alpha'_f(G)$ to show (i) and $o(G-S)-|S|=n-2\alpha'(G)$ to prove (ii). It suffices to show that
$$i(G-S) -|S|\le (1-2r)n\,\,\,\,\text{and}\,\,\,\, o(G-S) - |S|\le (1-2r)n.$$
Let $c_1=i(G-S)$ and $c_2=o(G-S)$. We use $c$ for both $c_1$ and $c_2$ when the proofs are the same. We may assume that $c\ge |S|+1$, for otherwise if $c\le |S|$, then $c -|S|\le 0\le (1-2r)n$, done.

First, we claim $c\ge 2$. Since $\mu_2 \ge r \mu_n>0$, $G$ is connected. If $|S|=0$, then  $c_1=i(G-S)=0$ and (i) clearly holds. Thus we may assume $|S|\ge 1$ for (i) and so $c_1\ge |S|+1\ge 2$. If $c_2=1$, then $|S|=0$ and thus $n$ is odd for (ii).  By the assumption, $o(G-S)-|S|=n-2\alpha'(G)$ and thus
%By Theorem~\ref{btfo} \red{and the choice of $S$},
$\alpha'(G)=\frac{1}{2}(n-1)$, a contradiction. Therefore we have $c_2\ge 2$.

%Thus, it suffices to show that $0<c_1-|S|=i(G-S) - |S|\le (1-2r)n$ for $|S|\ge 1$ and so $c_1\ge 2$. Similarly, we may suppose that $c_2\ge |S|+1$. If $c_2=1$, then $|S|=0$ and thus $n$ is odd. In this case, by Theorem \ref{btfo}, $\alpha'(G)=\frac{1}{2}(n-1)$, a contradiction. Thus, we also have $c_2\ge 2$.

By Proposition~\ref{prop:ex}, every graph is a weakly $(n,\beta)$-graph, where $\beta = \frac{\mu_n - \mu_2}{\mu_n + \mu_2}$. If $|V(G-S)|\ge c+1$, then by \eqref{nex1}, $$0<c-|S|< \frac{3\beta-1}{1+\beta}n=\left(1-\frac{2\mu_2}{\mu_n}\right)n\le (1-2r)n,$$
%by setting $\beta=\frac{\mu_n-\mu_2}{\mu_n+\mu_2}$ in \eqref{nex1}.
done. Thus we can assume that $|V(G-S)|=c.$
By taking a vertex in each component of $G-S$, we obtain an independent set of cardinality $c$. It is well known from~\cite{GoNe08,LuLT07} that
\begin{equation}\label{nex2}
c\le \frac{\mu_n-\delta}{\mu_n}n.
\end{equation}
%If $G=K_n$, then $G$ contains a Hamiltonian cycle (a cycle  spans each vertex of  $G$) and thus $G$ contains a fractional perfect matching by setting $1/2$ on each edge of the Hamiltonian cycle of $G$ and 0 for other edges. Now, we can conclude that (i) holds for $G=K_n$. When considering (ii), if $G=K_n$, then $n$ is even, as $\alpha'(G)\ne \frac{1}{2}(n-1)$. Thus, $\alpha'(G)=\frac{1}{2}n\ge rn$, as required.
Since $G$ is not a complete graph, we have $\mu_2\le \delta$ by Fiedler~\cite{Fie1}. Combining this with \eqref{nex2}, it follows that
$$0<c-|S|=2c-n\le \left(1-\frac{2\delta}{\mu_n}\right)n \le \left(1-\frac{2\mu_2}{\mu_n}\right)n\le (1-2r)n,$$ as required.
\end{proof}

\begin{remark}
With the similar argument as in Theorem {\em\ref{newt3}}, we have a refinement of Theorem {\em\ref{main2}}: If $G$ is an $(n,\beta)$-graph and $\alpha'(G)\ne\frac{1}{2}(n-1)$, then $\alpha'(G)\ge \min\left\{\frac{2-\beta}{2(1+\beta)},\, \frac{1}{2}\right\}\cdot n.$
\end{remark}

\section{Toughness  and Hamilton cycle}\label{sect:toughness}

 In this section, we study toughness and Hamilton cycle of (weakly) $(n,\beta)$-graphs.

\subsection{Toughness}
The {\bf toughness} $t(G)$ of a connected graph $G$ is defined as $t(G)=\min\left\{\frac{|S|}{c(G-S)}\right\}$, in which the minimum is taken over all proper subsets $S\subset V(G)$ such that $G-S$ is disconnected and $c(G-S)$ denotes the number of components of $G-S$. By convention, the toughness of a complete graph is infinity. For any real number $r\ge 0$, $G$ is {\bf $r$-tough} if $t(G)\ge r$. The graph toughness was introduced by Chv\'atal \cite{Chva73} in 1973 to study cycle structures.

Toughness of regular graphs from eigenvalues of the adjacency matrix was first studied by Alon~\cite{Alon95} who proved that for any connected $d$-regular graph $G$, $t(G)>\frac{1}{3}\left(\frac{d^2}{d\lambda+\lambda^2}-1\right)$. By this result, Alon showed that for every $t$ and $g$ there are $t$-tough graphs of girth strictly greater than $g$, strengthening a result of Bauer, Van den Heuvel and Schmeichel \cite{BaVS95} % who showed the same for $g=3$,
thus   disprove in a strong sense a conjecture of Chv\'atal \cite{Chva73} that there exists a constant $t_0$ such that every $t_0$-tough graph is pancyclic. Brouwer~\cite{Brou95} independently showed that $t(G)>\frac{d}{\lambda}-2$ for any connected $d$-regular graph $G$, and he also conjectured that $t(G) \ge\frac{d}{\lambda}-1$ in \cite{Brou95, Brou96}. Some related results can be found in~\cite{ CiGu16,CiWo14, Gu21, POHBWWC}. Recently Brouwer's conjecture has been confirmed in~\cite{Gu21b}.

\begin{theorem}[\cite{Gu21b}]\label{thm:gu}
For any connected $d$-regular graph $G$, $\displaystyle t(G)\ge\frac{d}{\lambda}-1$.
\end{theorem}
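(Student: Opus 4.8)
The plan is to deduce Theorem~\ref{thm:gu} from the combinatorial machinery already developed for $(n,\beta)$-graphs rather than arguing directly with the adjacency spectrum. Let $G$ be a connected $d$-regular graph on $n$ vertices. By Proposition~\ref{prop:ex}(i), $G$ is an $(n,\beta)$-graph with $\beta=\lambda/d$, so it suffices to establish the purely combinatorial bound $t(G)\ge\frac{1-\beta}{\beta}$; substituting $\beta=\lambda/d$ then gives $t(G)\ge\frac{d-\lambda}{\lambda}=\frac{d}{\lambda}-1$. We may assume $0<\beta<1$, since the bound $\frac{1-\beta}{\beta}$ is nonpositive when $\beta\ge 1$ (as happens, e.g., for bipartite regular graphs where $\lambda=d$), and we may assume $G$ is noncomplete, since complete graphs have infinite toughness and admit no disconnecting set. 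By the definition of toughness it then suffices to fix an arbitrary proper separating set $S\subset V(G)$ with $c:=c(G-S)\ge 2$ components and show $|S|\ge\frac{1-\beta}{\beta}\,c$.

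Write $D_1,\dots,D_c$ for the components of $G-S$ with $|D_1|\le\cdots\le|D_c|$. The two inputs I would use are Lemma~\ref{22l}, which yields $|S|\ge\frac{1-\beta}{\beta}|X|$ whenever $V(G)-S$ is split into disjoint sets $X,Y$ with $|X|\le|Y|$ and no edge between them, and the transversal bound $c\le\frac{\beta n}{1+\beta}$ from Lemma~\ref{L23l}(ii), obtained by applying the $(n,\beta)$-graph inequality to $X=Y=U$ where $U$ selects one vertex from each $D_i$. The argument then splits on $|V(G)-S|$ exactly as in the proof of Lemma~\ref{L23l}(ii). If $|V(G)-S|\le\frac{2\beta n}{1+\beta}$, then $|S|\ge\frac{1-\beta}{1+\beta}n$, and combining this with $c\le\frac{\beta n}{1+\beta}$ gives $\frac{|S|}{c}\ge\frac{1-\beta}{\beta}$ at once. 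Otherwise $|V(G)-S|>\frac{2\beta n}{1+\beta}\ge 2c$, so $|V(G)-S|\ge 2c+1$, and I would further distinguish according to whether the $c-1$ smallest components are all singletons.

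In the non-singleton case $\sum_{i=1}^{c-1}|D_i|\ge c$, the key step is the vertex-partition lemma invoked from~\cite{Gu21b} inside Lemma~\ref{L23l}: the sets $D_1,\dots,D_c$ can be regrouped into disjoint $X,Y$ with $e(X,Y)=0$ and $|Y|\ge|X|\ge c$, after which Lemma~\ref{22l} gives $|S|\ge\frac{1-\beta}{\beta}|X|\ge\frac{1-\beta}{\beta}c$. In the singleton case $\sum_{i=1}^{c-1}|D_i|=c-1$, I would instead take $X=\bigcup_{i=1}^{c-1}D_i$ (an independent set of size $c-1$) and $Y=V(G)-S$; the $(n,\beta)$-graph inequality together with $|Y|>\frac{2\beta n}{1+\beta}$ yields $|S|>\frac{2(c-1)}{\beta(1+\beta)}\ge\frac{c}{\beta(1+\beta)}\ge\frac{1-\beta}{\beta}c$, the last inequality being just $\frac{1}{1+\beta}\ge 1-\beta$, i.e. $\beta^2\ge 0$. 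I expect the main obstacle to be the partition lemma producing $X$ with $|X|\ge c$ while keeping $|X|\le|Y|$ and $e(X,Y)=0$: balancing a family of components of wildly differing sizes into two halves each of size at least $c$ is the genuinely combinatorial heart of the argument, and it is precisely what forces the separate singleton treatment, where no balanced no-edge partition of each part size $c$ can exist. Everything else reduces to the elementary inequalities supplied by Lemmas~\ref{22l} and~\ref{L23l}.
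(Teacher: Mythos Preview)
Your proposal is correct and follows essentially the same route as the paper: the paper derives Theorem~\ref{thm:gu} as an immediate corollary of Theorem~\ref{tough-main} via Proposition~\ref{prop:ex}(i), and your outline of $t(G)\ge\frac{1-\beta}{\beta}$ reproduces the paper's proof of Theorem~\ref{tough-main} step for step, including the case split on $|V(G)-S|$ versus $\frac{2\beta n}{1+\beta}$, the singleton subcase handled through Lemma~\ref{L23l}(ii), and the non-singleton subcase handled through the partition lemma of~\cite{Gu21b} combined with Lemma~\ref{22l}.
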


Here we use a similar idea and generalize the result to $(n,\beta)$-graphs. By Proposition~\ref{prop:ex}, every $(n,d,\lambda)$-graph is an $(n,\lambda/d)$-graph, and thus Theorem~\ref{thm:gu} is a corollary of the following theorem.
\begin{theorem}\label{tough-main}
Let $G$ be a connected $(n,\beta)$-graph. Then $\displaystyle t(G)\ge \frac{1-\beta}{\beta}.$
\end{theorem}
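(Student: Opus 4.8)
The plan is to reduce the toughness bound to the key separation inequality \eqref{ssiz} of Lemma~\ref{22l}, exactly as we did for the matching results. Since $G$ is a connected $(n,\beta)$-graph that is noncomplete (otherwise $t(G)=\infty$ and there is nothing to prove), I first take an arbitrary proper subset $S\subset V(G)$ for which $G-S$ is disconnected, say with $c=c(G-S)\ge 2$ components. The goal is to show $|S|/c\ge (1-\beta)/\beta$ for every such $S$; taking the minimum over all $S$ then yields $t(G)\ge (1-\beta)/\beta$.

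\textbf{The main step.} Let $D_1,\dots,D_c$ be the vertex sets of the components of $G-S$, ordered so that $|D_1|\le\cdots\le|D_c|$. Following the construction in Lemma~\ref{L23l}, I partition $V(G)-S$ into two disjoint nonempty sets $X$ and $Y$ with no edge between them and with $|X|\le |Y|$. Because $G$ is an $(n,\beta)$-graph, the pair $(X,Y)$ satisfies the hypothesis of Lemma~\ref{22l}, so \eqref{ssiz} gives $|S|\ge\frac{1-\beta}{\beta}|X|$. The crux is therefore to choose the partition so that $|X|\ge c$, which would immediately give $|S|\ge\frac{1-\beta}{\beta}c$ and hence $|S|/c\ge(1-\beta)/\beta$. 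This is where the hypothesis that $G$ is an \emph{$(n,\beta)$-graph} (rather than merely weakly one) is essential: because we may take $X$ and $Y$ to be non-disjoint, or equivalently because the single-vertex bound $c\le \beta n/(1+\beta)$ of Lemma~\ref{L23l}(ii) is available, there is enough room to split the components into two blocks each containing at least $c$ vertices. Concretely, I would argue that one can always group the $D_i$ into two classes $X,Y$ with $e(X,Y)=0$ (no two distinct components are adjacent) and $\min\{|X|,|Y|\}\ge c$; the partition result quoted from \cite{Gu21b} in the proof of Lemma~\ref{L23l}(ii), which guarantees a split into $X,Y$ with $|Y|\ge|X|\ge c$ whenever $\sum_{i=1}^{c-1}|V_i|\ge c$, is exactly the tool for this.

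\textbf{Handling the small case.} The partition argument above requires $|V(G)-S|$ to be reasonably large; the delicate case is when $V(G)-S$ is small, in particular when most components are single vertices and $\sum_{i=1}^{c-1}|D_i|<c$. When every $D_i$ is a single vertex (so $|V(G)-S|=c$), I instead take $U$ to be one vertex from each component as in Lemma~\ref{L23l}(ii), obtaining an independent set of size $c$; applying the $(n,\beta)$-definition to a suitable disjoint pair, or invoking the sharper estimate $|S|>\frac{2(c-1)}{\beta(1+\beta)}\ge\frac{c}{\beta(1+\beta)}$ from Lemma~\ref{L23l}(ii), yields a lower bound on $|S|/c$ that I must check is still at least $(1-\beta)/\beta$. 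Verifying that the various sub-bounds all dominate $(1-\beta)/\beta$ for the relevant ranges of $\beta$ is the routine-but-necessary bookkeeping; I expect $0<\beta<1$ throughout, since for $\beta\ge 1$ the claimed bound is nonpositive and trivial.

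\textbf{Main obstacle.} The real difficulty is guaranteeing the clean inequality $|X|\ge c$ (equivalently $\min\{|X|,|Y|\}\ge c$) uniformly over all $S$, including the edge cases with many isolated components. This is precisely the combinatorial partitioning lemma borrowed from \cite{Gu21b}, and the whole proof hinges on its applicability; once $|X|\ge c$ is in hand, the toughness bound falls out of \eqref{ssiz} in a single line. I would present the proof by first disposing of the complete and $\beta\ge1$ cases, then splitting into the ``large remainder'' case (where the \cite{Gu21b} partition gives $|X|\ge c$ directly) and the ``small remainder'' case (handled via the independent-set/volume estimate), and finally combining to conclude $t(G)\ge(1-\beta)/\beta$.
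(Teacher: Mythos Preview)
Your proposal is correct and follows essentially the same route as the paper: dispose of $\beta\ge 1$, then for a cut set $S$ with $c=c(G-S)\ge 2$ either obtain a partition $X,Y$ of $V(G)-S$ with $|X|\ge c$ via the result of~\cite{Gu21b} and apply~\eqref{ssiz}, or in the degenerate case invoke the sharper bound $|S|>c/(\beta(1+\beta))>(1-\beta)c/\beta$ from Lemma~\ref{L23l}(ii). One small clarification on the case split: the paper's ``small remainder'' threshold is $|V(G)-S|\le 2\beta n/(1+\beta)$, handled directly by combining $c\le\beta n/(1+\beta)$ with $|S|\ge(1-\beta)n/(1+\beta)$; the sharper Lemma~\ref{L23l}(ii) estimate you cite actually \emph{requires} $|V(G)-S|\ge 2\beta n/(1+\beta)$ and is invoked only in the intermediate case where $D_1,\dots,D_{c-1}$ (not necessarily $D_c$) are singletons.
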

\begin{proof}
We can assume that $\beta <1$, for otherwise it is trivially true. Suppose to the contrary that
\begin{equation}\label{eq:contr}
t(G) < \frac{1-\beta}{\beta}.
\end{equation}
By definition, suppose that $S$ is a subset of $V(G)$ such that $\frac{|S|}{c(G-S)}=t(G)$. Let $B = V(G-S)$. Denote $c(G-S)=c$ and $t(G)=t$. Then $|S| = tc$ and so $|B| = n - tc$.

%First we show that
%\begin{equation}\label{eq:c-upp}
%c\le \frac{\beta n}{1 + \beta}.
%\end{equation}
%To see this, let $U$ be a set of vertices that consists of exactly one vertex from each component of $G-S$. Then $|U|=c$. Let $X=Y=U$ and so there is no edge between $X$ and $Y$. By definition, $|U|^2\le \beta^2 (n-|U|)^2$, and thus $|U|\le \beta (n-|U|)$, which implies that $c=|U|\le\frac{\beta n}{1 + \beta}$.\\
By Lemma~\ref{L23l}, $$c\le \frac{\beta n}{1 + \beta}.$$

If $|B|\le \frac{2\beta n}{1 +\beta}$, then $|S| = n-|B|\ge\frac{(1-\beta)n}{1 +\beta}$, and so
$$t= |S|/c\ge \frac{(1-\beta)n}{1 +\beta}\cdot \frac{1+\beta}{\beta n}=\frac{1-\beta}{\beta},$$
contrary to~\eqref{eq:contr}. Thus, we may assume that
\begin{equation*}%\label{eq:B-lowb}
|B| > \frac{2\beta n}{1 + \beta},
\end{equation*}
and thus $|B|\ge 2c+1$.
%By~\eqref{eq:c-upp} and~\eqref{eq:B-lowb}, we have $|B| > 2c$, that is
%\begin{equation}\label{eq:bvsc}
%|B|\ge 2c+1.
%\end{equation}

Let $V_1,V_2,\ldots, V_c$ denote the vertex sets of the $c$ components of $G-S$. Without loss of generality, we may assume that $|V_1|\le |V_2|\le \cdots\le |V_c|$. If $\sum_{i=1}^{c-1} |V_i| = c-1$, then each $V_i$ is a single vertex for $i=1,2,\ldots, c-1$. By Lemma~\ref{L23l}, $$t=\frac{|S|}{c}>\frac{1}{\beta(1+\beta)} >\frac{1-\beta}{\beta},$$
contrary to \eqref{eq:contr}. Thus we may assume that $\sum_{i=1}^{c-1} |V_i|\ge c$.

%\par\medskip
%\noindent
%{\bf Claim 1:} $\sum_{i=1}^{c-1} |V_i|\ge c$.
%\\Proof of Claim 1: Otherwise, $\sum_{i=1}^{c-1} |V_i| = c-1$, that is, each $V_i$ is a single vertex for $i=1,2,\ldots, c-1$. \textcolor{red}{By Lemma \ref{L23l},} $$t=\frac{|S|}{c}>\frac{1}{\beta(1+\beta)} >\frac{1-\beta}{\beta},$$ contrary to \eqref{eq:contr}. This completes the proof of Claim 1.
%Let $X=\bigcup_{i=1}^{c-1} V_i$ and $Y=B$. Thus $|X|=c-1$ and no edges between $X, Y$. By definition of $(n, \beta)$-graphs,
%$$|X||Y|\le \beta^2 (n-|X|)(n-|Y|) < \beta^2 n(n-|Y|) =\beta^2 n|S|,$$
%which implies that
%$$|S|> \frac{|X||Y|}{\beta^2 n}.$$

%Thus
%$$t =\frac{|S|}{c}> \frac{|X|}{c}\cdot \frac{1}{\beta^2}\cdot \frac{|Y|}{n} = \frac{c-1}{c}\cdot \frac{1}{\beta^2}\cdot \frac{|Y|}{n}
%= \left(1-\frac{1}{c} \right) \cdot \frac{1}{\beta^2}\cdot \frac{|Y|}{n}\ge \frac{1}{2\beta^2}\cdot \frac{|Y|}{n}.$$

%By~\eqref{eq:B-lowb}, $\frac{|B|}{n} > \frac{2\beta}{1 + \beta}$ and we have

%$$t> \frac{1}{2\beta^2}\cdot \frac{2\beta}{1 + \beta}=\frac{1}{\beta(1+\beta)} >\frac{1-\beta}{\beta},$$

\medskip
For $\sum_{i=1}^{c-1} |V_i|\ge c$, it was proved in \cite{Gu21b} that when $|B|\ge 2c+1$, $V_1,V_2,\ldots, V_c$ can be partitioned into two disjoint sets $X$ and $Y$ such that $e(X, Y)=0$, and  $|Y|\ge |X|\ge c$.

By the definition of $(n, \beta)$-graphs and Lemma \ref{22l},
$$t=\frac{|S|}{c}\ge\frac{(1-\beta)|X|}{c\beta} \ge\frac{1-\beta}{\beta},$$
completing the proof.
\end{proof}

%\begin{equation}\label{eq:xy}
%|X||Y|\le \beta^2 (n-|X|)(n-|Y|).
%\end{equation}

%Without loss of generality, we may assume that $|X|\le |Y|$. By~\eqref{eq:xy}, we have
%$$ |X|^2 \le |X|\cdot |Y| \le \beta^2(n-|X|)(n-|Y|)\le \beta^2(n-|X|)^2,$$
%that is, $\displaystyle |X| \le \beta (n - |X|)$,
%and hence
%\begin{equation}\label{eq:x-bound}
%|X| \le \frac{\beta n}{1+\beta}.
%\end{equation}

%Also, since $|Y| = n - |S| - |X|$, by \eqref{eq:xy} again, we have
%$$ |X|(n -|S| -|X|) = |X|\cdot |Y| \le\beta^2(n -|X|)(n -|Y|) =\beta^2(n -|X|)(|S| + |X|),$$
%which implies that
%\begin{equation}
%\label{xnxn}
%|X|n \le \left(\beta^2(n - %|X|) + |X| \right) \left(|S| %+ |X| \right)
%= \left(\beta^2 n + (1-\beta^2)|X| \right) \left(|S| + |X| \right).
%\end{equation}

%By (\ref{eq:x-bound}), we have
%$$(1 -\beta^2)|X| \le (1 -\beta^2)\cdot \frac{\beta n}{1+\beta}=(1 -\beta)\beta n,$$
%plugging in (\ref{xnxn}),
%$$|X|n \le \left(\beta^2 n + %(1 -\beta)\beta n\right) %\left(|S| + |X| \right) = %\beta n \left(|S| + |X| \right),$$
%and we have $\displaystyle |X| \le \beta \left(|S| + |X| \right)$. Hence
%

Most recently, Haemers~\cite{Haem20} conjectured that $t(G)\ge \frac{\mu_2}{\mu_n -\delta}$ for any graph $G$, and a partial result has been obtained in~\cite{GH21} that $t(G)\ge \frac{\mu_2}{\mu_n - \mu_2}$. By Proposition~\ref{prop:ex}, if $2\delta \ge \mu_2 +\mu_n$ for a graph $G$, then $G$ is an $(n,\beta)$-graph, where $\beta = \frac{\mu_n - \mu_2}{\mu_n + \mu_2}$. Thus we have the following corollary.
\begin{corollary}
If $2\delta \ge \mu_2 +\mu_n$ for a connected graph $G$, then
$\displaystyle t(G)\ge \frac{2\mu_2}{\mu_n - \mu_2}$.
\end{corollary}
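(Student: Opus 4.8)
The plan is to derive this corollary directly from the two principal tools already established, Proposition~\ref{prop:ex}(iii) and Theorem~\ref{tough-main}, together with a short algebraic simplification. Before applying them I would dispose of the trivial case: if $G$ is complete, then $t(G)=\infty$ by convention and the inequality holds vacuously, so I may assume $G$ is noncomplete. For a connected noncomplete graph one has $0<\mu_2<\mu_n$, so that $\beta:=\frac{\mu_n-\mu_2}{\mu_n+\mu_2}$ is well-defined and satisfies $0<\beta<1$; this is precisely the regime in which the forthcoming bounds are meaningful.

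First I would invoke Proposition~\ref{prop:ex}(iii): the hypothesis $2\delta\ge\mu_2+\mu_n$ guarantees that $G$ is an $(n,\beta)$-graph with $\beta=\frac{\mu_n-\mu_2}{\mu_n+\mu_2}$. Since $G$ is connected, Theorem~\ref{tough-main} then applies and yields $t(G)\ge\frac{1-\beta}{\beta}$.

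It remains only to rewrite the right-hand side for this particular $\beta$. Putting everything over the common denominator $\mu_n+\mu_2$ gives $1-\beta=\frac{2\mu_2}{\mu_n+\mu_2}$, whence
\[
\frac{1-\beta}{\beta}=\frac{2\mu_2/(\mu_n+\mu_2)}{(\mu_n-\mu_2)/(\mu_n+\mu_2)}=\frac{2\mu_2}{\mu_n-\mu_2},
\]
which is exactly the claimed bound. I do not expect any genuine obstacle here: all the structural work is carried out inside Theorem~\ref{tough-main}, and the only points requiring a moment's care are confirming that $\beta$ is strictly positive (handled by excluding the complete graph, where $\mu_2=\mu_n$) and performing the substitution correctly.
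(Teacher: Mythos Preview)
Your proposal is correct and follows exactly the paper's own argument: apply Proposition~\ref{prop:ex}(iii) to see that $G$ is an $(n,\beta)$-graph with $\beta=\frac{\mu_n-\mu_2}{\mu_n+\mu_2}$, then invoke Theorem~\ref{tough-main} and simplify $\frac{1-\beta}{\beta}$. Your added care in disposing of the complete-graph case and verifying $0<\beta<1$ is a welcome refinement, but the route is identical.
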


By Proposition~\ref{prop:ex} again, every graph with no isolated vertices is an $(n,\beta)$-graph, where $\beta=\sigma\Delta/\delta$.
%$\displaystyle \beta=\sigma\cdot\frac{\Delta}{\delta}$.
We have the following lower bound on toughness via normalized Laplacian eigenvalue $\sigma$. Notice that $\sigma = \lambda/d$ for $d$-regular graphs and thus the following corollary extends
Theorem~\ref{thm:gu} to general graphs.
\begin{corollary}
For any connected graph $G$, $\displaystyle t(G)\ge\frac{\delta}{\sigma\Delta}-1$.
\end{corollary}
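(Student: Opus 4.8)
The plan is to derive this corollary directly from the combination of Proposition~\ref{prop:ex}(iv) and Theorem~\ref{tough-main}, so the work is essentially bookkeeping rather than a new argument. First I would dispose of the degenerate cases: if $G$ is complete then $t(G)=\infty$ and the inequality holds trivially, so I may assume $G$ is a connected noncomplete graph on $n\ge 2$ vertices. Such a graph contains no isolated vertex, since connectivity forces every vertex to have a neighbor. This is exactly the hypothesis needed to invoke Proposition~\ref{prop:ex}(iv).

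Next I would apply Proposition~\ref{prop:ex}(iv) with the choice $\beta=\sigma\Delta/\delta$, which certifies that $G$ is an $(n,\beta)$-graph for this value of $\beta$. Since $G$ is connected, Theorem~\ref{tough-main} then yields $t(G)\ge \frac{1-\beta}{\beta}$. Substituting and simplifying,
$$\frac{1-\beta}{\beta}=\frac{1-\sigma\Delta/\delta}{\sigma\Delta/\delta}=\frac{\delta-\sigma\Delta}{\sigma\Delta}=\frac{\delta}{\sigma\Delta}-1,$$
which is the desired bound.

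I would also check the remaining boundary case $\beta\ge 1$ (equivalently $\sigma\Delta\ge\delta$): here the right-hand side $\frac{\delta}{\sigma\Delta}-1\le 0\le t(G)$, so the inequality is automatic, consistent with the fact that Theorem~\ref{tough-main} is only informative when $\beta<1$. There is no real obstacle in this proof; the only points requiring care are confirming that connectivity supplies the ``no isolated vertices'' hypothesis of Proposition~\ref{prop:ex}(iv) and that the algebraic simplification of $\frac{1-\beta}{\beta}$ matches the stated form. As a sanity check, for a $d$-regular graph one has $\delta=\Delta=d$ and $\sigma=\lambda/d$, so the bound collapses to $t(G)\ge d/\lambda-1$, recovering Theorem~\ref{thm:gu}.
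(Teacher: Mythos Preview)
Your proposal is correct and follows essentially the same route as the paper: the paper derives the corollary directly from Proposition~\ref{prop:ex}(iv) (giving that $G$ is an $(n,\sigma\Delta/\delta)$-graph) together with Theorem~\ref{tough-main}. Your additional care with the complete-graph case, the $\beta\ge 1$ boundary, and the regular-graph sanity check is not strictly needed but is entirely appropriate.
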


Many graph properties are related to toughness, and we refer readers to the survey~\cite{BaBS06} for more toughness related results. Using similar ideas as~\cite{GH21}, we have various applications on, for instances, matching extensions, factors, $k$-walk, and spanning trees of bounded maximum degree, etc. These results can be easily obtained when needed, and thus are omitted here.

\medskip

Here is a variation of toughness introduced by Enomoto~\cite{Enom98} to study $k$-factors. Let $t'(G)$ be the minimum of $\frac{|S|}{c(G-S)-1}$ taken over all proper subsets $S\subset V(G)$ such that $G-S$ is disconnected. Clearly $t'(G) > t(G)$. We have a quick proof of the following result for weakly $(n,\beta)$-graphs.

\begin{theorem}\label{weakly-tough}
For a connected weakly $(n,\beta)$-graph $G$, $\displaystyle t'(G) >\frac{1-\beta}{2\beta}$ and $\displaystyle t(G) >\frac{5(1-\beta)}{11\beta}$.  Furthermore, if $n$ is large enough, then $\displaystyle t(G) >\left(\frac{1}{2}-\varepsilon\right)\frac{1-\beta}{\beta}$ for any $\varepsilon >0$.
\end{theorem}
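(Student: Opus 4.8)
The plan is to assume $0<\beta<1$ (for $\beta\ge 1$ all three bounds are non-positive and hold trivially) and to bound the ratio $\frac{|S|}{c}$ (respectively $\frac{|S|}{c-1}$) from below for \emph{every} vertex subset $S$ with $G-S$ disconnected, writing $\gamma=\frac{1-\beta}{\beta}$ and $c=c(G-S)\ge 2$. Since $G$ is connected, such an $S$ is nonempty, so $|S|\ge 1$ and the $c$ components occupy $|V(G)-S|=n-|S|\ge c$ vertices. The bound on $t'$ is then immediate from Lemma~\ref{L23l}(i): it gives $|S|>\frac{c-1}{2}\gamma$, hence $\frac{|S|}{c-1}>\gamma/2$ for every such $S$, and minimizing over $S$ yields $t'(G)>\gamma/2$.

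For $t(G)$ I would fix an arbitrary separating $S$ and split on whether $V(G)-S$ contains a union of components $X$ with $c/2\le|X|\le|V(G)-S|-|X|$. If $|V(G)-S|\ge c+1$, the partition result of~\cite{GL22} yields disjoint $X,Y$ with $X\cup Y=V(G)-S$, $e(X,Y)=0$ and $|Y|\ge|X|\ge c/2$; if $|V(G)-S|=c$ (all components are isolated vertices) with $c$ even, an equal split of these $c$ vertices does the same with $|X|=c/2$. In either case Lemma~\ref{22l} gives $|S|\ge\gamma|X|\ge\gamma c/2$, so $\frac{|S|}{c}\ge\gamma/2>\tfrac{5}{11}\gamma$.

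The only remaining, and genuinely delicate, configuration is $|V(G)-S|=c$ with $c=2a+1$ odd, i.e.\ $G-S$ is an independent set of $c$ vertices and $|S|=n-c$. Its most balanced bipartition has parts of sizes $a$ and $a+1$, so the weakly $(n,\beta)$ inequality applied to it gives $\beta\ge\sqrt{\frac{a(a+1)}{(n-a)(n-a-1)}}$. Rewriting the goal $\frac{n-c}{c}>\tfrac{5}{11}\gamma$ equivalently as $\beta>\frac{5(2a+1)}{11n-12a-6}$, it suffices that this square-root lower bound already exceeds the right-hand side, i.e.\ that the quadratic
\[
P(n)=a(a+1)(11n-12a-6)^2-25(2a+1)^2(n-a)(n-a-1)
\]
is positive. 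The crucial point is the algebraic identity $P(2a+1)=0$ (because $11(2a+1)-12a-6=5(2a+1)$), so $n=c$ is always a root of this upward parabola; a short computation shows its other root is strictly smaller than $c+1$, and connectivity forces $n=|S|+c\ge c+1$, whence $P(n)>0$ and $t(G)>\tfrac{5}{11}\gamma$. Minimizing over $S$ finishes the bound on $t(G)$.

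For the asymptotic statement I would fix $\varepsilon>0$ and again estimate $\frac{|S|}{c}$ for each separating $S$. If $c>\frac{1}{2\varepsilon}$, then Lemma~\ref{L23l}(i) already gives $\frac{|S|}{c}>\frac{c-1}{2c}\gamma>(\tfrac12-\varepsilon)\gamma$. If $c\le\frac{1}{2\varepsilon}$ and a balanced separation as above exists, then $\frac{|S|}{c}\ge\gamma/2$; and if $c\le\frac{1}{2\varepsilon}$, $c$ is odd and $|V(G)-S|=c$, then $\frac{|S|}{c}=\frac{n-c}{c}$ grows without bound in $n$ while $c$ stays bounded, so it exceeds $(\tfrac12-\varepsilon)\gamma$ once $n$ passes a threshold depending only on $\varepsilon$ and $\beta$. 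Hence for all large $n$ every separating $S$ gives ratio above $(\tfrac12-\varepsilon)\gamma$, so $t(G)>(\tfrac12-\varepsilon)\gamma$. I expect the main obstacle to be exactly this odd all-singleton case: the continuous form of the weakly inequality there yields a ratio strictly below $\gamma/2$, which is why the uniform constant cannot be $1/2$, and pinning down the explicit $5/11$ relies on the coincidence $P(c)=0$ together with the integrality input $n\ge c+1$, whereas every other case reduces cleanly to the two separation lemmas.
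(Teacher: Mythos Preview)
Your proposal is correct and follows essentially the same route as the paper: the $t'$ bound comes straight from Lemma~\ref{L23l}(i), the $t$ bound splits into the ``balanced-partition'' case (giving $\gamma/2$) and the odd all-singleton case, and the asymptotic statement is handled by a large-$c$/small-$c$ dichotomy. Your treatment of the odd all-singleton case is in fact the same computation as the paper's---the paper also uses the factor $(n-c)$ of the quadratic (which is exactly your observation $P(c)=0$) to reduce to the linear inequality $(21c^2-121)n\le 11c(c^2-1)$ and then checks this fails for $n\ge c+1$; and for the asymptotic part your use of Lemma~\ref{L23l}(i) when $c>1/(2\varepsilon)$ is a slightly cleaner variant of the paper's direct application of \eqref{ssiz} to the $(\lfloor c/2\rfloor,\lceil c/2\rceil)$ split.
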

\begin{proof}
By definition, suppose that $S$ is a subset of $V(G)$ such that $\frac{|S|}{c(G-S)}=t(G)$ or $\frac{|S|}{c(G-S)-1}=t'(G)$. Denote $c(G-S)=c$. By Lemma~\ref{L23l}, $$\text{ $|S|>\frac{(c-1)(1-\beta)}{2\beta}$, and so $t'(G)=\frac{|S|}{c-1}> \frac{1-\beta}{2\beta}$.}$$

To show the lower bounds on $t(G)$, let $V_1,V_2,\ldots, V_c$ be the vertex sets of the $c$ components of $G-S$. Without loss of generality, we may assume that $|V_1|\le |V_2|\le \cdots\le |V_c|$. Clearly we may suppose that $0<\beta<1$. In the proof of Lemma~\ref{L23l}, it is proved that if either $|V(G)-S|\ge c+1$ or $c$ is even, then  $V_1,V_2,\ldots, V_c$ can be partitioned into two disjoint parts $X$ and $Y$ such that $e(X,Y)=0$ and $|Y|\ge |X|\ge \frac{c}{2}$. By \eqref{ssiz},
$$t(G)=\frac{|S|}{c} \ge \frac{1 -\beta}{\beta}\cdot  \frac{|X|}{c}\ge \frac{1 -\beta}{2\beta}.$$
%>\frac{5(1 -\beta)}{11\beta}.$$
Therefore it suffices to assume that $c$ is odd and $|V(G)-S|=c\ge 3$ in the following.

If $t(G)=\frac{|S|}{c}=\frac{n-c}{c}\le  \frac{5(1-\beta)}{11\beta}$, then $\beta\le \frac{5c}{11n-6c}$. Let $X=\bigcup_{j=1}^{\lfloor c/2\rfloor} V_j$ and $Y=V(G)-S-X$. By \eqref{Liu3e}, $$\frac{c^2-1}{(2n+1-c)(2n-c-1)}=\frac{|X|\cdot |Y|}{(n-|X|)(n-|Y|}\le \beta^2\le \left(\frac{5c}{11n-6c}\right)^2,$$
which implies that $(21c^2 - 121)n -11 c(c^2 - 1)\le 0$, contrary to the fact that $n\ge c+1\ge 4$.

It remains to prove that $t(G) >\left(\frac{1}{2}-\varepsilon\right)\frac{1-\beta}{\beta}$ when $n$ is large enough. Let $X=\bigcup_{j=1}^{\lfloor c/2\rfloor} V_j$ and $Y=V(G)-S-X$. Then $|Y|=\frac{c+1}{2} > |X| =\frac{c-1}{2}\ge \left(\frac{1}{2}-\varepsilon\right)c$ when $c\ge \frac{1}{2\varepsilon}$. By \eqref{ssiz} again,
$$t(G)=\frac{|S|}{c} > \frac{1 -\beta}{\beta}\cdot \frac{|X|}{c}\ge \left(\frac{1}{2}-\varepsilon\right)\frac{1-\beta}{\beta}.$$
If $c <\frac{1}{2\varepsilon}$, then $$t(G)=\frac{|S|}{c}=\frac{n-c}{c}=\frac{n}{c}-1 >2\varepsilon n -1 \ge \left(\frac{1}{2}-\varepsilon\right)\frac{1-\beta}{\beta}$$
when $2\varepsilon n\ge \left(\frac{1}{2}-\varepsilon\right)\frac{1-\beta}{\beta} +1$.
This completes the proof.
\end{proof}
\begin{remark}
 In the above proof, the requirement $2\varepsilon n\ge \left(\frac{1}{2}-\varepsilon\right)\frac{1-\beta}{\beta} +1$ is not optimized. It probably can be improved with a little more effort, but we did not pursue it. The same idea also works to prove some specific bound like $t(G) > \frac{k}{2k+1}\cdot \frac{1 -\beta}{\beta}$ for larger $n$. For instance,  it is true that $t(G) >\frac{6(1-\beta)}{13\beta}$ when $n\ge 6$.
\end{remark}

%Let $X=\bigcup_{j=1}^{\lfloor c/2\rfloor} V_j$ and $Y=V(G)-S-X$. When $c$ is even, $|Y|\ge |X|\ge \frac{c}{2} > \frac{c}{3}$. \red{Otherwise,  $c$ is odd. If $c\ge 5$, then  $|Y|  \ge  |X|\ge \frac{c-1}{2}\ge \frac{2c}{5}$. By \eqref{ssiz}, $|S| \ge \frac{1 -\beta}{\beta} |X|>\frac{1 -\beta}{\beta}\cdot \frac{2c}{5}$, where the equalities do not hold simultaneously, since either $|X| > 2c/5$ or $|Y| >|X|$. This implies that $t(G)=\frac{|S|}{c} > \frac{2(1-\beta)}{5\beta}$. If $c=3$ and $t(G)=\frac{|S|}{c}\le  \frac{2(1-\beta)}{5\beta}$, then $\beta\le \frac{2c}{5n-3c}$}

%\red{With similar argument as in Theorem \ref{weakly-tough}, we can  slightly  improve the bound $\frac{5(1-\beta)}{11\beta}$ to $\frac{r(1-\beta)}{\beta}$ for $r>\frac{5}{11}$. For instance, we can show that $r=\frac{6}{13}$ for $n\ge 6$,  but by the same argument we always have $r<\frac{1}{2}$.}

To end this subsection, we mention another variation of toughness. The {\bf scattering number $s(G)$} is defined by Jung~\cite{Jung78} in the ``additive dual'' sense of toughness that $s(G) = \max\left\{c(G-S)-|S|\right\}$ taken over all proper subsets $S\subset V(G)$ such that $G-S$ is disconnected. A graph is an $H$-free graph if it does not contain $H$ as an induced subgraph. The scattering number is closely related to disjoint paths, Hamilton paths and Hamilton cycles in $P_4$-free graphs  which are also called $D^*$-graphs in~\cite{Jung78}. Lemma~\ref{L23l} implies that $s(G)\le \max\left\{\frac{2\beta-1}{\beta+1}n, 0\right\}$ for $(n,\beta)$-graphs and $s(G)\le \max\left\{\frac{(3\beta-1)n +2(1-\beta)}{\beta+1},0\right\}$ for weakly $(n,\beta)$-graphs. %These also imply corresponding spectral results.

\subsection{Hamilton cycle}
A graph is {\bf Hamiltonian} if it contains a Hamilton cycle. The following conjecture of Krivelevich and Sudakov~\cite{KrSu03} is well known for pseudorandom graphs. A stronger conjecture was then made by the first author via Laplacian eigenvalues.

\begin{conjecture}[Krivelevich and Sudakov~\cite{KrSu03}]\label{conj:ks}
For any $(n,d,\lambda)$-graph $G$ with large enough $n$, there exists a constant $K>0$ such that if  $d/\lambda > K$, then $G$ is Hamiltonian.
\end{conjecture}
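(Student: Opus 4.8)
The plan is to push the toughness machinery of Section~\ref{sect:toughness} as far as it will go and then see what is still missing. Since every $(n,d,\lambda)$-graph is an $(n,\lambda/d)$-graph by Proposition~\ref{prop:ex}, Theorem~\ref{tough-main} gives $t(G)\ge \frac{d}{\lambda}-1$, so the hypothesis $d/\lambda>K$ forces $t(G)>K-1$. Two further structural facts drop out of the defining inequality \eqref{Liu3e} with $\beta=\lambda/d$: taking $X=Y$ an independent set yields $\alpha(G)\le \frac{\beta n}{1+\beta}$, i.e.\ $\alpha(G)=O(\lambda n/d)$; and the standard inequality $\kappa(G)\ge 2t(G)$ gives vertex connectivity $\kappa(G)\ge 2\big(\tfrac{d}{\lambda}-1\big)$. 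Thus the first step recasts the conjecture as: a graph that is simultaneously highly tough, highly connected, and has small independence number is Hamiltonian.

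A natural second step is to invoke the Chv\'atal--Erd\H{o}s theorem, which guarantees a Hamilton cycle as soon as $\kappa(G)\ge \alpha(G)$. Substituting the two bounds above, this holds once $2\big(\tfrac{d}{\lambda}-1\big)\ge \tfrac{\lambda n}{d+\lambda}$, which reduces to a condition of the shape $d\gtrsim \lambda\sqrt{n}$. This already proves Hamiltonicity of $(n,d,\lambda)$-graphs in the regime $d/\lambda=\Omega(\sqrt n)$, and I would record this as an unconditional corollary. It does \emph{not}, however, reach the conjecture, whose hypothesis is only the constant-ratio condition $d/\lambda>K$ with $K$ independent of $n$.

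To attack the constant-ratio regime I would abandon toughness as the sole driver and instead extract quantitative expansion directly from \eqref{Liu3e}: a vertex set $X$ with no edges to $V(G)\setminus(X\cup N(X))$ forces that non-neighbourhood to be small, which translates into a lower bound $|N(X)\setminus X|\ge \rho\,|X|$ for all $X$ up to linear size, with $\rho=\rho(\beta)$. I would then run a P\'osa rotation--extension argument on a longest path: expansion makes the set of path endpoints reachable by rotations grow, one closes a long cycle, and the high connectivity and toughness are used to absorb the few leftover vertices, iterating until the cycle spans $V(G)$.

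The hard part, and the reason this remains a conjecture, is exactly the passage to an \emph{absolute} constant $K$. Toughness alone is not known to force Hamiltonicity (this is Chv\'atal's still-open toughness conjecture), so one truly needs the expansion route; yet the expansion that a \emph{constant} ratio $d/\lambda$ supplies through \eqref{Liu3e} is, as far as current techniques go, too weak to carry the rotation--extension argument to a spanning cycle, these methods being known to succeed only when $\lambda$ is smaller than $d$ by a growing (polylogarithmic) factor. Bridging the gap between that sublinear-$\lambda$ regime and the constant-ratio regime is the genuine obstruction, and I would not expect the toughness reduction by itself to close it.
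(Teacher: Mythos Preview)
The statement you were asked to prove is a \emph{conjecture}, not a theorem: the paper does not offer any proof of it, and indeed presents it as open. Your proposal is therefore not being compared against a paper proof, because none exists. You have correctly recognised this in your final paragraph, noting that the constant-ratio regime $d/\lambda>K$ is beyond the reach of toughness bounds (which would require Chv\'atal's toughness conjecture) and of rotation--extension arguments (which currently need $\lambda$ smaller than $d$ by a growing factor). That assessment is accurate and matches the state of the art; the paper itself goes no further than observing that Chv\'atal's toughness conjecture, via Theorem~\ref{weakly-tough}, would imply the still stronger Conjecture~\ref{conj:beta} and hence Conjecture~\ref{conj:ks}.

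One minor correction to your write-up: the inequality relating vertex connectivity and toughness runs the other way, $t(G)\ge \kappa(G)/2$ is false in general; what holds is $\kappa(G)\ge t(G)$ is also not right---the correct relation is $t(G)\le \kappa(G)/2$ for non-complete graphs, so from $t(G)\ge d/\lambda-1$ you get $\kappa(G)\ge 2t(G)\ge 2(d/\lambda-1)$ only because $\kappa(G)\ge 2t(G)$ \emph{does} hold (deleting a cut of size $\kappa$ leaves at least two components). So your use of $\kappa(G)\ge 2t(G)$ is in fact correct; disregard this caveat. The Chv\'atal--Erd\H{o}s step and the resulting $d\gtrsim\lambda\sqrt{n}$ regime are a valid unconditional corollary, consistent with known results, but as you say they fall well short of the conjecture.
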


\begin{conjecture}[Gu, Conjecture~5.11 in~\cite{GH21}]\label{conj:gu}
There exists a positive constant $C<1$ such that if $\mu_2/ \mu_n \ge C$ and $n\ge 3$ {\em(}or large enough $n${\em)}, then $G$ is Hamiltonian.
\end{conjecture}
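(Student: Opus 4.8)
The plan is to read the spectral hypothesis through the combinatorial lens developed above. By Proposition~\ref{prop:ex}(ii), every graph is a weakly $(n,\beta)$-graph with $\beta = \frac{\mu_n-\mu_2}{\mu_n+\mu_2}$, so the assumption $\mu_2/\mu_n \ge C$ translates into $\beta \le \frac{1-C}{1+C}$, which is small precisely when $C$ is close to $1$. The first step is to record what ``small $\beta$'' already buys. By Theorem~\ref{weakly-tough}, a connected weakly $(n,\beta)$-graph satisfies $t(G) > \frac{5(1-\beta)}{11\beta}$, and essentially $\frac{1-\beta}{2\beta}$ for large $n$, so $t(G) \to \infty$ as $C \to 1$; in particular $G$ is highly connected and admits no sparse vertex cut. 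Combined with the Laplacian expander mixing lemma (Theorem~\ref{thm:fc2}), which forces near-uniform edge distribution between every pair of vertex sets, this places $G$ squarely in the pseudorandom regime, so the target is really the Laplacian incarnation of Conjecture~\ref{conj:ks}.

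The second step is to extract the quantitative expansion needed to run a rotation--extension argument. I would first establish that $G$ is nearly regular in this regime: from $\mu_2 \le \delta$ (Fiedler) and the standard bound $\mu_n \ge \Delta+1$ one gets $\delta \ge \mu_2 \ge C\mu_n \ge C(\Delta+1)$, whence $\Delta/\delta \le 1/C$ tends to $1$. Near-regularity lets the edge-distribution control of Theorem~\ref{thm:fc2} be converted cleanly into degree and neighbourhood bounds; applying it with $X$ a set satisfying $e(X, V(G)\setminus(X\cup N(X)))=0$ yields a vertex-expansion statement of the form $|N(X)| \ge (1+c(\beta))|X|$ for every not-too-large $X$, with $c(\beta)$ growing as $\beta \to 0$.

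The third step is the Hamiltonicity argument itself. With boundary expansion and high connectivity in hand, the natural route is Pósa's rotation--extension technique: take a longest path, use the expansion of the set of endpoints reachable by rotations to create many candidate endpoints, and then either close a cycle or extend, exactly as in the Krivelevich--Sudakov treatment of $(n,d,\lambda)$-graphs; a DFS/absorption hybrid that builds an almost-spanning cycle and then absorbs the remaining vertices using the uniform edge distribution is a plausible alternative.

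The hard part is precisely this last step, and it is genuinely hard. Proving Hamiltonicity for weakly $(n,\beta)$-graphs with $\beta$ below a universal threshold would in particular settle a strong Laplacian form of Conjecture~\ref{conj:ks}, which is itself open. The toughness bound of Theorem~\ref{weakly-tough}, while supplying the correct necessary condition (every Hamiltonian graph is $1$-tough) and a compelling heuristic, cannot be used as a black box, since whether large toughness alone forces Hamiltonicity is the still-open conjecture of Chvátal discussed in Section~\ref{sect:toughness}. Two further technical obstacles remain: making the constant $C$ uniform in $n$, so that the ``$n\ge 3$'' rather than the ``large enough $n$'' version holds, which requires treating small and possibly irregular graphs separately; and controlling the non-regular part, since the cleanest expansion estimates rely on near-regularity and the mixing bounds degrade when $\Delta/\delta$ is bounded away from $1$.
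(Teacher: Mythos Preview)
This statement is a \emph{conjecture}, not a theorem; the paper does not prove it and offers no proof to compare against. What the paper does is exactly what you do in your first paragraph: observe via Proposition~\ref{prop:ex}(ii) that the hypothesis $\mu_2/\mu_n\ge C$ places $G$ among the weakly $(n,\beta)$-graphs with small $\beta$, note that Conjecture~\ref{conj:beta} would imply Conjecture~\ref{conj:gu}, and record that Chv\'atal's toughness conjecture together with Theorem~\ref{weakly-tough} would settle Conjecture~\ref{conj:beta}. The only unconditional results the paper states are for graph classes where Chv\'atal's conjecture is already known ($P_4$-free, planar, chordal, $2K_2$-free, etc.).

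Your proposal is therefore not a proof, and to your credit you say so: the third step---closing a Hamilton cycle via rotation--extension from the expansion you extract---is precisely the open part, equivalent in strength to the Laplacian form of Conjecture~\ref{conj:ks}. The toughness lower bound cannot be used as a black box for the reason you state, and the near-regularity and expansion estimates you outline, while correct as far as they go, do not by themselves push the P\'osa argument through without the missing combinatorial ingredient. So the gap is real and acknowledged on both sides: neither your proposal nor the paper contains a proof, and none is currently known.
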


Now we may make a similar conjecture for weakly $(n,\beta)$-graphs as below. Note that any $(n,\beta)$-graph is also a weakly $(n,\beta)$-graph. By Proposition~\ref{prop:ex}, Conjecture~\ref{conj:beta} is stronger than both Conjectures~\ref{conj:ks} and~\ref{conj:gu}.
\begin{conjecture}\label{conj:beta}
There exists a constant $\beta >0$ such that every weakly $(n,\beta)$-graph with $n\ge 3$ {\em(}or large enough $n${\em)} is Hamiltonian.
\end{conjecture}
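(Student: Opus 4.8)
The plan is to run the P\'osa rotation--extension technique, feeding it the two structural facts that the separation inequality~\eqref{Liu3e} provides with no reference to edge density. First, taking $X=S$ and $Y=V(G)\setminus(S\cup N(S))$ in~\eqref{Liu3e} bounds the external neighbourhood $\Gamma(S)=N(S)\setminus S$ of every $S$ from below:
\[
|\Gamma(S)|\ \ge\ \frac{|S|\,(n-|S|)(1-\beta^2)}{\beta^2(n-|S|)+|S|}.
\]
The right-hand side is decreasing in $|S|$, so the expansion ratio $|\Gamma(S)|/|S|$ attains its minimum on $\{|S|\le k\}$, where $k=\lfloor \beta n/(1+\beta)\rfloor+1$, at the value $\tfrac{1-\beta}{\beta}$; hence $|\Gamma(S)|>2|S|$ for all $|S|\le k$ exactly when $\beta<\tfrac13$. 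Second, the monotonicity of $\tfrac{xy}{(n-x)(n-y)}$ together with~\eqref{Liu3e} shows that any two disjoint sets of size at least $k$ are joined by an edge. Finally, a weakly $(n,\beta)$-graph with $\beta<1$ is connected, and a short computation from Lemma~\ref{L23l} shows it is highly connected for small $\beta$. The unbalanced complete bipartite graph $K_{m,m+1}$ is a weakly $(n,\beta)$-graph with $\beta=\tfrac{m+1}{3m+1}\to\tfrac13$ and is not Hamiltonian, so any theorem of this shape must require $\beta$ strictly below $\tfrac13$; this is the natural target.

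With these ingredients I would argue as usual. Let $P$ be a longest path and let $R$ be the set of endpoints reachable from one end of $P$ by a sequence of rotations. P\'osa's lemma bounds $|N(R)\setminus R|\le 2|R|$, so the factor-$2$ external expansion valid up to size $k$ forces $|R|>k$; since $k=\Theta(n)$, the rotation endpoint set is already a linear fraction of $V(G)$, and the edge-between-large-sets property applies to it. The intended conclusion is to locate a crossing edge that closes a cycle $C$ on $V(P)$: as $P$ is longest, its endpoints have no neighbours outside $V(P)$, so a non-spanning $C$ could be extended to a longer path through any edge from $C$ to the rest of the connected graph $G$, contradicting maximality; thus $V(P)=V(G)$ and $C$ is a Hamilton cycle.

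The crux, and the reason this is stated as a conjecture, is the cycle-closing step. Performing a second rotation at the other end produces, for each $w\in R$, a large set $R_w$ of path-partners of $w$ on $V(P)$; to close a cycle one needs a single edge $ww'$ with $w'\in R_w$, whereas the large-set property~\eqref{Liu3e} only guarantees an edge between $R$ and $R_w$ joining \emph{some} pair, which need not be the ends of a common path. Turning the $\Theta(n^2)$ path-connected endpoint pairs into one closing edge requires counting how many such pairs are edges, and the separation inequality carries no lower bound on edge counts at all---unlike the expander mixing lemma, whose main term $\tfrac{d}{n}|X||Y|$ is exactly what such counting needs. This is precisely the obstruction that leaves Conjecture~\ref{conj:ks} open for a constant ratio $d/\lambda$: since every $(n,d,\lambda)$-graph is weakly $(n,\lambda/d)$, proving Conjecture~\ref{conj:beta} for any constant $\beta$ would in particular settle Conjecture~\ref{conj:ks}. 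I would therefore expect progress to come not from pushing rotation--extension further but from an absorption argument---assembling an absorbing structure out of the strong small-set expansion and covering the remainder via the large-set edge property---or from a genuinely new way to exploit the toughness bound of Theorem~\ref{weakly-tough} beyond what Chv\'atal-type sufficient conditions are known to give.
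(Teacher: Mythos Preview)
This statement is a \emph{conjecture}, and the paper does not prove it. The paper's entire treatment after stating Conjecture~\ref{conj:beta} is to observe (via Theorem~\ref{weakly-tough}) that Chv\'atal's toughness conjecture would imply it, and to list graph classes (e.g.\ $P_4$-free, chordal, $2K_2$-free) where Chv\'atal's conjecture is known and hence Conjecture~\ref{conj:beta} holds. There is no attempted direct proof to compare against.

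Your write-up is likewise not a proof, and you are explicit about this. What you supply instead is a careful and accurate diagnosis of why the natural P\'osa rotation--extension attack stalls: the separation inequality~\eqref{Liu3e} yields genuine small-set expansion (your bound on $|\Gamma(S)|$ is correct, and the threshold $\beta<\tfrac13$ for factor-$2$ expansion is right) and an edge between any two disjoint linear-sized sets, but it carries no lower bound on edge \emph{counts}, so one cannot match a rotation endpoint $w$ to a partner in its own set $R_w$. This is exactly the obstruction that keeps Conjecture~\ref{conj:ks} open for constant $d/\lambda$, and your remark that a proof of Conjecture~\ref{conj:beta} would settle Conjecture~\ref{conj:ks} is correct. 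Your $K_{m,m+1}$ example, giving $\beta=(m+1)/(3m+1)\to\tfrac13$, is a useful sharpness observation not present in the paper. In short: there is no gap to name because you have not claimed a proof; your discussion is sound and in fact complements the paper's, which situates the conjecture \emph{downstream} of Chv\'atal, while you situate it \emph{upstream} of Krivelevich--Sudakov.
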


Chv\'atal~\cite{Chva73} conjectured that there exists a constant $t_0$ such that every $t_0$-tough graph is Hamiltonian. By Theorem~\ref{weakly-tough}, it is not hard to see that Chv\'atal's conjecture implies Conjecture~\ref{conj:beta}. Chv\'atal's conjecture is still open, however, it has been verified for many graph families, especially for graphs with various forbidden subgraphs.  For instance, it was proved by Jung~\cite{Jung78} that any $P_4$-free graph $G$ with at least $3$ vertices is Hamiltonian if $t(G)\ge 1$. By Theorem~\ref{tough-main}, this implies that Conjecture~\ref{conj:beta} is true with $\beta\le 1/2$ for $P_4$-free graphs. It has also been shown that Chv\'atal's conjecture is true for planar graphs, chordal graphs, $2K_2$-free graphs, $(P_2\cup P_3)$-free graphs, $(P_2\cup 3P_1)$-free graphs, $(P_3\cup 2P_1)$-free graphs, etc., where $P_n$ is a path on $n$ vertices. Thus Conjectures~\ref{conj:ks},~\ref{conj:gu} and~\ref{conj:beta} are true for these families of graphs.

%\begin{proposition}\label{021p} If $\mu_n(G)\ge 2\delta$ and $\frac{\mu_{2}}{\mu_n}\ge \frac{1}{2}$, then $G$ is Hamiltonian.\end{proposition}
%\begin{proof} We may suppose that $\delta<\frac{n}{2}$ and so $G\ne K_n$. This implies that $$\frac{\delta}{\mu_n}\ge \frac{\mu_{2}}{\mu_n}\ge \frac{1}{2},$$ and thus $\delta=\mu_2$ and $\mu_n=2\delta$. From Theorem 2.1 of, $\delta=\mu_2$ if and only if $G = G_1\vee  (K_1\cup G_2)$, where $G_2$ is a graph on $n -\delta- 1$ vertices. Thus, $n=\mu_n=2\delta$, a contradiction.  \end{proof}

%\begin{corollary} If $G$ is a bipartite graph with $\frac{\mu_{2}}{\mu_n}\ge \frac{1}{2}$, then $G$ is Hamiltonian.\end{corollary}
%\begin{proof} Since $G$ is a bipartite graph, $\mu_n(G)=q_n(G)\ge 2\delta$, where $q_n(G)$ is the signless Laplacian spectral radius of $G$. Now, the result follows from Proposition \ref{021p}.     \end{proof}

%\red{In Theorem 4.4 of \cite{Jung78}, it has been shown that any $P_4$-free graph  $G$ is Hamiltonian  if and only if $s(G)\le 0$ and $n\ge 3$. Thus, we can see that if $G$ is a $P_4$-free  weakly  $(n,\beta)$-graph with  $0<\beta\le \frac{n-2}{3n-2}$ and $n\ge 3$, then $G$ is Hamiltonian. By setting $\beta=\frac{\mu_n-\mu_2}{\mu_n+\mu_2}$, we can conclude that for any $P_4$-free graph   $G$, if  $\frac{\mu_2}{\mu_n}\ge \frac{n}{2(n-1)}$  and $n\ge 3$, then $G$ is Hamiltonian. With the similar reason, we can also deduce that any $P_4$-free $(n,\beta)$-graph $G$ satisfying $0<\beta\le 1/2$ and $n\ge 3$ must be Hamiltonian. This gives partial answer to Conjectures \ref{conj:gu} and \ref{conj:beta}.}

\section{A bipartite analogue}\label{sect:bipart}

Motivated by the bipartite analogue of pseudorandom graphs by Thomason~\cite{Thom89}, we have the following bipartite analogue of $(n,\beta)$-graphs.

A $(U, W)$-bipartite graph $G$ is called a {\bf $(U, W, \beta)$-bipartite graph if }
$$\frac{|X||Y|}{(|U|-|X|)(|W|-|Y|)}\le \beta^2$$ holds for every pair of vertex subsets $X\subseteq U$ and $Y\subseteq W$ such that $e(X, Y)=0$.
Clearly a $(U,W)$-bipartite graph  is a $(U,W,0)$-bipartite graph if and only if it is a complete bipartite graph.

Here is an example.
It is proved by Butler~\cite[Lemma~38]{Bu08} that $$\frac{\vol(X)\vol(Y)}{\vol(U\setminus X) \vol(W\setminus Y)}\le (1-\sigma_2)^2$$
for any connected $(U, W)$-bipartite graph and every pair of vertex subsets $X\subseteq U$ and $Y\subseteq W$ such that $e(X, Y)=0$. Let $\delta_1, \delta_2$ be the minimum degree of vertices in $U$ and $W$, respectively. Similarly, let $\Delta_1$ and $\Delta_2$ be the maximum degree of $U$ and $W$, respectively. We have $\delta_1 |X|\le \vol(X)$, $\delta_2 |Y|\le \vol(Y)$, $\vol(U\setminus X)\le\Delta_1 (|U|-|X|)$ and $\vol(W\setminus Y)\le\Delta_2 (|W|-|Y|)$, which implies that
$$\frac{|X||Y|}{(|U|-|X|)(|W|-|Y|)}\le \left((1-\sigma_2)\cdot \sqrt{\frac{\Delta_1\Delta_2}{\delta_1\delta_2}}\right)^2
\le \left((1-\sigma_2)\cdot\frac{\Delta}{\delta}\right)^2.$$
Thus any connected $(U,W)$-bipartite graph is a $(U, W, \beta)$-bipartite graph for $\beta\ge (1-\sigma_2)\cdot \sqrt{\frac{\Delta_1\Delta_2}{\delta_1\delta_2}}$.

\begin{theorem}
Let  $G$ be a connected $(U, W, \beta)$-bipartite graph with $|W|\ge t|U|$. If $t\ge 1$ and $\beta>0$, then $$\alpha'(G)\ge \min\left\{t/\beta^2,1\right\}\cdot |U|.$$
\end{theorem}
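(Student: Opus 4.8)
The strategy is to mimic the proof of Theorem~\ref{main4}, but using Ore's formula (Theorem~\ref{hall}) together with the defining inequality of $(U,W,\beta)$-bipartite graphs rather than the $(n,\beta)$-graph inequality. Set $r=\min\{t/\beta^2,1\}$, and write $|U|=n_1$, $|W|=n_2$, so that $n_2\ge tn_1$. By Theorem~\ref{hall} it suffices to show that for every $S\subseteq U$,
$$|S|-|N(S)|\le (1-r)n_1.$$
This is immediate when $|S|\le (1-r)n_1$ or when $|S|\le|N(S)|$ (the latter because $r\le 1$), so I would assume both $|S|>(1-r)n_1$ and $|S|>|N(S)|$ and argue by contradiction, supposing some $S\subseteq U$ has $|S|-|N(S)|>(1-r)n_1$.

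First I would set up the contradiction hypothesis with $x=|S|$ and $y=|N(S)|$, so $x-y>(1-r)n_1$ and $y<x$. The crucial choice of test sets is $X=S\subseteq U$ and $Y=W\setminus N(S)\subseteq W$: by the definition of $N(S)$ there is no edge between $S$ and $W\setminus N(S)$, so $e(X,Y)=0$ and the defining inequality applies. This gives
$$\frac{x\,(n_2-y)}{(n_1-x)\,n_2}\le\beta^2,$$
since $|Y|=n_2-y$ and $|W|-|Y|=y$. Because $y<x\le n_1$ and $t\ge 1$ force the factor $(n_2-y)/n_2$ to be bounded below, I would extract a linear lower bound on $y$ of the form $y\gtrsim x/\beta^2$ (up to the density correction coming from $n_2\ge tn_1$), mirroring the step $y>x/\beta^2$ in Theorem~\ref{main4} but now carrying the extra factor $t$ that reflects the size imbalance $|W|\ge t|U|$.

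Feeding this lower bound on $y$ back into $x-y>(1-r)n_1$ should produce $r>t/\beta^2$, contradicting $r\le t/\beta^2$. The main obstacle I anticipate is the bookkeeping in the density estimate: unlike Theorem~\ref{main4}, here $U$ and $W$ have different sizes, so I must track the factor $t$ carefully and verify that the worst case for the inequality $\frac{x(n_2-y)}{(n_1-x)n_2}\le\beta^2$ occurs precisely when $x$ is as large as possible (namely $x\le n_1$) and $n_2$ is as small as permitted (namely $n_2=tn_1$). Handling these extremal substitutions correctly, and confirming that they yield exactly the threshold $t/\beta^2$ rather than a weaker constant, is the delicate part; once the algebra is arranged so that the density correction is absorbed into the factor $t$, the contradiction follows directly.
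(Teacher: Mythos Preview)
Your overall strategy is exactly the paper's, but there is a slip that propagates into your bounding step. You correctly compute $|W|-|Y|=y$, yet the displayed inequality should then read
\[
\frac{x(n_2-y)}{(n_1-x)\,y}\le\beta^2,
\]
with $y$ in the denominator, not $n_2$. This is not just cosmetic: with $n_2$ in place of $y$ the inequality is strictly weaker (since $y<n_2$), and your proposed route of bounding the factor $(n_2-y)/n_2$ from below gives only $(n_2-y)/n_2\ge 1-1/t$, which is useless at $t=1$ and in any case does not yield $y>tx/\beta^2$.

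The correct comparison, and the one the paper uses, is $(n_2-y)/(n_1-x)>t$: from $y<x$ and $n_2\ge tn_1$ with $t\ge1$ one has $n_2-y>n_2-x\ge tn_1-x\ge t(n_1-x)$. Plugging this into the correct inequality gives $\tfrac{tx}{y}<\tfrac{x(n_2-y)}{(n_1-x)y}\le\beta^2$, hence $y>tx/\beta^2$. Then $0\le(1-r)n_1<x-y<(1-t/\beta^2)x\le(1-t/\beta^2)n_1$ forces $r>t/\beta^2$, the desired contradiction. Once you fix the denominator and replace the $(n_2-y)/n_2$ heuristic with this estimate, your argument coincides with the paper's.
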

\begin{proof}
Let $r=\min\left\{t/\beta^2,1\right\}$, $|U|=n_1$ and $|W|=n_2$. Then  $n_2\ge t n_1$. By Theorem~\ref{hall}, it suffices to show that for every $S\subseteq U$, $|S|-|N(S)|\le (1-r)n_1$.

This is trivial if either $|S|\le (1-r)n_1$ or $|S|\le |N(S)|$ since $r\le 1$. Thus we may assume that $|S|>(1-r)n_1$ and $|S|>|N(S)|$.

By contradiction, we suppose that there exists an $S\subseteq U$ such that $|S|-|N(S)| > (1-r)n_1$. Let $|S|=x$ and $|N(S)|=y$, and so $x-y> (1-r)n_1$. Choose $X=S$ and $Y=W\setminus N(S)$. Then there is no edge between $X$ and $Y$. By definition, we have
$$\frac{x(n_2-y)}{(n_1-x)y}=\frac{|X||Y|}{(n_1-|X|)(n_2-|Y|)}\le \beta^2.$$
Since $y<x$ and $n_2\ge tn_1$, it follows that
$$\frac{tx}{y}<\frac{x(n_2-y)}{(n_1-x)y}\le \beta^2,$$
and so $y> tx/\beta^2$.

Since $r\le 1$, we have $$0\le (1-r)n_1<x-y< x -\frac{tx}{\beta^2} = \left(1 -\frac{t}{\beta^2}\right)x\le \left(1 -\frac{t}{\beta^2}\right)n_1,$$
which implies that $r> t/\beta^2$, a contradiction. This completes the proof.
\end{proof}

\begin{corollary}
Let $G$ be a connected  $(U, W)$-bipartite graph with $|W|\ge t|U|$. If  $t\ge 1$, then
$$\alpha'(G)\ge \min\left\{
\frac{1}{(1-\sigma_2)^2}\cdot \frac{\delta_1\delta_2}{\Delta_1\Delta_2}\cdot t,
\, \, 1\right\}\cdot |U|.$$
\end{corollary}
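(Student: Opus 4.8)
The plan is to reduce the corollary to the preceding theorem by exhibiting a valid value of $\beta$. The key observation, already established in the excerpt via Butler's inequality, is that any connected $(U,W)$-bipartite graph is a $(U,W,\beta)$-bipartite graph for every $\beta \ge (1-\sigma_2)\sqrt{\frac{\Delta_1\Delta_2}{\delta_1\delta_2}}$. So the whole argument amounts to substituting this $\beta$ into the bound $\alpha'(G)\ge \min\{t/\beta^2,1\}\cdot |U|$ from the theorem and simplifying.

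Concretely, I would first set $\beta = (1-\sigma_2)\sqrt{\frac{\Delta_1\Delta_2}{\delta_1\delta_2}}$, which is the smallest admissible value and therefore gives the strongest bound (since $t/\beta^2$ is decreasing in $\beta$, the smaller the $\beta$ we may legitimately use, the larger the guaranteed matching number). With this choice we compute
$$\frac{t}{\beta^2} = \frac{t}{(1-\sigma_2)^2 \cdot \frac{\Delta_1\Delta_2}{\delta_1\delta_2}} = \frac{1}{(1-\sigma_2)^2}\cdot \frac{\delta_1\delta_2}{\Delta_1\Delta_2}\cdot t,$$
which is exactly the first argument of the minimum appearing in the corollary. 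Then the theorem applies verbatim, since $G$ is a connected $(U,W,\beta)$-bipartite graph with $|W|\ge t|U|$ and $t\ge 1$, yielding $\alpha'(G)\ge \min\{t/\beta^2,1\}\cdot |U|$, which is the claimed inequality after the substitution.

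I would also pause to confirm the hypotheses of the theorem are genuinely met: we need $\beta>0$, which holds because a connected bipartite graph that is not complete bipartite has $\sigma_2<1$, so $1-\sigma_2>0$ and all the degree quantities are positive (connectivity forces $\delta_1,\delta_2\ge 1$); and in the complete bipartite case one can treat it as the limiting $\beta\to 0$ situation where the bound becomes $\min\{+\infty,1\}\cdot|U| = |U|$, consistent with a complete bipartite graph saturating $U$. There is essentially no analytic obstacle here, as all the real work was done in proving the theorem and in deriving Butler's degree-weighted inequality earlier in the section.

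The only mildly delicate point — the step I would watch most carefully — is the direction of the monotonicity argument when selecting $\beta$. One must be sure that using a \emph{larger} admissible $\beta$ would only \emph{weaken} the conclusion, so that choosing the minimal $\beta = (1-\sigma_2)\sqrt{\Delta_1\Delta_2/(\delta_1\delta_2)}$ is what produces the stated (and best possible from this method) lower bound. Since the map $\beta \mapsto t/\beta^2$ is strictly decreasing on $(0,\infty)$ and the outer minimum with $1$ preserves this monotonicity, the minimal admissible $\beta$ is the correct choice. Beyond that, the proof is a one-line appeal to the theorem, so I expect no substantive difficulty.
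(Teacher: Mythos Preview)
Your proposal is correct and is exactly the approach the paper intends: the corollary is stated without proof immediately after the theorem, as a direct substitution of the value $\beta=(1-\sigma_2)\sqrt{\Delta_1\Delta_2/(\delta_1\delta_2)}$ established via Butler's inequality a few lines earlier. Your handling of the degenerate complete-bipartite case ($\sigma_2=1$) is a reasonable extra care that the paper does not spell out.
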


\par\medskip
\noindent
{\bf Acknowledgements.}
\\The authors would like to thank Dr. Shaowei Sun for discussing normalized Laplacian eigenvalues. Gu is partially supported by a grant from the Simons Foundation (522728), and Liu is partially supported by
  Natural Science Foundation of Guangdong Province (No. 2022A1515011786).

%%%%%%%%%%%%%%


\begin{thebibliography}{99}
%\bibitem{BoMu08}
%J. A. Bondy and U. S. R. Murty, {\it Graph Theory}, Springer, New York, 2008.

\bibitem{AFG19}
S. Akbari, G.H. Fath-Tabar and E. Ghasemian, An inequality using perfect matchings and Laplacian spread of a graph, Linear Multilinear Algebra 67 (2019), 442--447.

\bibitem{Alon95} N. Alon, Tough Ramsey graphs without short cycles, J. Algebraic Combin. 4 (1995), 189--195.

\bibitem{AlCh88}
N. Alon and F.R.K. Chung, Explicit construction of linear sized tolerant networks, Discrete Math. 72 (1988), 15--19.

%\bibitem{AlSp16} N. Alon and J. H. Spencer, The Probabilistic Method, Wiley, 2016.

%\bibitem{AkKa11}J. Akiyama and M. Kano, Factors and factorizations of graphs, proof techniques in factor theory, springer, 2011.

%\bibitem{Amah85}A. Amahashi, On factors with all degrees odd, Graphs Combin. 1 (1985), 111-114.

%\bibitem{AK82}A. Amahashi and M. Kano, On factors with given components, Discrete Math. 42 (1982), 1--6.

\bibitem{BaBS06} D. Bauer, H. Broersma and E. Schmeichel, Toughness of graphs -- a survey, Graphs Combin. 22 (2006), 1--35.

\bibitem{BaVS95}
D. Bauer, J. van den Heuvel and E. Schmeichel, Toughness and triangle-free graphs, J. Combin. Theory Ser. B 65 (1995), 208--221.

\bibitem{Berge58} C. Berge, Sur le couplage maximum d'un graphe, C.R. Acad. Sci. Paris 247 (1958), 258--259.

\bibitem{Brou95} A.E. Brouwer, Toughness and spectrum of a graph, Linear Algebra Appl. 226/228 (1995), 267--271.

\bibitem{Brou96} A.E. Brouwer, Spectrum and connectivity of graphs, CWI Quarterly 9 (1996), 37--40.

\bibitem{BrHa05}
A.E. Brouwer and W.H. Haemers, Eigenvalues and perfect matchings, Linear Algebra Appl. 395 (2005), 155--162.

\bibitem{Bu08}
S.K. Butler, Eigenvalues and Structures of Graphs, University of California, San Diego, PhD Thesis, 2008.

%\bibitem{BrHa12} A. E. Brouwer and W. H. Haemers, {\it Spectra of Graphs}, Springer Universitext, 2012.

%\bibitem{CKO16}I. Choi, J. Kim and S. O, The difference and ratio of the fractional matching number and the matching number of graphs, Discrete Math. 339 (2016), 1382--1386.

\bibitem{Chun92}
F.R.K. Chung, Spectral Graph Theory, American Mathematical Society, 1992.

\bibitem{Chun04}
F.R.K. Chung, Discrete isoperimetric inequalities, Surveys in Differential Geometry, Vol. IX, International Press, Somerville, (2004), 53--82.

\bibitem{Chva73}
V. Chv\'atal, Tough graphs and hamiltonian circuits, Discrete Math. 5 (1973), 215--228.

\bibitem{Cioa05} S.M. Cioab\u{a}, Perfect matchings, eigenvalues and expansion, C. R. Math. Acad. Sci. R. Can. 27 (2005), 101--104.

\bibitem{CiGr07} S.M. Cioab\u{a} and D.A. Gregory, Large matchings from eigenvalues, Linear Algebra Appl. 422 (2007), 308--317.

\bibitem{CiGH09} S.M. Cioab\u{a}, D.A. Gregory and W.H. Haemers, Matchings in regular graphs from eigenvalues, J. Combin. Theory Ser. B, 99 (2009), 287--297.

\bibitem{CiGu16} S.M. Cioab\u{a} and X. Gu, Connectivity, toughness, spanning trees of bounded degrees, and spectrum of regular graphs, Czechoslovak Math. J. 66 (2016), 913--924.

\bibitem{CiWo14} S.M. Cioab\u{a} and W. Wong, The spectrum and toughness of regular graphs, Discrete Appl. Math. 176 (2014), 43--52.

\bibitem{CP83}
G. Cornu\'ejols and W.R. Pulleyblank, Critical graphs, matchings and tours or a hierarchy of relaxations for the travelling salesman problem. Combinatorica, 3(1) (1983), 35--52.

\bibitem{Enom98}
H. Enomoto, Toughness and the existence of $k$-factors. III, Discrete Math. 189 (1998), 277--282.

%\bibitem{EJKS85}H. Enomoto, B. Jackson, P. Katerinis, A. Saito, Toughness and the existence of $k$-factors, J. Graph Theory 9 (1985), 87--95.

\bibitem{Fie1}M. Fiedler, Algebraic connectivity of graphs, Czechoslovak Math. J. 23 (1973), 298--305.

\bibitem{Gall63}
T. Gallai, Neuer Beweis eines Tutte'schen Satzes, Magyar Tud. Akad. Mat. Kut. Int. K\"ozl. 8 (1963), 135--139.

\bibitem{GoNe08}C.D. Godsil  and M.W. Newman, Eigenvalue bounds for independent sets. J. Combin. Theory Ser. B 98 (2008), 721--734.

%\bibitem{Gu14}X. Gu, Regular factors and eigenvalues of regular graphs, European J. Combin. 42 (2014), 15--25.

\bibitem{Gu21}
X. Gu, Toughness in pseudo-random graphs, European J. Combin. 92 (2021), 103255.

\bibitem{Gu21b}
X. Gu, A proof of Brouwer's toughness conjecture,  SIAM J. Discrete Math. 35 (2021), 948--952.

\bibitem{GH21}
X. Gu and W.H. Haemers, Graph toughness from Laplacian eigenvalues, Algebraic Combin. 5 (2022), 53--61.

\bibitem{GL22}
X. Gu and M. Liu, A tight lower bound on matching number of graphs via Laplacian eigenvalues, European J. Combin. 101 (2022), 103468.

\bibitem{Haem79} W.H. Haemers, Eigenvalue techniques in design and graph theory, PhD thesis, 1979.

\bibitem{Haem95} W.H. Haemers, Interlacing eigenvalues and graphs, Linear Algebra Appl. 226-228 (1995), 593--616.

\bibitem{Haem20} W.H.~Haemers, Toughness conjecture, \text{www.researchgate.net/publication/348437253}, 2020.

\bibitem{Hall35}
P. Hall, On representation of subsets, J. London Math. Soc 10 (1935), 26--30.

%\bibitem{HR15} J. Harant and S. Richter, A new eigenvalue bound for independent sets, Discrete Math. 338 (2015), 1763--1765.

\bibitem{Jung78} H.A. Jung, On a class of posets and the corresponding comparability graphs, J. Combin. Theory Ser. B 24 (1978), 125--133.

%\bibitem{Kate90} P. Katerinis, Toughness of graphs and the existence of factors, Discrete Math. 80 (1990), 81--92.

\bibitem{KrSu03}M. Krivelevich and B. Sudakov, Sparse pseudo-random graphs are Hamiltonian, J. Graph Theory 42 (2003), 17--33.

\bibitem{KrSu06}
M. Krivelevich and B. Sudakov, Pseudo-random graphs, More sets, graphs and numbers, Bolyai Soc. Math. Stud., 15, Springer, Berlin, 2006, 199--262.

%\bibitem{Las78}M. Las Vergnas, An extension of Tutte¡¯s 1-factor theorem, Discrete Math. 23 (1978), 241--255.

%\bibitem{Lu10}H. Lu, Regular factors of regular graphs from eigenvalues, Electron. J. Combin. 17 (2010), \#R159.

%\bibitem{Lu12}H. Lu, Regular graphs, eigenvalues and regular factors, J. Graph Theory 69 (2012), 349--355.

\bibitem{LuLT07}M. Lu, H. Liu and F. Tian, Laplacian spectral bounds for clique and independence numbers of graphs, J. Combin. Theory Ser. B 97 (2007), 726--732.

\bibitem{OCi10}S. O and S.M. Cioab\u{a}, Edge-connectivity, eigenvalues and matchings in regular graphs, SIAM J. Discrete Math. 24 (2010), 1470--1481.

\bibitem{Ore55}
O. Ore, Graphs and Matching theorems, Duke Math. J. 22 (1955), 625--639.

\bibitem{POHBWWC}
D. Park, A. Ostuni, N. Hayes, A. Banerjee, T. Wakhare, W. Wong and S. Cioab\u{a},
The toughness of Kneser graphs, Discrete Math. 334 (2021), 112484.
%arXiv:2008.08183 [math.CO].


%\bibitem{Plum07}M.D. Plummer, Graph factors and factorization: 1985¨C2003: A survey, Discrete Math. 307 (2007), 791--821.

\bibitem{SU11}
E.R. Scheinerman and D.H. Ullman,  Fractional Graph Theory: A Rational Approach to the Theory of Graphs, Dover Publications, 2011.
%\bibitem{SU97} E.R. Scheinerman, D.H. Ullman, Fractional Graph Theory: A Rational Approach to the Theory of Graphs, Wiley \& Sons, 1997.

\bibitem{Thom85}
A. Thomason, Pseudo-random graphs, in: Proceedings of Random Graphs, Pozna\'n 1985, M. Karo\'nski, ed., Annals of Discrete Math. 33 (1987), 307--331.

\bibitem{Thom87}
A. Thomason, Random graphs, strongly regular graphs and pseudo-random graphs, Surveys in Combinatorics, 1987, C. Whitehead, ed., LMS Lecture Note Series 123 (1987), 173--195.

\bibitem{Thom89}
A. Thomason, Dense expanders and pseudo-random bipartite graphs, Discrete Math. 75 (1989), 381--386.

\bibitem{Tutte47}
W.T. Tutte, The factorization of linear graphs, J. London Math. Soc. 22 (1947), 107--111.

\bibitem{XueZS}J. Xue, M.Q. Zhai and J.L. Shu, Fractional matching number and eigenvalues of a graph, Linear Multilinear Algebra 67 (2019), 2565--2574.



\end{thebibliography}
\end{document}